\tikzstyle{gvert}=[draw,circle,line width=0.5pt,minimum size=10pt,inner sep=0pt]
\tikzstyle{ivert}=[circle,line width=0.5pt,minimum size=10pt,inner sep=0pt]
\tikzstyle{edge} = [draw,>=latex,->]
\newtheoremstyle{mythm}                   
{6pt}
{6pt}
{\it}
{}
{\bf}
{.}
{.5em}
{}
\newtheoremstyle{mydef}                   
{6pt}
{6pt}
{}
{}
{\bf}
{.}
{.5em}
{}
\newtheoremstyle{myrem}                   
{6pt}
{6pt}
{}
{}
{\bf}
{.}
{.5em}
{}
\theoremstyle{mythm}      
\newtheorem{theorem}{Theorem}[section]
\newtheorem{proposition}[theorem]{Proposition}
\newtheorem{lemma}[theorem]{Lemma}
\newtheorem{corollary}[theorem]{Corollary}        
\newtheorem{conjecture}[theorem]{Conjecture} 
\theoremstyle{mydef}      
\newtheorem{example}[theorem]{Example}
\theoremstyle{myrem}
\newtheorem{remark}[theorem]{Remark}
\numberwithin{equation}{section}
\newcounter{ithmcount}
\newenvironment{iprf}{\begin{list}{{\rm
	\alph{ithmcount})}}{\usecounter{ithmcount}\labelwidth-5pt
      \leftmargin0pt \topsep3pt \itemsep1pt \parsep2pt}}{\end{list}}
\newenvironment{ithm}{\begin{list}{{\rm \alph{ithmcount})}}{\usecounter{ithmcount}\labelwidth18pt
      \leftmargin18pt \topsep3pt \itemsep1pt \parsep2pt}}{\end{list}}
\renewcommand{\leq}{\leqslant} 
\renewcommand{\geq}{\geqslant}
\newcommand{\ra}{\rightarrow}
\newcommand{\ms}{\mapsto}
\renewcommand{\split}{\rtimes}
\newcommand{\im}{{\rm{im}}}
\newcommand{\Aut}{{\rm Aut}}
\newcommand{\gal}{{\rm gal}}
\newcommand{\N}{\mathbb N}
\newcommand{\Z}{\mathbb Z}
\newcommand{\Q}{\mathbb Q}
\newcommand{\K}{\mathbb K}
\renewcommand{\L}{\mathbb L}
\renewcommand{\O}{\mathcal O}
\newcommand{\p}{\mathfrak p}
\newcommand{\newG}{\mathcal{G}}
\newcommand{\dep}{{\rm dep}}
\newcommand{\T}{\mathcal T}
\newcommand{\B}{\mathcal B}
\newcommand{\DD}{\mathcal D}
\newcommand{\R}{\mathcal R}
\renewcommand{\S}{\mathcal S}
\newcommand{\Hom}{{\rm{Hom}}}
\renewcommand{\SS}{\mathbb S}
\newcommand{\TT}{\mathbb T}
\newcommand{\II}{\mathcal I}
\newcommand{\G}{\mathcal G}
\newcommand{\U}{\mathcal U}
\newcommand{\hatn}{{\hat{n}}}
\newcommand{\defd}{d}
\newcommand{\defc}{c}
\newcommand{\defell}{\ell}
\begin{document}

\title{Galois trees in the graph of $\pmb{p}$-groups of maximal class}
\subjclass[2000]{}
\author[A. Cant]{Alexander Cant}
\address[Cant, Eick, Moede]{Institut f\"ur Analysis und Algebra, Technische Universit\"at Braunschweig, Germany}
\author[H. Dietrich]{Heiko Dietrich}
\address[Dietrich]{School of Mathematics, Monash University, Clayton VIC 3800, Australia}
\author[B. Eick]{Bettina Eick}
\author[T. Moede]{Tobias Moede}
\email{\rm a.cant@tu-bs.de, heiko.dietrich@monash.edu,  beick@tu-bs.de, t.moede@tu-bs.de}
\keywords{$p$-groups, coclass theory, coclass graphs, maximal class}
\date{\today}

\begin{abstract}
The investigation of the graph $\newG_p$ associated with the finite $p$-groups of
maximal class was initiated by Blackburn (1958) and became a deep and
interesting research topic since then. Leedham-Green \& McKay (1976--1984)
introduced skeletons of $\newG_p$, described their importance for the structural
investigation of $\newG_p$ and exhibited their relation to algebraic number theory.
Here we go one step further: we partition the skeletons into so-called
Galois trees and study their general shape. 
In the special case $p \geq 7$ and $p \equiv 5 \bmod 6$, we show that they 
have a significant impact on the periodic patterns of $\newG_p$ conjectured 
by Eick, Leedham-Green, Newman \& O'Brien (2013). In particular, we use 
Galois trees to prove a conjecture by Dietrich (2010) on these periodic patterns.
\end{abstract}

\maketitle  

\section{Introduction}
\label{intro}

\noindent 
The classification of $p$-groups of maximal class is a long-standing research
problem. It was first investigated by Blackburn \cite{Bla58} and later continued
in various publications. In a broader context, this research problem is part of 
coclass theory. The monograph by Leedham-Green \& McKay \cite{LGM02} 
provides an introduction to this research area, whereas their work \cite{LMc76, LMc78a, LMc78, LMc84}
puts particular emphasis on the maximal class case. We first introduce some notation 
in order to describe the theory and our main results.
\smallskip

\noindent {\bf The graph $\newG_p$.}
The $p$-groups of maximal class can be visualised by a graph $\newG_p$: its 
vertices correspond one-to-one to the isomorphism type representatives of 
finite $p$-groups of maximal class, and there is an edge $G \ra H$ if and 
only if $H/\gamma(H) \cong G$, where $\gamma(H)$ denotes the last 
non-trivial term in the lower central series of $H$. The graph $\newG_p$
is an infinite graph and the broad structure of $\newG_p$ is as follows: It consists of an 
isolated vertex corresponding to the cyclic group of order $p^2$, and
an infinite tree $\T_p$ with root corresponding to the elementary abelian 
group of order $p^2$. The tree $\T_p$ has a unique infinite path $S_p(2)
\ra S_p(3) \ra \cdots$ starting at its root. Each group $S_p(n)$ has
order $p^n$ and is isomorphic to a lower central series quotient
of the infinite pro-$p$-group of maximal~class. The graph $\newG_p$ was fully 
determined for $p \leq 3$ by Blackburn \cite{Bla58}, and it is experimentally 
well investigated for $p=5$, see \cite{DEF08,New90,Die10}. For $p \geq 7$ its 
detailed structure is still widely unknown and our results will be focused on that case.  
\smallskip

\noindent {\bf Branches.}
A group $H$ is a {\em descendant} of $G$
in $\T_p$ if there is a path from $G$ to $H$ in $\T_p$. If $H$ has distance $m$ 
to $G$, then $H$ is an {\em $m$-step descendant} of $G$; if $m=1$ then $H$ is an 
{\em immediate descendant} of $G$; we define {\em $m$-step ancestors} 
analogously. A group is {\em capable} if it has immediate descendants.
The \emph{branch} $\B_p(n)$ of $\T_p$ is the full subgraph of $\T_p$ 
consisting of all descendants of $S_p(n)$ that are not descendants of 
$S_p(n+1)$. Thus, $\T_p$ consists of the  branches $\B_p(2), \B_p(3), 
\ldots$ which are connected by its infinite path. 

The {\em depth} of a group $G$ in $\B_p(n)$ is its distance to the root 
$S_p(n)$. The {\em depth} $\dep(\B_p(n))$ is the maximum of the depths 
of groups in $\B_p(n)$. Blackburn's classification 
\cite{Bla58} implies that the depth of $\B_p(n)$ is bounded for $p \leq 3$.  
For $p \geq 5$, the depth of $\B_p(n)$ grows linearly in $n$. More 
precisely, for $p \geq 5$ Dietrich \cite{Die09} proved 
that with $c=2p-8$ for $p \geq 7$ and $c=4$ for $p=5$ the depth satisfies $n-\defc \leq \dep(\B_p(n)) \leq n+\defc-3$.

\pagebreak

\noindent {\bf Periodic patterns.}
Throughout we write $d=p-1$. A {\em pruned branch} $\B_p(n,k)$ is the subtree of $\B_p(n)$ containing all 
groups of depth at most $k$ in $\B_p(n)$. Extending the results of du Sautoy \cite{DuS01} 
and Eick \& Leedham-Green \cite{ELG08}, Dietrich \cite{Die09} proved that
there exists $n_0=n_0(p)$ such that for all $n \geq n_0$ and $c$ as defined above there exists a graph isomorphism 
\[ \iota \colon \B_p(n+\defd, n-\defc) \ra \B_p(n, n-\defc).\]
This is called the {\em first periodicity} of the graphs of $p$-groups of
maximal class. It remains to study $\B_p(n+\defd)$ at `large' depth at least $n-\defc$.
For $G$ in $\B_p(n)$ we denote by $\DD(G)$ the tree of all descendants of $G$ in
$\B_p(n)$ and we write $\DD(G,m)$ for its pruned version. Two groups $G$ and 
$H$ in $\T_p$ are {\em twins} if $\DD(G) \cong \DD(H)$. They are {\em $m$-step
twins} if $\DD(G,m) \cong \DD(H,m)$. The following conjecture is a main
open problem in the theory of $p$-groups at current and a driving force of
current research in this area, see \cite{Die10, ELNO13}.

\begin{conjecture} 
\label{conj1}
Let $p \geq 5$. There exists $n_0 = n_0(p)$ such that for all $n \geq n_0$ and each group 
$G$ at depth $n-\defc$ in $\B_p(n+\defd)$ there exists a twin $H$ at depth $n-\defc-\defd$ in 
$\B_p(n)$.
\end{conjecture}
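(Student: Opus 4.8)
The plan is to upgrade the first periodicity isomorphism $\iota\colon \B_p(n+\defd,n-\defc)\to\B_p(n,n-\defc)$, which matches the two branches only down to the pruning depth $n-\defc$, into a statement about full descendant trees. The target group $G$ sits at depth exactly $n-\defc$ in $\B_p(n+\defd)$, i.e.\ on the pruning boundary of $\iota$, so $\iota$ itself says nothing about $\DD(G)$; the twin $H$ we seek lies one period $\defd$ shallower, at depth $n-\defc-\defd$ in $\B_p(n)$. The guiding observation is that the required correspondence is \emph{diagonal}: the shift $(n,k)\mapsto(n+\defd,k+\defd)$ of branch index and depth carries the position of $H$ (branch $n$, depth $n-\defc-\defd$) exactly to that of $G$ (branch $n+\defd$, depth $n-\defc$). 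As a first consistency check, the depth bounds $n-\defc\leq\dep(\B_p(n))\leq n+\defc-3$ show that $\DD(G)$ and $\DD(H)$ can each reach the same maximal depth $2\defc-3+\defd$, so a twin relation is at least numerically possible.

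The first real step is to locate $G$ and $H$ relative to the skeleton. By the depth bounds every group of depth at least $n-\defc$ lies in the generic part of its branch, which is governed by the Leedham-Green \& McKay skeleton $\S_p$ \cite{LMc84}; its vertices carry cyclotomic Galois data and its isomorphism type is periodic across branches. I would then pass to the decomposition of $\S_p(n+\defd)$ and $\S_p(n)$ into Galois trees and show that the Galois tree $T_G$ of $\S_p(n+\defd)$ containing $G$ is the image, under the diagonal shift, of a unique Galois tree $T_H$ of $\S_p(n)$, and that $T_H$ contains a group $H$ at depth $n-\defc-\defd$. Concretely this reduces to checking that the Galois action and the uniserial module data defining the skeleton are invariant under the simultaneous shift of branch index and depth by $\defd$; it is here that the explicit shape of the Galois trees, and the hypotheses $p\geq 7$ and $p\equiv 5\bmod 6$ that pin it down, enter the argument.

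Given the identification $T_G\cong T_H$, the task is to promote it to an isomorphism $\DD(G)\cong\DD(H)$ of the entire descendant trees, including the sporadic, non-skeleton groups attached to them. I would argue recursively on depth: the immediate descendants of a skeleton group are classified by orbits of certain cohomology and automorphism data, and the plan is to prove that this classifying data is preserved by the diagonal period-$\defd$ shift at every level. Since both $\DD(G)$ and $\DD(H)$ have depth at most $2\defc-3+\defd$, an induction anchored at the Galois-tree identification would then assemble the isomorphism $\DD(G)\cong\DD(H)$ one layer at a time, so that $G$ and $H$ are twins.

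The main obstacle is exactly this passage from the skeleton to the full descendant tree. First periodicity controls only the shallow, generic part of a branch, whereas the deep groups of $\DD(G)$ and $\DD(H)$ include sporadic descendants whose number and structure are dictated by delicate arithmetic (splitting of cyclotomic primes and the fine structure of the Galois action), and it is not at all automatic that these match under the diagonal shift. I expect the decisive point to be proving that the Galois trees have a uniform, shift-invariant shape; this is precisely what $p\equiv 5\bmod 6$ secures, since it rules out the extra symmetries present when $3\mid p-1$ and keeps the Galois-tree recursion simple enough to control the sporadic descendants uniformly across one period. Without such control the induction above cannot be closed, which is why the conjecture for general $p\geq 5$ is expected to remain out of reach by this method.
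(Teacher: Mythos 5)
The statement you were asked to prove is Conjecture~\ref{conj1}: it is an \emph{open problem}, not a theorem of this paper, so there is no proof in the paper against which your attempt can be matched. What the paper actually proves (Theorem~\ref{thm_newperiod_short}, via Theorem~\ref{thm_newperiod}) is a strictly weaker approximation: for $p\geq 7$ with $p\equiv 5\bmod 6$, every \emph{skeleton} group $G$ at depth $n-\defc$ in $\B_p(n+\defd)$ has a \emph{$\defd$-step} twin, i.e.\ an isomorphism of pruned trees $\DD(G,\defd)\cong\DD(H,\defd)$ --- not the isomorphism $\DD(G)\cong\DD(H)$ of full descendant trees that the notion of twin, and hence the conjecture, requires. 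Any purported complete proof of the conjecture should therefore be treated with corresponding suspicion.

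Your proposal is not such a proof; it is a research plan in which every decisive step is deferred (``I would show'', ``the plan is to prove''), and the two steps that carry all the weight are exactly where the problem is open. First, the shift-invariance of the Galois-tree decomposition that you want to use as an anchor is only partially available: what can be proved (Theorem~\ref{thmGalprops}, and only for $p\equiv 5\bmod 6$ and depths between $e_0(p)$ and $\dep(\S_p(n))$) is periodicity of ramification levels and of root depths, not an isomorphism of Galois trees under your diagonal shift; Galois trees of general Galois order need not even be distance regular (Figure~\ref{fignondistreg}), so a ``uniform, shift-invariant shape'' cannot be assumed. Second, and decisively, your layer-by-layer induction from the skeleton to the full descendant tree has no mechanism behind it: the non-skeleton (sporadic) descendants of a skeleton group are not classified by any data known to be periodic, and controlling them over the whole depth range (up to $2\defc+\defd-3$) is precisely the obstruction that forces the paper to settle for $\defd$-step twins, where the finite window can be handled by the stabiliser periodicity \eqref{eqstab} from \cite[Theorem~5.1]{Die10}. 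Finally, even if your plan were carried out, it could not establish the statement as given: the conjecture concerns all $p\geq 5$ and all groups at depth $n-\defc$ --- including terminal groups, which are not capable and hence not skeleton groups, so your reduction ``every group of depth at least $n-\defc$ is governed by the skeleton'' already loses them --- whereas your argument is confined to skeleton groups and, as you concede in your final sentence, to $p\equiv 5\bmod 6$.
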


If this conjecture is true, then $\G_p$ is determined  by a finite subgraph and two periodicities. The available computational evidence for small primes supports Conjecture \ref{conj1}, 
but unfortunately this evidence is rather thin. If one wants to prove
Conjecture \ref{conj1}, then a construction for twins is needed. A first 
idea was to use $H = \iota(K)$ as potential twin, where $K$ is a $\defd$-step ancestor of $G$. 
This the reason why twins have been called {\em periodic parents} in earlier work. 
This idea turned out to be wrong, see Section \ref{secperiod} for details. 

\noindent {\bf Skeletons.}
The {\em skeleton} $\S_p(n)$ 
is the full subtree of $\B_p(n)$ consisting of all capable groups of depth at most $n-\defc$ in $\B_p(n)$. 
Leedham-Green \& McKay introduced and studied these groups in a more general setting under the name  {\em constructible
groups}: they described a construction of these groups based on algebraic number theory and they also translated the 
isomorphism problem of these groups to a problem in number theory; we recall some of this theory in Section \ref{antskelgroups}.

It is the central aim of this paper to exhibit new structural results for the graph $\newG_p$ and its 
skeletons, with a view towards a construction of twins. From now on, $p\geq 7$ is a prime and write 
\[\defd=p-1,\quad \defc=2p-8,\quad\text{and}\quad \defell=(p-3)/2.\]

\section{Main results}
\noindent We define the {\em Galois order} $\gal(G)$ of a $p$-group $G$ as the $p'$-factor of its 
automorphism group order. If $n \geq 3$, then $\gal(S_p(n))=\defd^2$, and $\gal(G)$ 
divides $\defd$ for every other group $G$ in $\S_p(n)$. Further, if $H$ is a 
descendant of $G$, then $\gal(H)$ divides $\gal(G)$; see Theorem \ref{isomautpr}d). We call a subtree $\R$ of$\S_p(n)$ a {\em Galois tree} with {Galois order} $h$ if $\gal(G)=h$ for
all $G$ in $\R$ and $\R$ is maximal with this property, that is, neither an ancestor of the root of $\R$ nor a descendant of a leaf of $\R$ has Galois order $h$. By
definition the Galois trees (with all possible Galois orders) form a partition of $\S_p(n)$. 

The work by Dietrich \cite[Section 6]{Die10} suggests that the investigation of Galois trees may provide a way to determine twins. Motivated by this, the Galois trees with Galois 
order $\defd$ have been fully determined by Dietrich \& Eick \cite{DEi17}: each skeleton $\S_p(n)$ consists of $\defell$ such Galois trees, with all roots at depth 
$1$ and all leaves at depth $\dep(\S_p(n))$. Furthermore, all of these trees are {\em distance regular}, that is, the number of immediate descendants of a group in such a Galois 
tree depends on the distance to its root only. The work of this paper is guided by an investigation of Galois trees for Galois orders $h\mid d$.

We start by revisiting and refining the construction of skeleton groups by Leedham-Green \& McKay. Our new construction simplifies the isomorphism problem and also allows us 
to read off information about the Galois order of the groups. Stating these new results in detail requires more notation, which is why we postpone this to  Theorem \ref{thmGFP} 
and Proposition \ref{propIsoGT}. Both of these results are key ingredients in the proofs of our three main results stated below. The first of these, Theorem \ref{skelbottom}, shows 
that the Galois trees for arbitrary Galois order also extend to the full depth of the skeleton.

\begin{theorem} \label{skelbottom}
Let $p \geq 7$ and $n \geqslant \max \{ \defc, 8 \}$. Every leaf of a Galois tree in $\S_p(n)$  has depth $\dep(\S_p(n))$.
\end{theorem}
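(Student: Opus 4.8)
The plan is to prove the contrapositive structurally: show that no Galois tree can terminate before the skeleton's full depth $\dep(\S_p(n))$ is reached. Equivalently, I want to show that if $G$ is a capable group in $\S_p(n)$ lying at some depth $k < \dep(\S_p(n))$, and $G$ is a leaf of its Galois tree (so $\gal(G) = h$ but every descendant of $G$ in $\S_p(n)$ has Galois order properly dividing $h$), then this is impossible. The natural route is to produce, for such a $G$, at least one immediate descendant $H$ in the skeleton with $\gal(H) = \gal(G)$; since Theorem \ref{isomautpr}d) already guarantees $\gal(H) \mid \gal(G)$, exhibiting equality for one descendant forces the Galois tree to extend one step deeper, and iterating this reaches the maximal depth.

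First I would invoke the refined construction of skeleton groups from Theorem \ref{thmGFP} together with the isomorphism criterion of Proposition \ref{propIsoGT}, since the abstract says both are designed precisely to let one ``read off information about the Galois order of the groups.'' The strategy is to encode each skeleton group by its combinatorial/number-theoretic data under this construction, and to track how the Galois order is computed from that data. Concretely, I expect $\gal(G)$ to be governed by the stabiliser, inside the acting group of order dividing $\defd$ (the Galois group coming from the algebraic number theory of Leedham-Green \& McKay), of the parameter describing $G$. Passing from $G$ to an immediate descendant corresponds to extending the defining data by one coordinate, and the Galois order of the descendant is the order of the subgroup of $\gal(G)$ fixing the new coordinate.

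The key step is then a counting/averaging argument. Given $G$ with $\gal(G) = h$, the group of order $h$ acts on the set of possible one-step extensions of $G$ that remain in the skeleton; the orbits of this action correspond to isomorphism types of immediate descendants, and the Galois order of a descendant equals $h$ exactly when the corresponding extension is fixed by all of $\gal(G)$. So I must show there is at least one extension fixed by the full group of order $h$ --- that is, that the fixed-point set of the $\gal(G)$-action on admissible extensions is nonempty. I would establish this by identifying a distinguished, canonically $\gal(G)$-invariant choice of extension parameter (for instance the trivial or ``central'' parameter compatible with the $S_p$-path), and checking that this choice yields a capable group still at depth $k+1 \le \dep(\S_p(n))$ whenever $k < \dep(\S_p(n))$. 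The bound $n \ge \max\{\defc, 8\}$ should enter here to guarantee enough room for the extension and that the relevant asymptotic structure of the skeleton (its depth, and the linear growth $n - \defc \le \dep(\B_p(n))$) is already in force.

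The hard part will be the fixed-point nonemptiness: ruling out the pathological possibility that every capable one-step extension of a leaf candidate $G$ lies in a strictly smaller orbit, i.e.\ has its parameter moved by some nontrivial element of $\gal(G)$. This is where the explicit structure from Theorem \ref{thmGFP} and the distance-regularity phenomenon already known for the $h = \defd$ trees (Dietrich \& Eick \cite{DEi17}) should be leveraged: I would try to show the extension set carries a $\gal(G)$-module structure whose invariants are forced to be nontrivial, perhaps by exhibiting the $S_p$-path extension as a permanent fixed point and arguing it remains capable at all depths below $\dep(\S_p(n))$. Once nonemptiness is secured at a single step, a straightforward induction on depth propagates the Galois order all the way down, completing the proof that every leaf of every Galois tree sits at depth $\dep(\S_p(n))$.
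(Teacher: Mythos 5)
Your overall strategy---use Theorem \ref{thmGFP} to parametrise $G$, exhibit a descendant with the same Galois order, and iterate down to full depth---is in essence the paper's argument, and your instinct that a ``distinguished, canonically invariant'' extension should exist is the right one. But the step you yourself flag as ``the hard part'' (nonemptiness of the invariant extensions) is never actually carried out, and the tools you propose for it would fail. Distance regularity is special to Galois order $\defd$: the paper's own Remark and Figure \ref{fignondistreg} record that it breaks down for general Galois orders, so no counting/averaging or module-invariant argument built on it can be expected to go through. Moreover, your stated criterion---``the Galois order of a descendant equals $h$ exactly when the corresponding extension is fixed by all of $\gal(G)$''---is not the actual criterion: by Theorem \ref{isomautpr}c), $h$ divides $\gal(C_{n,e+1}(z))$ if and only if $(u,\tau)(z)\equiv z\bmod \Gamma_{n+e+1}$ for \emph{some unit} $u\in\U$, a unit-twisted congruence on the whole parameter $z$ modulo the deeper lattice, not orbit-fixedness of the extension coordinate under a group of order $h$. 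As written, your induction step rests on this imprecise characterisation.

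The missing observation, which collapses the entire difficulty, is that the fixed point supplied by Theorem \ref{thmGFP} is \emph{exact}, not a congruence: $G\cong C_{n,e}(x)$ with $(1,\tau)(x)=\omega^i x$ holding in $\K^\defell$ itself, i.e.\ $x\in\Omega_{n,i}$. Consequently the \emph{same} parameter $x$ satisfies the condition of Theorem \ref{isomautpr}c) modulo $\Gamma_{n+e'}$ for \emph{every} $e'$, so for any $e\leq e'\leq n-\defc$ the group $C_{n,e'}(x)$ lies at depth $e'$ in $\S_p(n)$ (Theorem \ref{isomautpr}a), which also disposes of your capability worry), is a descendant of $G$ by construction, has Galois order divisible by $h$ (Proposition \ref{equiv2}) and dividing $\gal(G)=h$ (Theorem \ref{isomautpr}d)), hence equal to $h$. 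Taking $e'=n-\defc=\dep(\S_p(n))$ produces a member of the Galois tree at full depth below $G$ in a single step---no induction, no orbit analysis, and no ``pathological possibility'' to rule out, since your distinguished extension is simply $y=0$ (keeping the parameter $x$), whose invariance is free. This one-step argument is precisely the paper's proof of Theorem \ref{skelbottom}.
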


A Galois tree $\mathcal{R}$  has \emph{ramification level} $e$ if there is a group in $\mathcal{R}$ at depth $e$ in $\S_p(n)$ that has more than one immediate descendant in $\mathcal{R}$. 
Our second main result Theorem \ref{thmGalprops} shows that for $p\equiv 5 \bmod 6$ the ramification levels and depths of roots of Galois trees occur periodically with periodicity $d$. We prove Theorem
\ref{thmGalprops} in Sections \ref{isorev} and \ref{galroots}. The distinction between $p \equiv 1 \bmod 6$ and $p \equiv 5 \bmod 6$ dates back to work of Leedham-Green \& McKay and, in particular,
the assumption $p \equiv 5 \bmod 6$ implies a key property of certain stabilisers, see \cite[Theorem 5.1]{Die10} and Section \ref{secperiod}. 

\begin{theorem} \label{thmGalprops}
Let $p \geq 7$ with $p \equiv 5 \bmod 6$ and $n \geqslant \max \{ \defc, 8 \}$. Let $\R$ be a Galois tree in $\S_p(n)$ with Galois order $h$. There exists $e_0=e_0(p)$ such that for 
all $e_0 \leq e < \dep(\S_p(n))$ the following hold:
\begin{ithm}  
\item If $G$ is a group at depth $e$ in $\R$, then the number of immediate descendants of $G$ in $\R$ is a power of $p$.
\item Let $e+\defd<\dep(\S_p(n))$. Then $\R$ has a ramification level $e$ if and only if it has a ramification level $e+\defd$. 
 \item Let $e+\defd<\dep(\S_p(n))$. If the root of $\R$ has depth $e$ in $\S_p(n)$, then there is a Galois tree with Galois order $h$ and root at depth $e+\defd$ in $\S_p(n)$.
\end{ithm}
\end{theorem}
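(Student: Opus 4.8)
The plan is to transfer the entire statement into the number-theoretic parametrisation of skeleton groups furnished by Theorem~\ref{thmGFP} and Proposition~\ref{propIsoGT}, and then to read off the three assertions from the action of the cyclic Galois group $\Gamma=\gal(\Q(\zeta_p)/\Q)$, which has order $\defd$. The guiding principle is that, under this parametrisation, passing from depth $k$ to depth $k+1$ in $\S_p(n)$ corresponds to lifting the defining datum through a quotient built from $\p^{k}/\p^{k+1}$, on which $\Gamma$ acts through a fixed power $\chi^{k}$ of the cyclotomic character $\chi\colon\Gamma\to\F_p^\times$. Since $\F_p^\times$ is cyclic of order $\defd$, the character $\chi^{k}$ depends only on $k\bmod\defd$; this single observation is the engine behind the periodicity in b) and c). Throughout I identify $\gal(G)$ with the order of the stabiliser $\Gamma_G\leq\Gamma$ of the datum defining $G$; because $\Gamma$ is cyclic, $\Gamma_G$ is the unique subgroup of that order, and for every $G$ in a fixed Galois tree $\R$ this subgroup is one and the same, which I denote $\Gamma_\R$.

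For a), I would realise the immediate descendants of a group $G$ at depth $e$ as the $\Aut(G)$-orbits of admissible lift data living in an $\F_p$-space $V$ carrying a $\Gamma$-action that, being $\F_p$-diagonalisable, splits into eigenspaces for powers of $\chi$. A descendant $H$ lies in $\R$ exactly when $\gal(H)=h$, that is, when its datum is fixed by the whole stabiliser $\Gamma_\R$; these data form the $\F_p$-subspace $V^{\Gamma_\R}$, the sum of the eigenspaces on which $\Gamma_\R$ acts trivially. The point of taking $e\geq e_0$ is that the $p$-part of $\Aut(G)$ then acts trivially on this subspace, so that the immediate descendants of $G$ in $\R$ are in bijection with $V^{\Gamma_\R}$ and their number equals $p^{\dim V^{\Gamma_\R}}$, a power of $p$.

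For b) and c) I would exploit the shift $e\mapsto e+\defd$. The ramification of $\R$ at level $e$ means that some group of $\R$ at depth $e$ has at least two, hence by a) at least $p$, immediate descendants in $\R$, and this is equivalent to $V_e^{\Gamma_\R}\neq 0$ for the corresponding lift space $V_e$. Since the $\Gamma$-module structure of $V_e$ is controlled by the characters $\chi^{e},\chi^{e+1},\ldots$ and $\chi$ has order $\defd$, the fixed part $V_e^{\Gamma_\R}$ is nonzero if and only if $V_{e+\defd}^{\Gamma_\R}$ is; together with Theorem~\ref{skelbottom}, which guarantees that $\R$ descends to both depths, this gives b). For c) I would argue by induction on the index $\defd/h$. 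The Galois order along $\R$ drops to $h$ at the root at depth $e$ because the ancestor's larger stabiliser fails to fix the lift datum produced by $\chi^{e-1}$; by periodicity the identical obstruction occurs at depth $e+\defd$, so a group of the appropriate larger order $h'$ at depth $e+\defd-1$ acquires an order-$h$ descendant, yielding a Galois tree of order $h$ rooted at depth $e+\defd$. The base case $h=\defd$ is the theorem of Dietrich \& Eick \cite{DEi17}, where the roots sit at depth $1$; the assumption $p\equiv5\bmod6$, equivalently $3\nmid\defd$, enters via \cite[Theorem 5.1]{Die10} to ensure that the stabilisers arising are exactly the subgroups of $\Gamma$ predicted by the characters and that they are preserved by the shift.

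I expect the principal difficulty to be the interaction of the $p$-part and the $p'$-part of $\Aut(G)$: the cyclotomic character cleanly governs the $p'$-part, and hence the Galois order, but the branching of $\R$ and the isomorphism types of descendants are controlled by the $p$-part, and one must verify that the $p$-part behaves compatibly under $e\mapsto e+\defd$ and does not disturb the bijection used in a). Equally delicate is the precise extraction, from \cite[Theorem 5.1]{Die10}, of the stabiliser property that $p\equiv5\bmod6$ supplies, together with the verification that the roots of Galois trees align closely enough for the equivalence in b) to hold near the root and not merely deep inside $\R$. It is precisely to make these transports rigorous that the refined construction of Theorem~\ref{thmGFP} and Proposition~\ref{propIsoGT} must be set up so carefully.
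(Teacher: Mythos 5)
Your overall frame (parametrise skeleton groups by number-theoretic data, exploit the mod-$\defd$ periodicity of the Galois eigenvalues on the graded pieces, and invoke \cite[Theorem 5.1]{Die10} for $p\equiv 5\bmod 6$) is the same as the paper's, but there is a genuine gap at the centre of your argument, and it is exactly the point the paper works hardest to control. You claim that for $e\geq e_0$ the $p$-part of $\Aut(G)$ acts trivially on the lift space, so that the immediate descendants of $G$ in $\R$ are in bijection with your fixed space $V^{\Gamma_\R}$ and number $p^{\dim V^{\Gamma_\R}}$. This is false. Isomorphism of descendants $C_{n,e+1}(x+y)$ is governed by the \emph{affine} action \eqref{eqaffine} of ${\rm Stab}_{\U}(x+\Gamma_{n+e})$ on $\mathcal{Z}(n,e,t)=\Sigma_{n+e,t}/\Sigma_{n+e+1,t}$, namely $u\cdot(y+\Gamma_{n+e+1})=(u,1)(x)-x+(u,1)(y)+\Gamma_{n+e+1}$. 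What \cite[Theorem 5.1]{Die10} yields for large $e$ is only that the \emph{linear} part of this action is trivial; the translation term $(u,1)(x)-x$ is a generically nonzero element of $\Gamma_{n+e}/\Gamma_{n+e+1}$, since it vanishes only for $u$ in ${\rm Stab}_{\U}(x+\Gamma_{n+e+1})$, which is in general a proper subgroup of ${\rm Stab}_{\U}(x+\Gamma_{n+e})$ (indeed by \eqref{eqstab} the stabilisers keep shrinking by $p$-th powers as $e$ grows, so they never stabilise). Hence descendants in $\R$ correspond to affine orbits, all of a common $p$-power size $o$ as in \eqref{eqo}, and their number is $|\mathcal{Z}(n,e,t)|/o$, not $|\mathcal{Z}(n,e,t)|$. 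Your bijection claim would force every deep level with nonzero fixed space to be a ramification level; in reality, at every non-ramification level where $\mathcal{Z}(n,e,t)$ is nontrivial one has $o=|\mathcal{Z}(n,e,t)|>1$, and the example after Lemma \ref{lemRam} ($p=7$, $n=24$, $h=6$) shows such levels exist: for type $t=2$ the eigenvalue condition permits ramification at $e\equiv 0,4\bmod 6$, but actual ramification occurs only at $e\equiv 4\bmod 6$.

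This gap then propagates into your parts b) and c). Reducing ramification at level $e$ to $V_e^{\Gamma_\R}\neq 0$ and appealing to the order-$\defd$ periodicity of the characters proves only the periodicity of the \emph{necessary} condition recorded in Lemma \ref{lemRam}, which the paper explicitly shows is not sufficient. The genuine equivalence in b) requires matching the affine orbit structures, including the sizes $o$, at depths $e$ and $e+\defd$: this is what Proposition \ref{propDes5} does, via the bijection $\phi\colon\mathcal{Z}(n,e,t)\to\mathcal{Z}(n,e+\defd,t)$ given by multiplication by $p$ together with the equivariance $\phi(u\cdot z)=u^p\cdot\phi(z)$ and the identity ${\rm Stab}_{\U_2}(x+\Gamma_{n+e+\defd})=({\rm Stab}_{\U_2}(x+\Gamma_{n+e}))^{[p]}$ from \cite[Theorem 5.1]{Die10}. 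Part c) likewise cannot be read off from eigenvalue data: being a root is a statement about non-existence of fixed points under a twisted action, and the paper needs the characterisation of roots in Theorem \ref{thmRoot} before applying the same multiplication-by-$p$ equivariance. You do flag the interaction of the $p$-part with the lift data as the "principal difficulty", which is the right instinct, but your proposed resolution of it (triviality of the $p$-part action for large $e$) is precisely what fails; the passage from "fixed by the stabiliser subgroup of $\G$" to "distinct descendant inside $\R$" is where the theorem actually lives.
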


Our new description of skeleton groups can also be used to prove a first existence result for twins. As before, we assume $p\equiv 5 \bmod 6$. We prove Theorem \ref{thm_newperiod_short}  in Section \ref{secperiod}.

\begin{theorem} \label{thm_newperiod_short}
Let $p \geq 7$ with $p \equiv 5 \bmod 6$. There exists $n_0 = n_0(p)$ so that for all $n \geq n_0$ every skeleton group $G$ at depth $\dep(\S_p(n))$ in $\B_p(n+\defd)$ has a $\defd$-step
twin  at depth $\dep(\S_p(n))-d$ in $\B_p(n)$.
\end{theorem}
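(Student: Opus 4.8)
The plan is to build the $\defd$-step twin $H$ of $G$ explicitly using the refined skeleton construction from Theorem~\ref{thmGFP} and Proposition~\ref{propIsoGT}, and then to match up the descendant trees $\DD(G,\defd)$ and $\DD(H,\defd)$ level by level. First I would apply Theorem~\ref{thmGalprops}(c): since $G$ is a skeleton group at depth $\dep(\S_p(n))$ in $\B_p(n+\defd)$, it lies in some Galois tree $\R$ of Galois order $h$ whose root sits at some depth $e$ in $\S_p(n+\defd)$, and part (c) produces a Galois tree with the same Galois order $h$ whose root is at depth $e-\defd$ in the shifted skeleton — interpreted correctly across the two branches $\B_p(n+\defd)$ and $\B_p(n)$, this is exactly the mechanism that manufactures a candidate $H$ at depth $\dep(\S_p(n))-\defd$. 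The point of invoking Galois order here is that the twin must inherit the same $p'$-part of the automorphism group, and Theorem~\ref{isomautpr}(d) guarantees that Galois order only decreases along descendants, so matching Galois orders at the two roots is a necessary bookkeeping condition for the trees below them to agree.

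Next I would use Theorem~\ref{skelbottom} to control where the relevant groups sit: it tells us that every leaf of a Galois tree in a skeleton reaches the full depth $\dep(\S_p(n))$, so both $G$ and its candidate twin $H$ are genuine leaves (or near-leaves) of their respective Galois trees rather than interior vertices with unexpected branching. Combined with Theorem~\ref{thmGalprops}(a), which says the number of immediate descendants at each sufficiently deep level is a power of $p$, this gives a clean inductive handle: to show $\DD(G,\defd)\cong\DD(H,\defd)$ I would argue by induction on the depth $m\leq\defd$ that $\DD(G,m)\cong\DD(H,m)$, at each stage using part (b) of Theorem~\ref{thmGalprops} — the periodicity of ramification levels with period $\defd$ — to guarantee that $G$ ramifies at level $i$ within its branch exactly when $H$ ramifies at the corresponding level $i$, and then using the explicit parametrisation of immediate descendants from Proposition~\ref{propIsoGT} to set up the isomorphism on that level. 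The first periodicity isomorphism $\iota\colon\B_p(n+\defd,n-\defc)\ra\B_p(n,n-\defc)$ should supply the base identification of the two trees at moderate depth, reducing everything to the large-depth regime where the Galois-tree machinery applies.

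The main obstacle I expect is reconciling the two different periodicities that are simultaneously in play. The first periodicity $\iota$ shifts $n+\defd\mapsto n$ but is only defined up to depth $n-\defc$, whereas $G$ and $H$ live at depth roughly $\dep(\S_p(n))\geq n-\defc$, i.e.\ precisely in the range where $\iota$ fails and where the naive ``periodic parent'' construction $H=\iota(K)$ was already known to break down (as flagged in the introduction). So the isomorphism $\DD(G,\defd)\cong\DD(H,\defd)$ cannot simply be the restriction of $\iota$; it must be rebuilt from the skeleton-group parametrisation, and the delicate part is checking that the number-theoretic data (the stabiliser structure controlled by the hypothesis $p\equiv 5\bmod 6$, via \cite[Theorem~5.1]{Die10}) governing immediate descendants of $G$ and of $H$ really coincide after the depth shift by $\defd$. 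Concretely, I would need to show that the relevant stabilisers, and hence the counts and isomorphism types of the immediate-descendant fans, are preserved under the $\defd$-shift of the ramification pattern — this is where the assumption $p\equiv 5\bmod 6$ does the essential work, and where Theorem~\ref{thmGalprops} must be pushed from a statement about skeletons (capable groups) to a statement about the full descendant trees $\DD(G,\defd)$ including incapable leaves.

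Finally, I would verify the depth bookkeeping and fix the threshold $n_0$: one takes $n_0$ large enough that $e_0(p)$ from Theorem~\ref{thmGalprops} lies below all relevant levels, that $n\geq\max\{\defc,8\}$ holds for both branches $\B_p(n)$ and $\B_p(n+\defd)$, and that the first-periodicity threshold from \cite{Die09} is exceeded; then the $\defd$-step isomorphism assembled level-by-level yields $\DD(G,\defd)\cong\DD(H,\defd)$, which is exactly the assertion that $H$ is a $\defd$-step twin of $G$.
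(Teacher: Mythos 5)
Your proposal does not follow the paper's route, and it contains a genuine gap at exactly the point you flag as ``the main obstacle'': passing from statements about skeletons to the full descendant trees $\DD(G,\defd)$ in the branch. All the tools you build on --- Theorem \ref{skelbottom}, Theorem \ref{thmGalprops}, Proposition \ref{propIsoGT} --- live inside a single skeleton and only see capable groups; in particular, Theorem \ref{thmGalprops}c) moves a Galois-tree root from depth $e$ to depth $e+\defd$ \emph{within the same skeleton} $\S_p(n)$, so it neither goes downward (to depth $e-\defd$) nor compares $\S_p(n+\defd)$ with $\S_p(n)$. Your phrase ``interpreted correctly across the two branches'' is therefore not a reading of part c) but precisely the assertion that would have to be proved. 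Moreover, the non-skeleton part of $\DD(G,\defd)$ consists of incapable groups, and none of the Galois-tree machinery controls them; the paper itself warns of exactly this distinction in the remark before Proposition \ref{propDes5} (descendants in a Galois tree versus descendants in the branch ``leads to additional difficulties''). Naming the obstacle and attributing its resolution to the hypothesis $p\equiv 5\bmod 6$ is not a resolution: that hypothesis enters through the stabiliser statement \eqref{eqstab}, which again governs the parametrisation of skeleton groups, not the incapable descendants hanging off them.

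The paper's actual proof (Theorem \ref{thm_newperiod}) rests on two ideas that are absent from your proposal. First, a pigeonhole argument up the ancestor path: by Theorem \ref{isomautpr}d) the Galois order divides $\defd$ and can only decrease along a descending path, hence changes at most $r$ times, where $r$ is the number of prime divisors of $\defd$; consequently the ancestor path of $G$ contains a segment of length $\defd$ of constant Galois order, yielding an $m$-step ancestor $H$ and its $\defd$-step ancestor $K$ with $\gal(H)=\gal(K)$ for some $m\leq pr-\defd$. Second, on such a constant-Galois-order path --- realised as $K_i=C_{\hatn,i}(f)$ for a single global fixed point $f$ supplied by Theorem \ref{thmGFP} --- one can run the proof of \cite[Theorem~1.3]{Die10}, extended from paths of length $2\defd$ to length $pr$ by strengthening \eqref{eqstab} from $3\defd$ to $3pr$, to obtain $\DD(H,m+\defd)\cong\DD(K,m+\defd)$ as trees in the branch, incapable groups included. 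The twin of $G$ is the image of $G$ under this isomorphism (an $m$-step descendant of $K$), transferred into $\B_p(n)$ by the first periodicity $\iota$. Note that the twin is thus constructed far above $G$, where the Galois order happens to be constant, and not by matching $G$'s own Galois tree with a $\defd$-shifted copy --- the latter is essentially the failed ``periodic parent'' idea. Without the pigeonhole step and without the reduction to a constant-Galois-order path where the arguments of \cite{Die10} apply, your level-by-level induction has no way to control the terminal groups in $\DD(G,\defd)$, and the proposal cannot be completed as written.
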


This result provides a first serious step towards a construction of twins. It is also the first periodicity result that yields a full description of the
growth of the (pruned) branches of a coclass tree with growing depths and widths, see \cite[Appendix A.2]{DEi17} for an overview of known periodicity results.

We have used computational methods to explore further properties of Galois trees. The examples exhibited in Appendix \ref{examples} allow us to make the following observations.

\begin{remark}
\begin{iprf}
 \item[\rm a)] 
 Galois trees with Galois order $d$ are distance regular; see \cite{DEi17}. This result does not extend to Galois trees with arbitrary Galois order; see Figure \ref{fignondistreg}.
 \item[\rm b)] If $p \equiv 5 \bmod 6$, then the number of immediate descendants in a Galois tree is a power of $p$. It seems that this does not extend to $p \equiv 1 \bmod 6$; see Figure \ref{fignondistreg}.
 \item[\rm c)] All Galois trees with Galois order $\defd$ have roots of depth $1$. For arbitrary Galois order, there seems to be no bound to the depths of the roots of Galois trees; see Figure \ref{figs724}.
\end{iprf}
\end{remark}

\section{Preliminaries}\label{antskelgroups}

\noindent We recall some results from algebraic number theory and then the description of skeleton groups and their isomorphism problem.
Along the way, we introduce the notation that is fixed throughout this paper. To assist the reader with the notation, Appendix \ref{glossary} provides a short glossary.

\enlargethispage{6ex}

\noindent {\bf Algebraic number theory.}
Most of the following results can be found in standard books such as Neukirch \cite[Chapter~II]{Neu11}, but we also refer to \cite{DEi17,DS19} for a 
recent treatment and more details. Let $\Q_p$ denote the $p$-adic rational numbers with $p$-adic integers $\Z_p$, 
and let $\theta$ be a primitive $p$-th root of unity over $\Q_p$. The field
$\K = \Q_p(\theta)$ is an extension of degree $\defd$ over
$\Q_p$ with $\Q_p$-basis $\{ 1, \theta, \ldots, \theta^{\defd-1} \}$. The maximal 
order of $\K$ is $\O = \Z_p[\theta]$. It contains a unique maximal ideal $\p = (\theta-1)$, 
which yields the unique series of ideals given by the ideals $\p^m$, where $\p^0=\O.$ 
One can extend this definition  to fractional ideals $\p^m$ 
for all $m \in \Z$. An important property is that $p\p^m=\p^{m+\defd}$ for all $m$. 
Below we write $(\p^m)^\defell$ for the direct product of
$\defell$ copies of $\p^m$, and not for the ideal $\p^{m\defell}$.
The group of units $\U$ of $\O$ has the form $\U = \langle \omega \rangle
\times \langle \theta \rangle \times \U_2$, where $\omega$ is a primitive 
$\defd$-th root of unity in $\Z_p$ and $\U_2 = 1 + \p^2$. 

For $i\in\Z$ coprime to $p$ we define the automorphism $\sigma_i \colon \K \ra \K,\; \theta \ms \theta^i$,
so that $\G = \{ \sigma_i : 1 \leq i \leq \defd \}$ is the Galois group of $\K$; 
it is cyclic of order $\defd$. Throughout, we fix a generator $\sigma$ of $\G$ and set 
$\tau=\sigma^k$ for a fixed divisor $k$ of $\defd$; note that $\tau$ has order $h=\defd/k$ in $\G$ and $d=hk$ holds.

The automorphism $\tau$ induces a diagonalisable $\Z_p$-linear map on $\p^2$; there are $h$ 
distinct eigenspaces of $\Z_p$-dimension $k$ each, with eigenvalues $\{ \omega^{ki} : 0 \leq i \leq \defd-1 \}$. The map $\chi \colon \U \ra \U,\; u \ms u \tau(u)^{-1}$ is a group homomorphism with
$\U = \ker(\chi) \times \im(\chi)$. If $h < \defd$, then $\ker(\chi) = \langle \omega \rangle \times (1 + \mathcal{F})$ where $\mathcal{F} = \{ a \in \p^2 : \tau(a) = a\}$ is a $\Z_p$-subspace 
of dimension $k$ in $\O$; if $h=\defd$, then  $\ker(\chi)=\Z_p^\ast$. The results above are proved analogously to \cite[Lemmas 2.1 \& 2.2]{DEi17}, which cover the case $h=\defd$.

\smallskip

\noindent {\bf Skeleton groups.}
Skeleton groups play a crucial role in almost all recent treatments of coclass graphs. 
These groups were first described by Leedham-Green \& McKay using the name constructible groups and we refer to \cite{LGM02} for details. 
For a recent discussion of skeleton groups we refer to \cite{DS19}. Below we follow a slight modification that has already been used in \cite{DEi17}; proofs can be found in said references.

Let $\Hom=\Hom_{\theta}(\O \wedge \O, \O)$ be the group of homomorphisms $\O \wedge \O \ra \O$ that are compatible with multiplication by $\theta$.
The image of $f \in \Hom$ is an ideal in $\O$, so $\im(f)=\p^{n-1}$ for some $n \in \N$. Given $e \in \{0, \ldots, \dep(\S_p(n))\}$ and $f \in \Hom$ with
image $\p^{n-1}$, one defines a multiplicative group structure on $\O / \p^{n+e-1}$ via
$(a + \p^{n+e-1}) \circ (b + \p^{n+e-1}) = (a+b + \tfrac{1}{2} f(a \wedge b) + \p^{n+e-1})$.

The resulting group $M_{n,e}(f)$ is a group of order $p^{n+e-1}$ and class $2$. The group $\langle \theta \rangle$ 
is cyclic of order $p$;  since $f$ is compatible with multiplication by $\theta$, it follows that $\langle \theta \rangle$  acts 
naturally on $M_{n,e}(f)$. For $n \geq \max\{\defc, 8\}$ and $0 \leq e \leq \dep(\S_p(n))$ a homomorphism $f\in \Hom$ 
with image $\p^{n-1}$ defines a group of depth $e$ in $\S_p(n)$ via $M_{n,e}(f) \rtimes \langle \theta \rangle$ and, conversely, every group 
of depth $e$ in $\S_p(n)$ can be obtained as $M_{n,e}(f) \rtimes \langle \theta \rangle$ for some $f \in \Hom$ with image $\p^{n-1}$;
see e.g.\ \cite[Corollary 3.5]{DS19}.

The following is taken from \cite[Sections 8.2 \&  8.3]{LGM02}, see also \cite[Section 4.1]{DEi17}. The group $\Hom$ is a free $\O$-module and has a natural 
basis $\{\TT_1,\ldots,\TT_\ell\}$ such that the homomorphism $x_1 \TT_1 + \ldots + x_\ell \TT_{\ell}$ has image $\p^n$ if and only if each $x_i\in\p^n$ and 
at least one $x_j\notin \p^{n+1}$.  Leedham-Green \& McKay noted that this basis is not well adapted to the isomorphism problem, so they introduced a different 
generating set $\{\SS_2,\ldots,\SS_{\ell+1}\}$. (The precise definitions can be found on \cite[p.\ 235]{DEi17}; we do not repeat them here because we do not use them explicitly and want to avoid unnecessary notation.) For every $f\in \Hom$ there is a unique tuple $x=(x_1,\ldots,x_\ell)\in \K^\ell$ such that 
$f=\SS(x)=x_1 \SS_2 + \ldots + x_\ell \SS_{\ell+1}$ and we define the corresponding skeleton group as
\[ C_{n,e}(x)=M_{n,e}(\SS(x)) \rtimes \langle \theta \rangle. \]
However, $x_i\notin \O$ is possible, and the base change matrix $B$ from $\{\SS_2,\ldots,\SS_{\ell+1}\}$ to $\{\TT_1,\ldots,\TT_\ell\}$ is not invertible over 
$\O$. We define  \[\Gamma_n = (\p^{n-1})^\ell B^{-1}\quad\text{and}\quad \Delta_n = \Gamma_n \setminus \Gamma_{n+1},\] so that tuples in $\Delta_n$ parametrise the homomorphisms 
that have image equal to $\p^{n-1}$. To discuss the isomorphism problem for the groups $C_{n,e}(x)$, the following is required. For $i \in \{ 2, \ldots, \defell+1\}$ let $\rho_i \colon \U \ra \U$\linebreak be defined by $\rho_i(u)= u^{-1} \sigma_i(u) \sigma_{1-i}(u)$. It is shown in \cite[Lemma~4.3]{DEi17} that  $(u,\varphi)\in \U \split \G$ acts 
on $x=(x_1,\ldots,x_\defell)\in \K^\defell$ via
\begin{eqnarray}\label{eqAct} (u, \varphi ) (x) =  
      (\rho_2(u)^{-1} \varphi(x_1), \ldots, \rho_{\defell+1}(u)^{-1} \varphi(x_\defell)).
\end{eqnarray}

The next theorem summarises some  key results for skeleton groups and their isomorphism problem.\enlargethispage{7ex}

\begin{theorem} 
\label{isomautpr}
Let $n\geq \max\{\defc,8\}$ and $0\leq e\leq \dep(\S_p(n))$. Let $x,y\in\Delta_n$.
\begin{ithm}
 \item  The map $x \ms C_{n,e}(x)$ yields an onto map from $\Delta_n$ to the groups of depth $e$ in $\S_p(n)$.
 \item We have $C_{n,e}(x) \cong C_{n,e}(y)$ if and only if $(u, \varphi)(x) \equiv y \bmod \Gamma_{n+e}$ for some $(u, \varphi) \in \U \split \G$.
 \item We have $h\mid \gal(C_{n,e}(x))$  if and only if $(u, \tau)(x) \equiv x \bmod \Gamma_{n+e}$ for some $u\in \U$.
 \item If $G$ has depth $e\geq1$ in $\S_p(n)$, then $\gal(G)$ divides $d$ and the Galois order of every descendant of $G$ in $\S_p(n)$ divides $\gal(G)$.
\end{ithm}
\end{theorem}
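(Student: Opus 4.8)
The plan is to read off parts a) and b) from the parametrisation recalled above together with the isomorphism theory of Leedham-Green \& McKay, and then to obtain c) and d) by specialising and analysing the action in \eqref{eqAct}. For a), every group of depth $e$ in $\S_p(n)$ equals $M_{n,e}(f)\rtimes\langle\theta\rangle$ for some $f\in\Hom$ with $\im(f)=\p^{n-1}$, and conversely; writing $f=\SS(x)$, the homomorphisms with image exactly $\p^{n-1}$ are precisely those with $x\in\Delta_n$, so $x\mapsto C_{n,e}(x)$ is onto. For b), a group isomorphism $C_{n,e}(x)\cong C_{n,e}(y)$ is translated into a compatible pair $(u,\varphi)\in\U\split\G$ carrying $\SS(x)$ to $\SS(y)$ modulo the precision $\p^{n+e-1}$; read modulo $\Gamma_{n+e}$ via \eqref{eqAct}, this is exactly the stated criterion, and it follows as in \cite[Lemma~4.3]{DEi17} (see also \cite{DS19}).

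For c), I would specialise b) to $y=x$. The automorphisms of $G=C_{n,e}(x)$ of order prime to $p$ are then induced, up to inner automorphisms, by the stabiliser of $x$ modulo $\Gamma_{n+e}$ in $\U\split\G$. The Galois component of such a pair lies in $\G$, which is cyclic of order $d$ coprime to $p$, whereas the factors $\langle\theta\rangle$ and $\U_2=1+\p^2$ of $\U$ are pro-$p$ and hence contribute $p$-automorphisms only. Therefore $h\mid\gal(G)$ holds precisely when the order-$h$ element $\tau$ lifts to a stabilising pair $(u,\tau)$ with $u\in\U$, as claimed.

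For d), I first bound $\gal(G)$ when $e\geq1$. For $u=\omega^{j}\in\Z_p^{\ast}$ we have $\sigma_i(\omega^{j})=\omega^{j}$ for every $i$, so $\rho_i(\omega^{j})=\omega^{j}$, and by \eqref{eqAct} the pair $(\omega^{j},1)$ acts as the uniform scaling $x\mapsto\omega^{-j}x$. If $j\not\equiv0\bmod d$ then $\omega^{-j}-1$ is a $p$-adic unit, since the $d$-th roots of unity are distinct modulo $p$; multiplication by this unit preserves every $\Gamma_m$, so $(\omega^{-j}-1)x\in\Gamma_{n+e}$ forces $x\in\Gamma_{n+e}\subseteq\Gamma_{n+1}$, contradicting $x\in\Delta_n$. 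Hence no non-trivial pure scaling stabilises $x$, the $p'$-part of the stabiliser injects into $\G$ via its Galois component, and $\gal(G)\mid|\G|=d$. At depth $e=0$ one instead has $x\equiv0\bmod\Gamma_n$, so every scaling fixes $x$ and the extra factor $\langle\omega\rangle$ survives, which accounts for $\gal(S_p(n))=d^{2}$.

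Finally, let $H$ be a descendant of $G$ at depth $e'>e$, with parameter $y\in\Delta_n$. Passing from $H$ to its quotient $G=H/\gamma(H)$ corresponds to coarsening the precision from $\p^{n+e'-1}$ to $\p^{n+e-1}$, so after adjusting $y$ within its isomorphism class we may assume $y\equiv x\bmod\Gamma_{n+e}$. With $h=\gal(H)$, part c) supplies $u\in\U$ such that $(u,\tau)(y)\equiv y\bmod\Gamma_{n+e'}$; since \eqref{eqAct} preserves every $\Gamma_m$, reducing modulo the larger ideal $\Gamma_{n+e}\supseteq\Gamma_{n+e'}$ yields $(u,\tau)(x)\equiv x\bmod\Gamma_{n+e}$, whence $h\mid\gal(G)$ by c), i.e.\ $\gal(H)\mid\gal(G)$. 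The main obstacle is c): pinning down that the $p'$-part of $\Aut(G)$ is exactly the Galois component of the $\U\split\G$-stabiliser, so that $\langle\theta\rangle$ and $\U_2$ yield only $p$-automorphisms while $\langle\omega\rangle$ acts freely for $e\geq1$. Once this structural dictionary is in place, parts a), b) and d) are comparatively formal, the sole computation being the scaling argument above.
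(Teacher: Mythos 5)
Your parts a), b) and d) follow the same route as the paper: a) and b) are exactly the parametrisation and isomorphism results of Leedham-Green \& McKay as refined in \cite[Sections 4.1--4.2]{DEi17}, which the paper itself simply cites, and your deduction of d) from c) (replace the parameter $y$ of the descendant by an isomorphic one congruent to $x$ modulo $\Gamma_{n+e}$, then coarsen the stabilising congruence from $\Gamma_{n+e'}$ to $\Gamma_{n+e}$ using the $\Z_p$-linearity of the action \eqref{eqAct}) is essentially what the paper means by ``this implies c) and d)''. Your scaling computation, showing that $(\omega^j,1)$ with $j\not\equiv 0\bmod \defd$ fixes no $x\in\Delta_n$ modulo $\Gamma_{n+e}$ when $e\geq 1$, is also correct.

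The gap is in c), and you have named it yourself without closing it. Specialising b) to $y=x$ is a tautology: it asserts the existence of a stabilising pair exactly when $C_{n,e}(x)\cong C_{n,e}(x)$, which is always true, and it says nothing about how $\Aut(C_{n,e}(x))$ is assembled from such pairs. The substantive claim --- that every automorphism of $C_{n,e}(x)$ of order prime to $p$ is induced, up to $p$-automorphisms, by an element of ${\rm Stab}_{\U\rtimes\G}(x+\Gamma_{n+e})$ --- is precisely what the hard direction of c) needs ($h\mid\gal(G)$ implies the existence of $(u,\tau)$), and your heuristic establishes only the easy containment: stabilising pairs whose Galois component is trivial and whose unit part lies in $\langle\theta\rangle\times\U_2$ induce automorphisms of $p$-power order. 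Nothing in the proposal excludes ``exotic'' $p'$-automorphisms of the finite $p$-group $C_{n,e}(x)$ that do not arise from the $\U\rtimes\G$-action at all; ruling these out is a genuine theorem, not formal bookkeeping. The paper closes exactly this hole by citing \cite[Lemma 5.4]{Die10} (resting on Leedham-Green \& McKay's structure theory of automorphisms of constructible groups, cf.\ \cite[Theorem 4.4]{DEi17}): the $p'$-part of $|\Aut(C_{n,e}(x))|$ equals the order of the image of the projection ${\rm Stab}_{\U\rtimes\G}(x+\Gamma_{n+e})\to\G$. Once that is available, c) is immediate because the image is a subgroup of the cyclic group $\G$ of order $\defd$ (so $h$ divides its order if and only if $\tau$ lies in it), and the divisibility statements in d) follow as you describe; without it, your c), and hence your d), is unproven.
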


\begin{proof}
Part a) is proved in  \cite[Sections 8.2 \&  8.3]{LGM02} and \cite[Section 4.1]{DEi17}. 
A proof for part b) can be found in \cite[Section 4.2]{DEi17}, based on ideas of Leedham-Green \& McKay. 
By \cite[Lemma 5.4]{Die10}, the $p'$-part of $|\Aut(C_{n,e}(x))|$ is the same as 
the order of the image of the projection  ${\rm Stab}_{\mathcal{U}\rtimes\G}(x+\Gamma_{n+e})\to \G$; 
thus, $h$ divides the Galois order of $C_{n,e}(x)$ if and only if there is some $u\in\mathcal{U}$ with 
$(u,\tau)(x)\equiv x\bmod \Gamma_{n+e}$; see \cite[Theorem 4.4]{DEi17} for additional information on 
$\Aut(C_{n,e}(x))$. This implies c) and d).
\end{proof}

\section{A local to global principle}
\label{localglobal}

\noindent Theorem~\ref{isomautpr} describes the groups of depth $e$ in $\S_p(n)$ whose Galois order is divisible by $h$ via fixed
points under some action modulo $\Gamma_{n+e}$; that is, they are
fixed points in a \emph{local} setting. The  aim of this section is to prove
the following theorem, which shows that they can also be described via {\em global} fixed points. 
This result is our first main result and a central ingredient for the proofs of our main theorems.

\begin{theorem} \label{thmGFP}
Let $p \geqslant 7$ and $n \geqslant \max \{ \defc, 8 \}$.
Every group in $\S_p(n)$ is defined by a `global Galois fixed point': if $G$ has depth $e$ in $\S_p(n)$ 
and Galois order $h$, then $G\cong C_{n,e}(x)$ for some $x \in \Delta_n$ that is fixed
by some $(v,\varphi)\in \U\rtimes \G$, where $\varphi$ has order $h$ and $v \in \Z_p$ is a $\defd$-th root of unity.
\end{theorem}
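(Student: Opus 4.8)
The plan is to upgrade the \emph{local} fixed-point description of Theorem~\ref{isomautpr}c) into the \emph{global} statement of the theorem. By Theorem~\ref{isomautpr}c), if $G \cong C_{n,e}(x)$ has Galois order divisible by $h$, then there exists $(u,\tau) \in \U \rtimes \G$ with $(u,\tau)(x) \equiv x \bmod \Gamma_{n+e}$; here $\tau = \sigma^k$ has order $h$, as fixed in the preliminaries. The point is that this is only a congruence modulo $\Gamma_{n+e}$, and the pair $(u,\tau)$ need not fix $x$ on the nose over $\K$. So first I would set up the problem as one of lifting an approximate fixed point to an exact one: I want to replace $x$ by an isomorphic (i.e.\ $\U \rtimes \G$-equivalent modulo $\Gamma_{n+e}$) representative $x'$ and to adjust $u$ to some $v$ so that $(v,\varphi)(x') = x'$ holds exactly, with $\varphi$ of order exactly $h$ and $v$ a root of unity in $\Z_p$.

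The key structural input is the decomposition $\U = \langle \omega \rangle \times \langle \theta \rangle \times \U_2$ together with the homomorphism $\chi(u) = u\,\tau(u)^{-1}$ satisfying $\U = \ker(\chi) \times \im(\chi)$, and the description $\ker(\chi) = \langle \omega \rangle \times (1 + \mathcal F)$ (for $h < d$) where $\mathcal F$ is the $k$-dimensional fixed space of $\tau$ on $\p^2$. First I would use the action formula \eqref{eqAct}: since $\tau = \sigma^k$ acts coordinatewise through the twisted operators $\rho_i(u)^{-1}\tau(\cdot)$, the condition $(u,\tau)(x) \equiv x$ unwinds, in each coordinate, to $\rho_i(u)^{-1}\tau(x_i) \equiv x_i$. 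The plan is then to decompose $u$ along $\U = \ker(\chi) \times \im(\chi)$ and absorb the $\im(\chi)$-component by a change of representative within the isomorphism class: replacing $x$ by $w(x)$ for a suitable $w \in \U$ conjugates $(u,\tau)$ to $(u',\tau)$ with $u' \in \ker(\chi)$, i.e.\ $u'$ differs from a root of unity only by a factor in $1 + \mathcal F$. The residual $(1+\mathcal F)$-part should then be removable \emph{exactly} over $\K$, because $\mathcal F$ consists precisely of the $\tau$-fixed elements, so the twisting it contributes is compatible with solving the fixed-point equation on the nose rather than just modulo $\Gamma_{n+e}$; this is what forces $v$ down to the root-of-unity part $\langle \omega \rangle \subseteq \Z_p^\ast$, yielding the $\Z_p$-valued $d$-th root of unity $v$ in the statement. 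One also has to confirm $\varphi$ can be taken of order exactly $h$: since $\tau$ already has order $h$ and $\ker(\chi)$ contributes nothing to the $\G$-component, $\varphi = \tau$ (or a conjugate) does the job.

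The step I expect to be the main obstacle is showing that the $(1+\mathcal F)$-part of $u$ can genuinely be cleared to produce an \emph{exact} (global) fixed point over $\K$, rather than merely a better congruence modulo some higher power of $\p$. This is where a local-to-global passage is really needed: I anticipate an inverse-limit or successive-approximation argument, solving the fixed-point equation modulo $\Gamma_{n+e}$, then $\Gamma_{n+e+1}$, and so on, and checking the corrections converge $p$-adically. Completeness of $\O = \Z_p[\theta]$ and the relation $p\p^m = \p^{m+d}$ (so that the ideals $\p^m$ give a genuine complete filtration) are exactly the tools that make such a limit exist; the $\tau$-fixedness of $\mathcal F$ ensures each approximation step is solvable. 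A secondary technical point is bookkeeping across the $\defell$ coordinates simultaneously: the operators $\rho_i$ vary with $i$, so I would need to verify that a single $v$ works in all coordinates at once, which should follow because $\rho_i(v)$ depends only on the Galois action on the root of unity $v$ and these are controlled uniformly by $\varphi = \tau$. Once the exact fixed point $x$ with $(v,\varphi)(x) = x$ is produced, reading off that $\varphi$ has order $h$ and $v \in \Z_p$ is a $d$-th root of unity completes the proof.
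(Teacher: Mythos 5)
Your opening step is sound and is exactly the paper's Lemma \ref{equiv1}: writing $u=vw$ with $v\in\ker(\chi)$ and $w\in\im(\chi)$, say $w^{-1}=s\tau(s)^{-1}$, and passing to the representative $(s,1)(x)$ conjugates the approximate stabiliser $(u,\tau)$ into $(v,\tau)$ with $v\in\ker(\chi)$. The genuine gap is the step you yourself flag as the main obstacle: you never prove that the residual part of $v$ in $\ker(\chi)\cap\U_2$ can be cleared to produce an exact fixed point; you only anticipate a successive-approximation argument and assert that ``the $\tau$-fixedness of $\mathcal{F}$ ensures each approximation step is solvable''. That assertion is unsubstantiated, and it is not the mechanism that makes the theorem true. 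If you actually run your induction, the linearised equation at level $m$ is $((v,\tau)-1)(\epsilon)\equiv-\delta_m\bmod\Gamma_{n+e+m+1}$, where $\delta_m=(v,\tau)(x_m)-x_m$ is the current defect. Since the $\U_2$-part of $v$ acts trivially on each quotient $\Gamma_s/\Gamma_{s+1}$, the operator $(v,\tau)-1$ is invertible on every $(1,\tau)$-eigencomponent of $\Gamma_{n+e+m}/\Gamma_{n+e+m+1}$ except the one with eigenvalue $\omega^j$, where it vanishes; so solvability of each step requires the $\omega^j$-eigenpart of the defect to vanish, and proving that is essentially as hard as the theorem itself. Completeness of $\O$ is not the issue; the obstruction is algebraic, and your sketch does not address it.

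The paper's proof (Lemmas \ref{lattice} and \ref{xiny}, Proposition \ref{equiv2}) needs no limit process at all, and this is the idea your proposal is missing. By Lemma \ref{lattice} one has the $(1,\tau)$-eigenlattice decomposition $\Gamma_n=\bigoplus_i\Sigma_{n,i}$; write $y=\sum_i y_i$ accordingly and $v=\omega^j\theta^q u$ with $u\in\U_2$. The congruence $(v,\tau)(y)\equiv y\bmod\Gamma_{n+e}$ then gives, component-wise, $(u,1)(y_i)\equiv\omega^{j-i}y_i\bmod\Gamma_{n+e}$. The decisive point is coprimality: $(u,1)$ acts on $\Gamma_1/\Gamma_{n+e}$ with $p$-power order, whereas $\omega^{j-i}$ has order dividing $d=p-1$; hence $(u,1)$ fixes each $y_i$ modulo $\Gamma_{n+e}$, so $(\omega^{j-i}-1)y_i\in\Gamma_{n+e}$, which forces $y_i\in\Gamma_{n+e}$ whenever $i\neq j$. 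Therefore $y\equiv y_j\bmod\Gamma_{n+e}$, and $C_{n,e}(y)\cong C_{n,e}(y_j)$ by Theorem \ref{isomautpr}b). The eigencomponent $y_j$ is already a global fixed point on the nose: $(1,\tau)(y_j)=\omega^j y_j$, that is, $(\omega^j,\tau)(y_j)=y_j$, with $\omega^j\in\Z_p$ a $\defd$-th root of unity and $\tau$ of order $h$. In short, the global fixed point is obtained in one stroke by projecting $y$ onto a single $\tau$-eigencomponent --- a change of representative inside the congruence class modulo $\Gamma_{n+e}$, which does not change the group --- rather than by $p$-adic convergence of corrections; this projection idea and the $p$-power-versus-prime-to-$p$ argument are the two missing ingredients without which your plan does not close.
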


Throughout this section, let $n \geq \max\{ \defc, 8\}$ and 
$0 \leq e \leq \dep(\S_p(n))$. Recall that $\tau = \sigma^k$ where $d = hk$, so  $\tau$ generates 
the subgroup of order $h$ in the Galois group~ $\G$. In some results below we use that the homomorphism $\chi \colon
\U \ra \U,\; u \ms u \tau(u)^{-1}$ induces the decomposition $\U = \ker(\chi) \times \im(\chi)$.

\begin{lemma}
\label{equiv1}
If $G$ has depth $e$ in  $\S_p(n)$, then  $h \mid \gal(G)$ if and only if $G \cong C_{n,e}(x)$  
for some  $x \in \Delta_n$ such that $(v, \tau)(x) \equiv x \bmod \Gamma_{n+e}$ for some $v\in \ker(\chi)$.
\end{lemma}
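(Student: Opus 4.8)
The plan is to reduce everything to Theorem~\ref{isomautpr} and then to exploit the decomposition $\U=\ker(\chi)\times\im(\chi)$ by a coboundary (conjugation) trick. The ``if'' direction is immediate: if $(v,\tau)(x)\equiv x\bmod\Gamma_{n+e}$ for some $v\in\ker(\chi)\subseteq\U$, then in particular $(u,\tau)(x)\equiv x$ holds with the unit $u=v\in\U$, so Theorem~\ref{isomautpr}c) gives $h\mid\gal(C_{n,e}(x))=\gal(G)$, the Galois order being an isomorphism invariant.

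For the ``only if'' direction I would first invoke Theorem~\ref{isomautpr}a) to write $G\cong C_{n,e}(x_0)$ with $x_0\in\Delta_n$, and then Theorem~\ref{isomautpr}c) to obtain some $u\in\U$ with $(u,\tau)(x_0)\equiv x_0\bmod\Gamma_{n+e}$. The idea is to move $x_0$ within its $\U\split\G$-orbit to a point whose stabilising unit lies in $\ker(\chi)$. Concretely, pass to $y=(s,1)(x_0)$ for a suitable $s\in\U$; since $(u,\tau)$ fixes the class of $x_0$ modulo $\Gamma_{n+e}$, the conjugate $(s,1)(u,\tau)(s,1)^{-1}$ fixes the class of $y$. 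A direct computation in $\U\split\G$, using that $\U$ is abelian and that $\tau$ acts on the unit component, gives
\[ (s,1)(u,\tau)(s,1)^{-1}=(s\,u\,\tau(s)^{-1},\tau)=(u\,\chi(s),\tau), \]
since $\chi(s)=s\,\tau(s)^{-1}$. Thus conjugation by $(s,1)$ multiplies the stabilising unit $u$ by the arbitrary element $\chi(s)\in\im(\chi)$.

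It then remains to choose $s$ so that $u\,\chi(s)\in\ker(\chi)$. Writing $u=v\,w$ with $v\in\ker(\chi)$ and $w=\chi(t)\in\im(\chi)$ according to $\U=\ker(\chi)\times\im(\chi)$, the choice $s=t^{-1}$ yields $u\,\chi(s)=v\,\chi(t)\,\chi(t^{-1})=v\in\ker(\chi)$. Hence $(v,\tau)$ stabilises the class of $y$ modulo $\Gamma_{n+e}$. Finally, since the $\U\split\G$-action permutes the classes of $\Delta_n$ modulo $\Gamma_{n+e}$ (this is exactly the setting underlying Theorem~\ref{isomautpr}b), because units and Galois automorphisms preserve the ideals $\p^m$ and hence $\Gamma_{n+e}$ and $\Delta_n$), the representative $y$ may be taken in $\Delta_n$, and $C_{n,e}(y)\cong C_{n,e}(x_0)\cong G$ by Theorem~\ref{isomautpr}b). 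Setting $x:=y$ gives the claim.

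I expect the only points requiring genuine care to be the two routine verifications: the conjugation identity in $\U\split\G$ (keeping the Galois twist $\tau(s)$ in the correct position, which yields the factor $\chi(s)$), and the fact that the action descends to $\Delta_n\bmod\Gamma_{n+e}$, so that conjugating a stabiliser genuinely produces the stabiliser of an honest point of $\Delta_n$. Neither is deep; the algebraic heart of the argument is simply the cancellation $u\,\chi(s)=v$, which absorbs the $\im(\chi)$-component of the stabilising unit and leaves only its $\ker(\chi)$-component.
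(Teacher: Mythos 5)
Your proof is correct and is essentially the paper's own argument: both reduce to Theorem~\ref{isomautpr}c), decompose the stabilising unit $u=vw$ via $\U=\ker(\chi)\times\im(\chi)$, and replace $x_0$ by $(s,1)(x_0)$ with $\chi(s)=w^{-1}$, so that the conjugate stabiliser $(s u\tau(s)^{-1},\tau)=(u\chi(s),\tau)=(v,\tau)$ lies over $\ker(\chi)$. The two routine points you flag (the conjugation identity in $\U\split\G$ and the fact that the action preserves $\Delta_n$ and congruence modulo $\Gamma_{n+e}$) are exactly the ones the paper uses implicitly, so there is no gap.
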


\begin{proof}
Theorem~\ref{isomautpr}c) shows that $G \in \S_p(n)$ and $h \mid \gal(G)$ if and only if $G \cong C_{n,e}(y)$ for some $y\in\Delta_n$ 
such that  $(u, \tau)(y) \equiv y \bmod \Gamma_{n+e}$ for some $u\in\U$. Write $u = v w$ with $v \in \ker(\chi)$ and 
$w \in \im(\chi)$, that is, $w^{-1}=s \tau(s)^{-1}$ for some  $s \in \U$. Now define $x = (s,1)(y)$; Theorem~\ref{isomautpr}b)
shows that $G\cong C_{n,e}(x)$ and, by construction, we have 
\[x \equiv(s,1)(u,\tau)(y) \equiv (s,1)(u,\tau)(s,1)^{-1}(x) \equiv (s u \tau(s)^{-1},\tau)(x) \equiv (v,\tau)(x)\bmod \Gamma_{n+e}.\qedhere\]
\end{proof}

As a next step, we investigate the action of $\tau$ on $\Gamma_n$. Let $\L = \{ a \in \K : \tau(a) = a\}$ be the fixed field
in $\K$ under the action of $\tau$. 
Let $\O_\L = \O \cap \L$ the maximal order in 
$\L$ and let $\U_\L = \U \cap \L$ be its unit group; note that $\U_\L = \ker(\chi)$. For $i \in \{0, \ldots, \defd-1\}$ let
\[ \Sigma_{n,i} = \{ x \in \Gamma_n : (1,\tau)(x) = \omega^i x \} \]
and
\[ \Omega_{n,i} = \{ x \in \Delta_n : (1,\tau)(x) = \omega^i x \}. \]
Note that $\Omega_{n,i} = \Sigma_{n,i} \setminus \Sigma_{n+1,i}$ for all $i \in \{0, \ldots, \defd-1\}$.

\begin{lemma}
\label{lattice}
Let $\II = \{ (kj) \bmod \defd : 0 \leq j \leq \defd-1\}$. For each $i \in \II$ the set
$\Sigma_{n,i}$ is an $\O_\L$-sublattice of $\Gamma_n$ such that $(u,1)(x) \in
\Sigma_{n,i}$ for each $x \in \Sigma_{n,i}$ and  $u \in \U_\L$. One can decompose
\[ \Gamma_n = \bigoplus\nolimits_{i \in \II} \Sigma_{n,i}.\]
\end{lemma}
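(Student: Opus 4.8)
The plan is to realise $\Sigma_{n,i}$ as the intersection of the lattice $\Gamma_n$ with an eigenspace of $\tau$ and then to transport the eigenspace decomposition of $\K^\defell$ down to $\Gamma_n$ by an averaging argument. Write $W_i=\{a\in\K:\tau(a)=\omega^i a\}$ for the $\omega^i$-eigenspace of $\tau$ in $\K$, and $W_i^\defell$ for the $\omega^i$-eigenspace of the componentwise action of $\tau$ on $\K^\defell$. Since $u=1$ gives $\rho_j(1)=1$, the map $(1,\tau)$ of \eqref{eqAct} sends $x$ to $(\tau(x_1),\dots,\tau(x_\defell))$, so $\Sigma_{n,i}=\Gamma_n\cap W_i^\defell$. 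As $\tau$ is a $\Q_p$-linear automorphism of $\K$ with $\tau^h=\mathrm{id}$ and $h\mid \defd$ is prime to $p$, its minimal polynomial divides the separable polynomial $X^h-1=\prod_{i\in\II}(X-\omega^i)$, which splits over $\Z_p$ because $\Z_p$ contains all $(p-1)$-st roots of unity; hence $\tau$ is diagonalisable and $\K=\bigoplus_{i\in\II}W_i$. Throughout I use two structural facts about $\Gamma_n=(\p^{n-1})^\defell B^{-1}$: it is an $\O$-submodule of $\K^\defell$ (being the image of the $\O$-module $(\p^{n-1})^\defell$ under right multiplication by $B^{-1}$), and it is stable under the action \eqref{eqAct} of $\U\rtimes\G$, i.e.\ $\U\rtimes\G$ permutes $\Gamma_n$; this is the property of the construction that makes the congruences modulo $\Gamma_{n+e}$ in Theorem~\ref{isomautpr}b,c) meaningful.

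For the first two assertions, fix $i\in\II$ and $x\in\Sigma_{n,i}$. For $a\in\O_\L\subseteq\O$ we have $ax\in\Gamma_n$, and $\tau(ax_j)=\tau(a)\tau(x_j)=a\,\omega^i x_j$ because $a\in\L$, so $ax\in\Sigma_{n,i}$; thus $\Sigma_{n,i}$ is an $\O_\L$-submodule of $\Gamma_n$. As $\O$ is module-finite over the discrete valuation ring $\O_\L$, the module $\Gamma_n$ is finitely generated over $\O_\L$, so its torsion-free submodule $\Sigma_{n,i}$ is finitely generated and hence free, that is, a sublattice. For the $\U_\L$-invariance the key observation is that $\rho_j(u)\in\L$ whenever $u\in\U_\L$: since $\G$ is abelian, $\tau$ commutes with $\sigma_j$ and $\sigma_{1-j}$, so $\tau(\rho_j(u))=\tau(u)^{-1}\sigma_j(\tau(u))\sigma_{1-j}(\tau(u))=\rho_j(u)$ using $\tau(u)=u$. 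Consequently, for $u\in\U_\L$ the $j$-th component of $(u,1)(x)$ satisfies $\tau(\rho_j(u)^{-1}x_j)=\rho_j(u)^{-1}\omega^i x_j$, so $(u,1)(x)\in W_i^\defell$; combined with the $\U\rtimes\G$-stability of $\Gamma_n$ this yields $(u,1)(x)\in\Sigma_{n,i}$.

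It remains to prove $\Gamma_n=\bigoplus_{i\in\II}\Sigma_{n,i}$. Directness and the inclusion $\bigoplus_{i\in\II}\Sigma_{n,i}\subseteq\Gamma_n$ are immediate from $\Sigma_{n,i}=\Gamma_n\cap W_i^\defell$ and the direct sum $\K^\defell=\bigoplus_{i\in\II}W_i^\defell$, so the point is surjectivity. For this I use the spectral projections $\pi_i=\tfrac1h\sum_{m=0}^{h-1}\omega^{-im}(1,\tau^m)$; a short reindexing shows $\pi_i(x)\in W_i^\defell$, and $\sum_{i\in\II}\pi_i=\mathrm{id}$ by an orthogonality computation using that $\omega^k$ is a primitive $h$-th root of unity. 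The crucial point, and the step I expect to require the most care, is that each $\pi_i$ maps $\Gamma_n$ into itself: here $\omega^{-im}\in\Z_p^\ast\subseteq\O$ and, because $h\mid p-1$ is prime to $p$, we have $\tfrac1h\in\Z_p^\ast\subseteq\O$, so $\pi_i(x)$ is an $\O$-linear combination of the vectors $(1,\tau^m)(x)\in\Gamma_n$ and hence lies in the $\O$-module $\Gamma_n$. Since also $\pi_i(x)\in W_i^\defell$, we get $\pi_i(x)\in\Sigma_{n,i}$, whence $x=\sum_{i\in\II}\pi_i(x)\in\sum_{i\in\II}\Sigma_{n,i}$. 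Together with the inclusion and directness this gives the claimed decomposition, completing the proof.
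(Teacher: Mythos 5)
Your proof is correct, but it reaches the key decomposition by a genuinely different route than the paper. The paper proves nothing from scratch here: it invokes \cite[Lemma 5.5]{DEi17}, which is precisely the case $h=\defd$ of the lemma (eigenspaces of $(1,\sigma)$), and then observes that any $\sigma$-eigenvector with eigenvalue $\omega^j$ is a $\tau$-eigenvector with eigenvalue $\omega^{kj}$; regrouping the $\sigma$-eigenspaces accordingly yields both the sublattice property and $\Gamma_n=\bigoplus_{i\in\II}\Sigma_{n,i}$ in one stroke. You instead give a self-contained argument: $\tau$ is diagonalisable because $X^h-1$ splits into distinct linear factors over $\Z_p$, and the idempotents $\pi_i=\tfrac1h\sum_{m}\omega^{-im}(1,\tau^m)$ map $\Gamma_n$ into $\Sigma_{n,i}$ because $\tfrac1h$ and $\omega^{-im}$ lie in $\Z_p\subseteq\O$, $\Gamma_n$ is an $\O$-module, and $\Gamma_n$ is stable under the $\G$-action. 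Your approach buys independence from the $h=\defd$ case (and would apply verbatim to any lattice stable under an automorphism of order prime to $p$), and your lattice-freeness argument ($\Sigma_{n,i}$ is a finitely generated torsion-free module over the discrete valuation ring $\O_\L$) is more careful than the paper's one-line deduction; the paper's approach is shorter and keeps visible the finer $\sigma$-eigenspace decomposition of \cite{DEi17}, which recurs elsewhere (e.g., in the proof of Lemma \ref{xiny}). The $\O_\L$- and $\U_\L$-invariance computations are essentially identical in both proofs; you additionally spell out why $\rho_j(u)\in\U_\L$ for $u\in\U_\L$, which the paper asserts without comment. One point to flag: both your projection step and the containment $(u,1)(x)\in\Sigma_{n,i}$ require that the action \eqref{eqAct} of $\U\rtimes\G$ maps $\Gamma_n$ to itself, which you assert as a structural fact without proof or citation. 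This is acceptable --- the paper relies on it tacitly as well, and it follows from the correspondence between tuples and elements of $\Hom$ in \cite[Section 4]{DEi17} --- but it is the one ingredient of your argument that is neither proved nor referenced.
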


\begin{proof}
For $h=\defd$ this is \cite[Lemma 5.5]{DEi17}. It follows readily from that lemma that $\Sigma_{n,i}$ is an $\Z_p$-sublattice of $\Gamma_n$; 
note that $(1,\sigma)(x) = \omega^i x$ implies that $(1,\tau)(x) = \omega^{ki} x$, and 
this also asserts that $\Gamma_n = \bigoplus_{i \in \II} \Sigma_{n,i}$.
It remains to show that each $\Sigma_{n,i}$ is invariant under the
scalar action of $\O_\L$ and the twisted action of $\U_\L$. Let $x \in \Sigma_{n,i}$. If $a\in\O_\L$, then $(1,\tau)(ax)
= \tau(a) (1,\tau)(x) = a \omega^i x = \omega^i (ax)$, so $ax\in \Sigma_{n,i}$. 
For $u \in \U_\L$ we have $(u,1)(x) = 
(\rho_2(u)^{-1} x_1, \ldots, \rho_{\defell+1}(u)^{-1} x_\ell)$. Since $u\in \U_\L$, each $\rho_j(u) \in \U_\L$, so $(1,\tau)((u,1)(x)) = (u,1) ((1,\tau)(x)) 
= (u,1) (\omega^i x) = \omega^i ((u,1)(x))$, as claimed. 
\end{proof}

Next, we give a proof for a number-theoretic condition for the existence of global fixed points.

\begin{lemma} \label{xiny}
We have\, $\Omega_{n,i} \neq \emptyset$ if and only if $(1,\tau)$ has an eigenvector in $\Delta_1$ with eigenvalue $\omega^{i+k(-n+1)}$; in particular, 
the eigenvalues of $(1,\tau)$ on $\Delta_1$ are $\{ \omega^{k(-2j+1)} : j = 1,\ldots,\defell \}$, so $\Omega_{n,i} \neq \emptyset$ if and only if 
$i \equiv k(n-2j) \bmod \defd$ for some $j=1,\ldots,\defell$.
\end{lemma}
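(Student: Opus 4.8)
The plan is to reduce the statement about $\Omega_{n,i}$ to a statement about eigenvectors of the single linear map $(1,\tau)$ acting on $\Delta_1$, and then compute those eigenvalues explicitly. First I would observe that $\Omega_{n,i} \neq \emptyset$ precisely when $\Sigma_{n,i}$ contains an element not in $\Sigma_{n+1,i}$, i.e.\ when the $\O_\L$-lattice $\Sigma_{n,i}$ from Lemma~\ref{lattice} is strictly larger than $\Sigma_{n+1,i}$. The key mechanism is scaling by a uniformizer: multiplication by $p$ sends $\p^m$ to $\p^{m+\defd}$, and more usefully multiplication by $\theta - 1$ (a generator of $\p$) raises the $\p$-adic level by one. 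I would use this to set up an explicit bijection, compatible with the twisted action in \eqref{eqAct}, between $\Sigma_{n,i}$ and $\Sigma_{1, i'}$ for an appropriately shifted index $i'$, thereby transporting the nonemptiness of $\Omega_{n,i}$ down to the bottom layer $\Delta_1$.

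The index shift is where the arithmetic happens. Passing from level $n$ down to level $1$ should multiply an eigenvector by a power of a uniformizer that changes the $\p$-level by $\defd(n-1)$ steps, or equivalently involves multiplying by $p^{n-1}$ up to a unit; the point is to track how $\tau$ acts on that scaling factor. Because $\tau(\theta) = \theta^{\sigma^k}$ and, on the relevant eigenspaces, $\tau$ acts by the eigenvalue $\omega^{k(\cdots)}$, shifting the level by one multiplies the $\tau$-eigenvalue by a fixed power of $\omega$, namely $\omega^{-k}$ (the factor coming from how $\tau$ scales the uniformizer $\theta-1$ modulo higher terms). Accumulating $n-1$ such shifts converts an eigenvalue $\omega^i$ at level $n$ into eigenvalue $\omega^{i + k(-n+1)}$ at level $1$. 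This yields the first equivalence: $\Omega_{n,i}\neq\emptyset$ iff $(1,\tau)$ has an eigenvector in $\Delta_1$ with eigenvalue $\omega^{i+k(-n+1)}$.

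Next I would compute the eigenvalues of $(1,\tau)$ on $\Delta_1$ directly from the coordinate description of the action. By \eqref{eqAct}, $(1,\tau)$ sends $x = (x_1,\dots,x_\defell)$ to $(\tau(x_1),\dots,\tau(x_\defell))$, but in the $\SS$-basis the component $x_j$ carries a twist by $\rho_{j+1}$, so the $j$-th coordinate is an eigenline whose eigenvalue is governed by how $\tau$ interacts with $\rho_{j+1}$. Working modulo $\Gamma_2$ (i.e.\ on $\Delta_1$), each coordinate direction is one-dimensional over the residue field and $\tau$ acts on it as a scalar; tracking the exponent through the definition $\rho_i(u) = u^{-1}\sigma_i(u)\sigma_{1-i}(u)$ and the normalization of $\SS_{j+1}$ should give eigenvalue $\omega^{k(-2j+1)}$ for $j = 1,\dots,\defell$. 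Substituting the eigenvalue condition $i + k(-n+1) \equiv k(-2j+1)\bmod \defd$ and rearranging gives $i \equiv k(n-2j)\bmod\defd$, which is the final claim.

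The main obstacle will be the bookkeeping in the second step: pinning down the exact eigenvalue $\omega^{k(-2j+1)}$ on the $j$-th coordinate of $\Delta_1$. This requires knowing precisely how $\tau = \sigma^k$ acts on the basis elements $\SS_2,\dots,\SS_{\ell+1}$ and how the twist factors $\rho_{j+1}(u)$ behave under $\tau$, information that comes from the Leedham-Green--McKay construction recalled in Section~\ref{antskelgroups} and the companion reference \cite{DEi17}; the corresponding statement for $h = \defd$ is \cite[Lemma 5.5]{DEi17}, and I expect the general-$h$ eigenvalue computation to follow by restricting that analysis to the subgroup $\langle\tau\rangle$, replacing $\sigma$-eigenvalues $\omega^{-2j+1}$ by their $k$-th powers $\omega^{k(-2j+1)}$, exactly as in the $\sigma$-to-$\tau$ passage already used in the proof of Lemma~\ref{lattice}.
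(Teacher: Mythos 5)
Your overall route coincides with the paper's: reduce to level $1$ via a level-shifting scaling that changes the $(1,\tau)$-eigenvalue by $\omega^{\pm k}$ per step, then get the eigenvalues on $\Delta_1$ from the $h=\defd$ case in \cite{DEi17} and raise them to the $k$-th power (the paper quotes \cite[Lemma 5.8]{DEi17} for the $(1,\sigma)$-eigenvalues $\omega^{-2j+1}$ and concludes from $\tau=\sigma^k$, which is exactly your final step). However, your scaling device fails as stated, and this is a genuine gap. The sets $\Sigma_{n,i}$ and $\Omega_{n,i}$ are defined by the \emph{exact} equation $(1,\tau)(x)=\omega^i x$, whereas $\theta-1$ is a $\tau$-eigenvector only modulo higher terms: writing $\tau(\theta)=\theta^m$, one has $\tau(\theta-1)=(\theta-1)(\theta^{m-1}+\cdots+\theta+1)$, and the unit $\theta^{m-1}+\cdots+\theta+1$ is congruent to $m$ modulo $\p$ but is not a root of unity. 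Consequently multiplication by $(\theta-1)^{n-1}$ does not map $\Sigma_{1,i'}$ into $\Sigma_{n,i}$ for any $i'$, so the ``explicit bijection compatible with \eqref{eqAct}'' you want does not exist with this scaling factor. The missing ingredient---and precisely what the paper uses---is the existence of \emph{exact} eigenvectors at every level: by \cite[Lemma 2.1]{DEi17} there is $a\in\p^{n-1}\setminus\p^n$ with $\sigma(a)=\omega^{n-1}a$, hence $\tau(a)=\omega^{k(n-1)}a$ exactly; since $(1,\tau)(ax)=\tau(a)\,(1,\tau)(x)$, multiplication by $a$ carries an eigenvector $x\in\Delta_1$ with eigenvalue $\omega^{i+k(-n+1)}$ to $ax\in\Omega_{n,i}$, and for the converse one scales by an exact $\tau$-eigenvector in $\p^{-n+1}\setminus\p^{-n+2}$ with eigenvalue $\omega^{k(-n+1)}$. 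With this replacement your first equivalence becomes the paper's argument verbatim.

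Two smaller points. Your aside that descending from level $n$ to level $1$ ``changes the $\p$-level by $\defd(n-1)$ steps, or equivalently involves multiplying by $p^{n-1}$ up to a unit'' contradicts your own (correct) count of $n-1$ shifts: since $\Gamma_n=(\p^{n-1})^{\defell}B^{-1}$, the shift is $n-1$ steps of $\p$, while $p^{n-1}$ has $\p$-valuation $\defd(n-1)$ and maps $\Gamma_1$ onto $\Gamma_{1+\defd(n-1)}$, not onto $\Gamma_n$. Also, the coordinate-by-coordinate bookkeeping you anticipate for the $\Delta_1$-eigenvalues (tracking $\tau$ on $\SS_2,\ldots,\SS_{\defell+1}$ and the twists $\rho_i$) is unnecessary: for $u=1$ the twists in \eqref{eqAct} disappear, so $(1,\tau)=(1,\sigma)^k$, and since $(1,\sigma)$ is diagonalisable on $\Gamma_1$ compatibly with $\Gamma_2$ (Lemma \ref{lattice} in the case $h=\defd$), the $(1,\tau)$-eigenvalues on $\Delta_1$ are exactly the $k$-th powers $\omega^{k(-2j+1)}$, $j=1,\ldots,\defell$, of the known $(1,\sigma)$-eigenvalues; this is all the paper does for the second claim.
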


\begin{proof}
For $h=\defd$ this is \cite[Lemma 5.7]{DEi17}. Let $x \in \Delta_1$ be an eigenvector of $(1,\tau)$ with eigenvalue $\omega^{i+k(-n+1)}$. By \cite[Lemma 2.1]{DEi17}
there exists an eigenvector $a \in \p^{n-1} \setminus \p^n$ of $\sigma$ with eigenvalue $\omega^{n-1}$. Since $\tau=\sigma^k$,  it follows that $a$ is an eigenvector
of $\tau$ with eigenvalue $\omega^{k(n-1)}$. We have $ax \in \Delta_n$ and
\[ (1,\tau)(ax)  = \omega^{k(n-1)} a \omega^{i+k(-n+1)} x
                      = \omega^i (ax),\]
so $ax \in \Omega_{n,i}$. Conversely, if $x \in \Omega_{n,y}$ is an eigenvector of $(1,\tau)$ with eigenvalue $\omega^i$, then choose an eigenvector 
$a \in \p^{-n+1} \setminus \p^{-n+2}$ of $\tau$ with eigenvalue $\omega^{k(-n+1)}$, and observe that $ax \in \Delta_1$ satisfies
\[ (1,\tau)(ax) 
              = \omega^{k(-n+1)} a \omega^i x 
              = \omega^{i+k(-n+1)} ax.\]
Lastly, \cite[Lemma 5.8]{DEi17} shows that the eigenvalues of $(1,\sigma)$ on $\Delta_1$ are $\{ \omega^{-2j+1} \colon j=1,\ldots,\defell \}$, hence the 
final claim follows from $\tau=\sigma^k$.             
\end{proof}

The following proposition summarises our `local-to-global' principle for skeleton groups: every skeleton group at depth $e$ in $\S_p(n)$ can be defined as $C_{n,e}(z)$ where $z\in\Omega_{n,j}$ is some `global fixed point' of $(\omega^{-j},\tau)$ for some~$j$.

\begin{proposition}
\label{equiv2}
If $G$ has depth $e$ in $\S_p(n)$, then the following are equivalent:
\begin{ithm}
\item $G \in \S_p(n)$ with $h \mid \gal(G)$;
\item $G \cong C_{n,e}(x)$ for some  $x \in \Delta_n$ and $u \in \U$ with $(u, \tau)(x) \equiv x \bmod \Gamma_{n+e}$;
\item $G \cong C_{n,e}(y)$ for some  $y \in \Delta_n$ and $v \in \ker(\chi)$ with $(v, \tau)(y) \equiv y \bmod \Gamma_{n+e}$;
\item $G \cong C_{n,e}(z)$ for some  $j \in \{ k(n-2i) \bmod d : i=1,\ldots,\defell \}$  and $z\in \Omega_{n,j}$.
\end{ithm}
\end{proposition}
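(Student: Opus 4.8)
The plan is to prove Proposition \ref{equiv2} as a chain of equivalences, reusing the lemmas just established so that most of the work is already done. The equivalence (a) $\Leftrightarrow$ (b) is simply a restatement of Theorem \ref{isomautpr}c), since (a) asserts $h \mid \gal(G)$ and (b) spells out the local fixed-point condition $(u,\tau)(x)\equiv x \bmod \Gamma_{n+e}$ for some $u\in\U$. The equivalence (b) $\Leftrightarrow$ (c) is exactly the content of Lemma \ref{equiv1}: reducing the stabiliser from a general unit $u\in\U$ to a unit $v$ in $\ker(\chi)$ is achieved by the $(s,1)$-conjugation performed there, which replaces $x$ by an isomorphic representative $y$ at the cost of moving $u$ into $\ker(\chi)$ via the decomposition $\U=\ker(\chi)\times\im(\chi)$. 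So the only genuinely new implications are those connecting (c) and (d), and I would present the proof as verifying (c) $\Rightarrow$ (d) and (d) $\Rightarrow$ (c).

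For (d) $\Rightarrow$ (c), suppose $G\cong C_{n,e}(z)$ with $z\in\Omega_{n,j}$ and $j\in\{k(n-2i)\bmod d : i=1,\ldots,\defell\}$. By definition of $\Omega_{n,j}$ we have $z\in\Delta_n$ and $(1,\tau)(z)=\omega^j z$, hence $(\omega^{-j},1)(1,\tau)(z)=\omega^{-j}\omega^j z = z$. The point is that scalar multiplication by $\omega^{-j}\in\langle\omega\rangle\subseteq\ker(\chi)$ commutes appropriately with the action, so I would observe that $(\omega^{-j},1)$ acts on the tuple $z=(z_1,\ldots,z_\ell)$ diagonally by the scalar $\omega^{-j}$ (this uses that $\rho_i(\omega^{-j})$ is itself a root of unity; one should check that $(\omega^{-j},1)(z)=\omega^{-j}z$, or at least produce a $v\in\ker(\chi)$ fixing $z$). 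Setting $v=\omega^{-j}$, which lies in $\langle\omega\rangle\subseteq\U_\L=\ker(\chi)$, we get $(v,\tau)(z)\equiv z$ exactly (not merely mod $\Gamma_{n+e}$), so taking $y=z$ establishes (c).

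For the converse (c) $\Rightarrow$ (d), I start from $y\in\Delta_n$ and $v\in\ker(\chi)$ with $(v,\tau)(y)\equiv y \bmod \Gamma_{n+e}$ and must produce a genuine eigenvector $z\in\Omega_{n,j}$ with the prescribed eigenvalue index. The natural idea is to use the $\O_\L$-module decomposition $\Gamma_n=\bigoplus_{i\in\II}\Sigma_{n,i}$ of Lemma \ref{lattice}, under which $(v,1)$ preserves each summand (since $v\in\U_\L$), to diagonalise the action of $(v,\tau)$ and extract an eigenvector, and then to invoke Lemma \ref{xiny} to identify the admissible eigenvalue indices as precisely $j\equiv k(n-2i)\bmod d$. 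This is the step I expect to be the main obstacle: the hypothesis only gives a fixed point \emph{modulo} $\Gamma_{n+e}$, whereas $\Omega_{n,j}$ demands an \emph{exact} eigenvector of $(1,\tau)$, and one must absorb the twisting unit $v$ into an honest eigenvector while preserving membership in $\Delta_n$ and the isomorphism type of $G$. I anticipate handling this by projecting $y$ onto the eigenspace summands $\Sigma_{n,i}$, arguing that at least one projection survives modulo $\Gamma_{n+1}$ (so as to stay in $\Delta_n$ rather than $\Gamma_{n+1}$), and then applying Theorem \ref{isomautpr}b) together with a further $\U_\L$-adjustment to replace $y$ by a true eigenvector $z$ without changing the isomorphism class; Lemma \ref{xiny} then pins down the set of possible $j$, completing the cycle of implications.
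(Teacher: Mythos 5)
Your reduction of the proposition to the implications connecting c) and d) is sound and matches the paper's structure: a)$\Leftrightarrow$b)$\Leftrightarrow$c) are indeed just Theorem~\ref{isomautpr}c) and Lemma~\ref{equiv1}, and the paper likewise dismisses the downward implication (it notes d)$\Rightarrow$b) is trivial). In your d)$\Rightarrow$c) there is a small sign slip: by \eqref{eqAct} a unit $u$ acts through $\rho_s(u)^{-1}$, and $\rho_s(\omega^a)=\omega^{-a}\sigma_s(\omega^a)\sigma_{1-s}(\omega^a)=\omega^a$, so $(\omega^{-j},1)(z)=\omega^{j}z$; the element that fixes $z\in\Omega_{n,j}$ exactly is $(\omega^{j},\tau)$, not $(\omega^{-j},\tau)$. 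Since $\omega^{j}\in\ker(\chi)$, this is harmless and you hedged appropriately.

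The genuine gap is in c)$\Rightarrow$d), precisely the step you flag as ``the main obstacle'' and leave as an anticipated plan. The missing idea is the clash between prime-to-$p$ and pro-$p$ orders. The paper's argument: decompose $y=y_{i_1}+\cdots+y_{i_h}$ with $y_i\in\Sigma_{n,i}$ (Lemma~\ref{lattice}) and write $v=\omega^{j}\theta^{q}u$ with $u\in\U_2$, so that $\rho_s(v)=\omega^{j}\rho_s(u)$. Projecting the congruence $(v,\tau)(y)\equiv y\bmod\Gamma_{n+e}$ onto each summand (legitimate because $\Gamma_{n+e}=\bigoplus_i\Sigma_{n+e,i}$ and $(v,\tau)(y_i)\in\Sigma_{n,i}$) gives $(u,1)(y_i)\equiv\omega^{j-i}y_i\bmod\Gamma_{n+e}$. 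Since $\omega^{\defd}=1$, this yields $(u^{\defd},1)(y_i)\equiv y_i$, and because $(u,1)$ acts on $\Gamma_1/\Gamma_{n+e}$ with $p$-power order and $\gcd(\defd,p)=1$, the element $(u,1)$ is a power of $(u^{\defd},1)$ as an operator on this quotient; hence $(u,1)$ fixes $y_i$ modulo $\Gamma_{n+e}$, so $(\omega^{j-i}-1)y_i\in\Gamma_{n+e}$. As $\omega^{j-i}-1$ is a unit whenever $\omega^{j-i}\neq1$, every component except $y_j$ lies in $\Gamma_{n+e}$, whence $y\equiv y_j\bmod\Gamma_{n+e}$, $y_j\in\Delta_n$, and $C_{n,e}(y)=C_{n,e}(y_j)$ with $y_j\in\Omega_{n,j}$ --- no isomorphism adjustment via Theorem~\ref{isomautpr}b) is needed at all; Lemma~\ref{xiny} then restricts $j$. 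Your proposed mechanism does not deliver this: ``at least one projection survives modulo $\Gamma_{n+1}$'' is automatic from $y\in\Delta_n$ but says nothing about agreeing with $y$ modulo $\Gamma_{n+e}$, which is what controls the group $C_{n,e}(\cdot)$; and you give no way for a ``further $\U_\L$-adjustment'' to convert an approximate eigenvector into an exact one. Without the $p$-power order argument killing the off-eigenvalue components modulo $\Gamma_{n+e}$, the proof does not close.
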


\begin{proof}
Parts a,b,c) are Theorem~\ref{isomautpr}c) and Lemma \ref{equiv1} and listed for completeness only. It is trivially true that d) implies b), 
and it remains to show that c) implies d); we proceed with an argument similar to the proof of \cite[Lemma 5.6]{DEi17}.  In the following 
suppose that $G \cong C_{n,e}(y)$ for some  $y \in \Delta_n$ with  $(v, \tau)(y) \equiv y \bmod \Gamma_{n+e}$ for a suitable $v \in \ker(\chi)$. 
Lemma \ref{lattice} implies that we can decompose 
$y = y_{i_1} + \ldots + y_{i_h}$ with each $y_i \in \Sigma_{n,i}$. Write $v = \omega^j \theta^q u$ for some $j,q \in \Z$ and  $u \in \U_2$, 
so that each $\rho_s(v) = \omega^{j} \rho_s(u)$ with $\rho_s(u) \in \U_2$. In the following let $i\in\{i_1,\ldots,i_h\}$. Since $v\in\ker(\chi)$, that is, $\tau(v)=v$, we have
\[
(1,\tau)(v,1)(y_i)= (v, \tau)(y_i)= (v,1)(1,\tau)(y_i)= \omega^i(v,1)(y_i) = \omega^{i-j} (u,1)(y_i).
\]
Note that $(v,1)(y_i)\in \Sigma_{n,i}$, hence $(v,\tau)(y_i)\in \Sigma_{n,i}$, so  the assumption $y-(v,\tau)(y)\in\Gamma_{n+e}$ together with Lemma \ref{lattice} implies that for every $i$ 
we have $y_i-(v,\tau)(y_i)=y_i-\omega^{i-j}(u,1)(y_i)\in \Gamma_{n+e}$, that is,\[(u,1)(y_i)\equiv \omega^{j-i}y_i\bmod \Gamma_{n+e}.\]
This shows that $(u^{\defd},1)(y_i)\equiv y_i\bmod \Gamma_{n+e}$ and, since the action of $(u,1)$ on $\Gamma_1/\Gamma_{n+e}$ has $p$-power order, 
it follows that there is some $m$ such that $(u^{\defd},1)^m$ and $(u,1)$ induce the same action. The former acts trivially on $y_i$ while the 
latter acts by multiplication by $\omega^{j-i}$. Thus, if $\omega^{j-i}\ne 1$, then this forces $y_i\in\Gamma_{n+e}$. In conclusion, 
$y\equiv y_j\bmod \Gamma_{n+e}$, and since $y \in \Delta_n$ we also have $y_j \in \Delta_n$. Thus, $G\cong C_{n,e}(y)=C_{n,e}(y_j)$, and we can take $z=y_j$; 
the claim follows with Lemma \ref{xiny}.
\end{proof}

Theorem \ref{thmGFP} is an immediate consequence of Proposition \ref{equiv2} and allows us to prove Theorem \ref{skelbottom}.

\enlargethispage{2ex}

\begin{proof}[Proof of Theorem \ref{skelbottom}]
  Let $\R$ be a Galois tree in $\S_p(n)$ with Galois order $h$. Recall that $\dep(\S_p(n))=n-\defc$. Let $G$ be a group in $\R$ at depth $e$ in $\S_p(n)$. By Theorem \ref{thmGFP}, we have $G\cong C_{n,e}(x)$ for 
  some $x\in\Delta_n$ with $(1,\tau)(x)=\omega^i x$ for some $i$. The group $H=C_{n,n-\defc}(x)$ is a descendant of $G$ in $\S_p(n)$, hence $\gal(H)$ divides $\gal(G)$. On the other hand, 
  Proposition \ref{equiv2} shows that $\gal(G)$ divides $\gal(H)$. Thus, $H$ is a leaf in $\R$, and every leaf of $\R$ has depth $\dep(\S_p(n))$.
\end{proof}

\section{Ramification of Galois trees}
\label{isorev}
\noindent To describe the structure of Galois trees in more detail, it is required to solve the isomorphism problem of skeleton groups defined by global Galois fixed points; 
see Theorem \ref{thmGFP}. The aim of this section is to give a generalisation of \cite[Lemma 5.11]{DEi17}, and to indicate why it appears to be a lot more difficult to prove 
an explicit structure result for Galois trees that generalises the results for $h=\defd$ in \cite[Theorem~6.2]{DEi17}. We start with a preliminary lemma; as before, $\tau=\sigma^k$ with $d=p-1=hk$.

\begin{lemma}\label{lemFP}
Let $x,y\in\Sigma_{n,i}$ such that $(u,1)(x)\equiv y\bmod \Gamma_{n+e}$ for some $u\in\U$. If $u=vw$ with $v\in\ker(\chi)$ and $w\in\im(\chi)$, 
then $(v,1)(x)\equiv y\bmod \Gamma_{n+e}$. 
\end{lemma}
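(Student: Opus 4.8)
The statement I want to prove is that if $x,y \in \Sigma_{n,i}$ satisfy $(u,1)(x) \equiv y \bmod \Gamma_{n+e}$ for some $u \in \U$, and we factor $u = vw$ with $v \in \ker(\chi)$ and $w \in \im(\chi)$, then the $\ker(\chi)$-component alone already does the job: $(v,1)(x) \equiv y \bmod \Gamma_{n+e}$. The underlying idea is that the $\im(\chi)$-component $w$ must act trivially on the eigenspace $\Sigma_{n,i}$ modulo $\Gamma_{n+e}$, so it can be discarded. My plan is to analyse how $(w,1)$ acts on an eigenvector of $(1,\tau)$ and show that this action is trivial modulo $\Gamma_{n+e}$.

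\textbf{Key steps.}
First I would use the formula \eqref{eqAct} to understand the action of $(u,1) = (vw,1)$ coordinatewise, via the maps $\rho_s$. The crucial observation is that $\rho_s$ is a homomorphism $\U \to \U$ (since each $\sigma_i$ is a field automorphism, $\rho_s(u) = u^{-1}\sigma_s(u)\sigma_{1-s}(u)$ is multiplicative in $u$), so $\rho_s(vw) = \rho_s(v)\rho_s(w)$ and hence $(vw,1) = (v,1)(w,1)$ as operators. Thus it suffices to prove that $(w,1)(x) \equiv x \bmod \Gamma_{n+e}$ for $x \in \Sigma_{n,i}$, since then $(u,1)(x) = (v,1)(w,1)(x) \equiv (v,1)(x)$ (note $(v,1)$ preserves $\Sigma_{n,i}$ and acts $\O_\L$-linearly on $\Gamma_n/\Gamma_{n+e}$ by Lemma \ref{lattice}, so it respects the congruence). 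Second, I would exploit that $w \in \im(\chi)$ means $w = s\tau(s)^{-1}$ for some $s \in \U$; the twisted conjugation identity already used in the proof of Lemma \ref{equiv1} gives $(w,\tau) = (s,1)(1,\tau)(s,1)^{-1}$, equivalently $(w,1) = (s,1)(1,\tau)(s,1)^{-1}(1,\tau)^{-1}$. Applying this to $x \in \Sigma_{n,i}$, where $(1,\tau)$ scales by $\omega^i$, the two $(1,\tau)^{\pm 1}$ factors contribute compensating eigenvalue scalars $\omega^{\pm i}$ that cancel, leaving $(w,1)(x) = (s,1)(1,\tau)(s,1)^{-1}(1,\tau)^{-1}(x)$; since $\omega^i$ is a scalar it commutes through, and the net effect is conjugation of the scalar action by $(s,1)$, which is trivial on a scalar. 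This pins down $(w,1)(x) \equiv x$.

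\textbf{Main obstacle.}
The step I expect to require the most care is the eigenvalue bookkeeping in the second step: the operators $(s,1)$ and $(1,\tau)$ do not commute in general, and I must track exactly how $(1,\tau)$ moves an element in and out of the eigenspace $\Sigma_{n,i}$ so that the $\omega^{\pm i}$ scalars genuinely cancel rather than merely appearing to. Concretely, $(s,1)^{-1}(x)$ need not lie in $\Sigma_{n,i}$, so $(1,\tau)$ applied to it is not simply multiplication by $\omega^i$; I would decompose $(s,1)^{-1}(x)$ into its $\Sigma_{n,j}$-components via Lemma \ref{lattice} and check the cancellation componentwise, or else argue more robustly that $(w,1)$ has $p$-power order on $\Gamma_n/\Gamma_{n+e}$ while the relation $(u,1)(x) \equiv \omega^{j-i} y$-type constraints force its eigenvalue on the relevant component to be a $\defd$-th root of unity, hence $1$. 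This mirrors the order argument in the proof of Proposition \ref{equiv2}, where the $p$-power order of $(u,1)$ combined with a root-of-unity scalar forces triviality, and I would reuse that mechanism here as the safest route to the cancellation.
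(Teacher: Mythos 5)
Your reduction in the first step is sound: since each $\rho_s$ is multiplicative, $(u,1)=(v,1)(w,1)$ as operators, $(v,1)$ preserves $\Gamma_{n+e}$, and so the lemma is equivalent to showing $(w,1)(x)\equiv x\bmod\Gamma_{n+e}$. The genuine gap is in your second step, where you try to establish this \emph{unconditionally}, i.e.\ for every $x\in\Sigma_{n,i}$ and every $w\in\im(\chi)$, without using the hypothesis $(u,1)(x)\equiv y\bmod\Gamma_{n+e}$ at all. That unconditional statement is false, and your eigenvalue cancellation is in fact circular: carrying out the computation correctly, $(1,\tau)^{-1}(x)=\omega^{-i}x$, then $(1,\tau)(s^{-1},1)=(\tau(s)^{-1},1)(1,\tau)$, so
\[
(s,1)(1,\tau)(s,1)^{-1}(1,\tau)^{-1}(x)\;=\;\omega^{-i}(s,1)(\tau(s)^{-1},1)(1,\tau)(x)\;=\;(s\tau(s)^{-1},1)(x)\;=\;(w,1)(x),
\]
i.e.\ the scalars $\omega^{\pm i}$ do cancel, but what remains is $(w,1)(x)=(w,1)(x)$, a tautology, not $(w,1)(x)=x$. (Your own "main obstacle" paragraph correctly identifies why: $(s,1)^{-1}(x)$ leaves the eigenspace $\Sigma_{n,i}$, so conjugation by $(s,1)$ is not conjugation of a scalar.) Indeed, for generic $w=\chi(s)$ with $s\in\U_2$ one has $\rho_j(w)^{-1}-1\in\p^2\setminus\p^3$, so $(w,1)(x)-x\notin\Gamma_{n+e}$ as soon as $e$ is moderately large; the conclusion really does depend on the hypothesis, and neither of your fallbacks supplies a correct way to use it. Your second fallback, mirroring Proposition \ref{equiv2}, does not apply because the hypothesis never says that $(w,1)$ maps $x$ to a scalar multiple of itself: it relates $x$ to a \emph{different} vector $y$, so there is no "eigenvalue on the relevant component" to force to be a root of unity.

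What is actually needed, and what the paper does, is the following. Applying $(1,\tau)$ to the congruence and using that $x,y$ and $(u,1)(x)$ all lie in $\Sigma_{n,i}$ gives $y\equiv(\tau(u),1)(x)\equiv(\chi(u)^{-1},1)(u,1)(x)\equiv(\chi(u)^{-1},1)(y)\bmod\Gamma_{n+e}$; hence $(\chi(u),1)=(\chi(w),1)$ fixes $y$, and by symmetry $x$, modulo $\Gamma_{n+e}$. This cannot be upgraded to "$(w,1)$ fixes $x$" by a coprime-order trick, since $\chi(w)$ is not a power of $w$. Instead one iterates (the same argument applied to the fixed-point congruence shows $(\chi^m(w),1)$ fixes $x$ for all $m$) and then invokes a finiteness argument: choosing $j$ with $\U_j=1+\p^j$ acting trivially on $\Gamma_n/\Gamma_{n+e}$, the map $\chi$ induces an automorphism of the finite group $J=\im(\chi)/\im(\chi|_{\U_j})$, so some power $\chi^m$ is the identity on $J$, giving $\chi^m(w)=ws$ with $s\in\U_j$; since $(\chi^m(w),1)$ fixes $x$ and $(s,1)$ acts trivially, $(w,1)$ fixes $x$ modulo $\Gamma_{n+e}$, and the lemma follows. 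This iteration-plus-finiteness mechanism is the key idea missing from your proposal.
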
 
\begin{proof}
Since $y$ and $(u,1)(x)$ lie in $\Sigma_{n,i}$, it follows from $y\equiv(u,1)(x)\bmod \Gamma_{n+e}$ that
\[\omega^i y\equiv \omega^i (\tau(u),1)(x)\equiv \omega^i (\chi(u)^{-1},1)(u,1)(x)\equiv \omega^i(\chi(u)^{-1},1)(y)\bmod \Gamma_{n+e},\]
hence $(\chi(u),1)$ fixes $y$ modulo $\Gamma_{n+e}$ and, by symmetry, also $x$ modulo $\Gamma_{n+e}$. Note that $\chi(u)=\chi(w)$, so also $(\chi(w),1)$ 
fixes both $x$ and $y$ modulo $\Gamma_{n+e}$. Now iterate the argument: since $(\chi(w),1)$ fixes $x$ modulo $\Gamma_{n+e}$, it follows that $(\chi^2(w),1)$ 
fixes $x$ modulo $\Gamma_{n+e}$ and, by induction, $(\chi^m(w),1)$ does the same  for all $m\in \N$. Let $j$ be large enough such that 
$\U_j=1+\p^j$ acts trivially on $\Gamma_n/\Gamma_{n+e}$. Similar to the decomposition of $\U$, we have $\U_j=\ker(\chi|_{\U_j})\times \im(\chi|_{\U_j})$. 
Since $\U=\ker(\chi) \times \im(\chi)$, it follows that $\chi$ induces an automorphism of $J=\im(\chi)/\im(\chi|_{\U_j})$, denoted $\psi$. Note that $J$ is finite since 
it is a quotient of $\im(\chi)/\im(\chi)^{p^r}$ for some $r\geq 1$ and the latter group is finite because it is finitely generated abelian with finite exponent.
It follows that $\psi$ has finite order $m\in\N$. This shows that $\psi^m(\hat{w})=\hat{w}$ for the coset $\hat{w}=w\im(\chi|_{\U_j})\in J$,  that is, $\chi^m(w)=ws$ 
for some $s\in \U_j$. Since $(s,1)$ and $(\chi^m(w),1)=(ws,1)$ fix $x$ modulo $\Gamma_{n+e}$, it follows that $(w,1)$ fixes $x$ modulo $\Gamma_{n+e}$. 
The claim follows.
\end{proof}

\enlargethispage{4ex}
\begin{proposition} \label{propIsoGT}
Let $x\in\Omega_{n,i}$ and $y\in\Omega_{n,j}$. Then $C_{n,e}(x) \cong C_{n,e}(y)$ if and only if $i = j$ and $(v,\sigma^m)(x)\equiv y \bmod \Gamma_{n+e}$ for some $v\in \ker(\chi)$ and $m\in\{0,\ldots,k-1\}$.
\end{proposition}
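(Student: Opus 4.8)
The backward implication is immediate: if $i=j$ and $(v,\sigma^m)(x)\equiv y\bmod\Gamma_{n+e}$ for some $v\in\ker(\chi)$ and $m\in\{0,\ldots,k-1\}$, then $(v,\sigma^m)\in\U\split\G$ witnesses $C_{n,e}(x)\cong C_{n,e}(y)$ by Theorem~\ref{isomautpr}b). All the work is in the forward implication, for which the plan is to take an arbitrary witness $(u,\varphi)\in\U\split\G$ of the isomorphism and massage it into the required normal form $(v,\sigma^m)$, extracting the equality $i=j$ along the way. Throughout I assume $e\geq 1$, so that $\Gamma_{n+e}\subseteq\Gamma_{n+1}$ and hence $y\in\Delta_n$ is nonzero modulo $\Gamma_{n+e}$.

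By Theorem~\ref{isomautpr}b) there are $u\in\U$ and $m'\in\{0,\ldots,\defd-1\}$ with $(u,\sigma^{m'})(x)\equiv y\bmod\Gamma_{n+e}$. The first step is to reduce the Galois part modulo $k$. Writing $m'=m+kt$ with $m\in\{0,\ldots,k-1\}$ and using $\tau=\sigma^k$, the eigenvector identity $(1,\tau^t)(x)=\omega^{it}x$ gives $(u,\sigma^{m'})(x)=\omega^{it}(u,\sigma^m)(x)$. A short computation with \eqref{eqAct} shows that for a root of unity $\omega^c\in\Z_p$ one has $\rho_s(\omega^c)=\omega^c$ for all $s$, so $(\omega^c,1)$ acts as scalar multiplication by $\omega^{-c}$; this lets me absorb the factor $\omega^{it}$ into the unit component and replace $u$ by $u'=\omega^{-it}u$, giving $(u',\sigma^m)(x)\equiv y\bmod\Gamma_{n+e}$. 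Setting $x'=(1,\sigma^m)(x)$, commutativity of $\G$ together with $(1,\tau)(x)=\omega^i x$ shows $x'\in\Omega_{n,i}$, and the relation becomes $(u',1)(x')\equiv y\bmod\Gamma_{n+e}$.

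The heart of the argument is then to show $i=j$. For this I would exploit that $\tau$ commutes with every element of $\G$ to obtain the identity $(1,\tau)(u',1)=(\chi(u')^{-1},1)(u',1)(1,\tau)$ in $\U\split\G$, where $\chi(u')^{-1}=\tau(u')u'^{-1}$. Applying both sides to $x'$ and using $(1,\tau)(x')=\omega^i x'$, $(u',1)(x')\equiv y$, and $(1,\tau)(y)=\omega^j y$ yields $(\chi(u')^{-1},1)(y)\equiv\omega^{j-i}y\bmod\Gamma_{n+e}$. Now $\chi(u')^{-1}\in\im(\chi)$, and since $\langle\omega\rangle\subseteq\ker(\chi)$ the group $\im(\chi)\cong\U/\ker(\chi)$ is a quotient of $\U/\langle\omega\rangle\cong\langle\theta\rangle\times\U_2$ and hence pro-$p$; therefore $(\chi(u')^{-1},1)$ acts on the finite abelian $p$-group $\Gamma_n/\Gamma_{n+e}$ with order $p^r$ for some $r$. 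Iterating the eigenvalue relation $p^r$ times gives $\omega^{p^r(j-i)}y\equiv y\bmod\Gamma_{n+e}$, and since $y$ is nonzero modulo $\Gamma_{n+e}$ while $\omega^s-1$ is a unit of $\Z_p$ whenever $\omega^s\neq 1$, this forces $\omega^{p^r(j-i)}=1$. As $\gcd(p^r,\defd)=1$, we conclude $\omega^{j-i}=1$, that is, $i=j$.

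With $i=j$ in hand the final step is routine: $x'$ and $y$ now lie in the same lattice $\Sigma_{n,i}$ and satisfy $(u',1)(x')\equiv y\bmod\Gamma_{n+e}$, so Lemma~\ref{lemFP} applies and, writing $u'=vw$ with $v\in\ker(\chi)$ and $w\in\im(\chi)$, gives $(v,1)(x')\equiv y\bmod\Gamma_{n+e}$. Unwinding $x'=(1,\sigma^m)(x)$ yields $(v,\sigma^m)(x)\equiv y\bmod\Gamma_{n+e}$ with $v\in\ker(\chi)$ and $m\in\{0,\ldots,k-1\}$, as required. I expect the main obstacle to be the step establishing $i=j$: it is where the commutation identity, the pro-$p$ structure of $\im(\chi)$, and the coprimality $\gcd(p^r,\defd)=1$ must all be combined correctly, and where one must keep careful track of the fact that $y$ is genuinely nonzero modulo $\Gamma_{n+e}$.
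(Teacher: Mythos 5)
Your proof is correct and follows essentially the same route as the paper's: the same commutation identity yielding $(\chi(u)^{-1},1)(y)\equiv\omega^{j-i}y\bmod\Gamma_{n+e}$, the same observation that this element acts with $p$-power order (forcing $\omega^{j-i}=1$), the same absorption of powers of $\tau$ into the unit component to normalise the Galois part to $\sigma^m$ with $m\in\{0,\ldots,k-1\}$, and the same final appeal to Lemma~\ref{lemFP}. The only differences are cosmetic: you normalise the Galois part before establishing $i=j$ rather than after, and you are more explicit than the paper about needing $y\not\equiv 0\bmod\Gamma_{n+e}$ (i.e.\ $e\geq 1$) in the step that forces $i=j$.
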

\begin{proof} 
By Theorem~\ref{isomautpr}b) the groups are isomorphic, if and only if there exists $(u,\varphi)\in \U\split \G$ with \[(u,\varphi)(x)\equiv y\bmod \Gamma_{n+e}.\]
Now observe that modulo $\Gamma_{n+e}$, we have
\begin{eqnarray*}\omega^j y &\equiv & (1,\tau)(y)\equiv (1,\tau)(u,\varphi)(x) \\&\equiv&\omega^i (\tau(u),\varphi)(x)\equiv \omega^i (\chi(u)^{-1},1)(u,\varphi)(x)\\&\equiv& \omega^i (\chi(u)^{-1},1)(y),
\end{eqnarray*}
that is, $(\chi(u)^{-1},1)(y)\equiv \omega^{j-i} y \bmod \Gamma_{n+e}$. Since the image of $\chi$ is contained in $\langle\theta\rangle\times \U_2$, the action on $\Gamma_1/\Gamma_{n+e}$ 
induced by $(\chi(u)^{-1},1)$ has $p$-power order. This forces $\omega^{j-i}=1$, so 
$j=i$, as claimed.

Recall that $\tau=\sigma^k$ and write $\varphi=\sigma^r$ with $r=qk+m$ where $m\in\{0,\ldots,k-1\}$. This shows that $(1,\varphi)(x)=(1,\sigma^m)(1,\tau^q)(x)=\omega^{qi}(1,\sigma^m)(x)$, 
which proves that 
\[y\equiv (u,\varphi)(x)\equiv \omega^{qi}(u,1)(1,\sigma^m)(x)\equiv (u\omega^{-qi},\sigma^m)(x)\bmod \Gamma_{n+e}.\]
Thus, without loss of generality, $\varphi=\sigma^m$ with $m\in\{0,\ldots,k-1\}$. Note that $(1,\sigma^m)(x) \in \Sigma_{n,i}$, and  $(u,1)$ maps $(1,\sigma^m)(x)$ 
to $y$ modulo $\Gamma_{n+e}$.  Writing $u=vw$ with $v\in\ker(\chi)$ and $w\in\im(\chi)$, 
it follows from Lemma \ref{lemFP} that $(v,1)$ maps $(1,\sigma^m)(x)$ to $y$ modulo $\Gamma_{n+e}$. The claim follows.
\end{proof}

\begin{corollary}\label{corSamey} 
If $\mathcal{R}$ is a Galois tree in $\S_p(n)$ with Galois order $h$, then all groups in $\mathcal{R}$ can be defined by elements in $\Omega_{n,i}$ for a unique  
$i\in \{k(n-2j) \bmod \defd: j\in\{1,\ldots,\defell\}\}$.
\end{corollary}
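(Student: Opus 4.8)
The plan is to show that all groups in a Galois tree $\R$ arise from a \emph{single} residue $i$ in the admissible set, and to do this by combining the descendant relation inside $\R$ with the isomorphism criterion of Proposition~\ref{propIsoGT}. First I would fix notation: by Theorem~\ref{thmGFP} together with Proposition~\ref{equiv2}d), every group $G$ in $\R$ (which has Galois order exactly $h$, so in particular $h\mid\gal(G)$) can be written as $C_{n,e}(x)$ for some $x\in\Omega_{n,j}$ with $j\in\{k(n-2s)\bmod\defd : s=1,\ldots,\defell\}$. So the content is that the residue $j$ is the \emph{same} for all members of $\R$; the existence of \emph{some} admissible $j$ is already given.

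The key step is to propagate the residue along the edges of $\R$. A Galois tree is a connected subtree of $\S_p(n)$, so it suffices to show that two groups joined by an edge in $\R$ share the same residue $j$; the claim then follows by connectedness. Let $G\to G'$ be an edge in $\R$, with $G$ at depth $e$ and $G'$ at depth $e+1$, both of Galois order $h$. Writing $G\cong C_{n,e}(x)$ with $x\in\Omega_{n,i}$ and $G'\cong C_{n,e+1}(x')$ with $x'\in\Omega_{n,i'}$, the descendant relation means (after adjusting the representative, using Theorem~\ref{isomautpr}b)) that $x$ and $x'$ have the same image modulo $\Gamma_{n+e}$ — informally, $G'$ refines $G$ to one deeper level. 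Since $x'\in\Sigma_{n,i'}$ satisfies $(1,\tau)(x')=\omega^{i'}x'$, reducing modulo $\Gamma_{n+e}$ forces $(1,\tau)(x)\equiv\omega^{i'}x$; but $x\in\Sigma_{n,i}$ gives $(1,\tau)(x)=\omega^i x$. The comparison $\omega^i x\equiv\omega^{i'}x\bmod\Gamma_{n+e}$, with $x\in\Delta_n$ (so $x\notin\Gamma_{n+1}$), yields $\omega^{i-i'}\equiv 1$ on the relevant component, hence $i\equiv i'\bmod\defd$, i.e.\ $i=i'$ as residues. This is essentially the same mechanism used in the first paragraph of the proof of Proposition~\ref{propIsoGT}, where the $p$-power-order obstruction forces an $\omega$-power to be trivial.

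The main obstacle I expect is handling the reduction map between the two levels cleanly: a descendant in $\S_p(n)$ corresponds to passing from $C_{n,e}$ to $C_{n,e+1}$ (deepening the quotient), and I must be careful that the representatives $x,x'$ can be chosen compatibly so that $x'\equiv x\bmod\Gamma_{n+e}$ holds on the nose rather than only up to the $\U\rtimes\G$-action. I would argue that the edge relation, unwound through Theorem~\ref{isomautpr}b) at the two consecutive moduli $\Gamma_{n+e}$ and $\Gamma_{n+e+1}$, lets me replace $x'$ by an equivalent global fixed point whose reduction modulo $\Gamma_{n+e}$ equals $x$; this is where a short lattice argument using Lemma~\ref{lattice} (that each $\Sigma_{n,i}$ is a direct summand, so the residue of a vector is detected componentwise and is stable under reduction) pins down $i'=i$. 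Once the residue is shown invariant across a single edge, I conclude: the residue $j$ is constant on the connected tree $\R$, and its uniqueness as an element of $\{k(n-2s)\bmod\defd : s=1,\ldots,\defell\}$ follows because the admissible residues are pairwise distinct modulo $\defd$ for $p\geq 7$, as recorded in Lemma~\ref{xiny}.
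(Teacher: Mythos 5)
Your overall strategy is the paper's own: express the groups of $\R$ as global fixed points via Theorem \ref{thmGFP}, use the fact that the depth-$e$ ancestor of $C_{n,e+m}(y)$ is $C_{n,e}(y)$, and let Proposition \ref{propIsoGT} force the residues to agree (the paper compares the root directly with an arbitrary group of $\R$ rather than working edge-by-edge, but that difference is cosmetic). However, your middle paragraph has a real defect: if you adjust $x'$ by a general $(u,\varphi)\in\U\rtimes\G$ supplied by Theorem \ref{isomautpr}b), the adjusted element is no longer an eigenvector of $(1,\tau)$ --- one computes $(1,\tau)\bigl((u,\varphi)(x')\bigr)=\omega^{i'}(\chi(u)^{-1},1)\bigl((u,\varphi)(x')\bigr)$ --- so your step ``reducing modulo $\Gamma_{n+e}$ forces $(1,\tau)(x)\equiv\omega^{i'}x$'' does not follow. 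Repairing this requires exactly the $\ker(\chi)$-refinement and $p$-power-order argument that constitute Proposition \ref{propIsoGT}, whose statement already contains the conclusion $i=i'$. So the whole adjustment-and-eigenvalue-comparison detour is both unsound as written and unnecessary: once you observe that the parent of $G'=C_{n,e+1}(x')$ is $C_{n,e}(x')$, hence $C_{n,e}(x')\cong C_{n,e}(x)$, citing Proposition \ref{propIsoGT} finishes the edge step (and, applied to the root versus an arbitrary group of $\R$, the whole corollary).

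The second defect is your justification of uniqueness. The residues $k(n-2j)\bmod\defd$ for $j=1,\ldots,\defell$ are \emph{not} pairwise distinct in general: $k(n-2j)\equiv k(n-2j')\bmod\defd$ exactly when $h\mid 2(j-j')$, so for instance when $h\leq 2$ the whole admissible set collapses to a single residue; and Lemma \ref{xiny} records no distinctness statement. Moreover, distinctness of the admissible residues is not what uniqueness means here; what is needed is that a single isomorphism type cannot be defined both by an element of $\Omega_{n,i}$ and by an element of $\Omega_{n,i'}$ with $i\neq i'$ --- and that is, once more, precisely the ``$i=j$'' clause of Proposition \ref{propIsoGT}. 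Both defects are repairable with the proposition you already invoke, but as written these two steps are incorrect.
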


\begin{proof}
Let $G$ be the root of $\mathcal{R}$ and let $H$ be an $m$-step descendant of $G$ in $\mathcal{R}$. By Theorem \ref{thmGFP}, we can define 
$G=C_{n,e}(x)$ and $H=C_{n,e+m}(y)$ for some $x\in \Omega_{n,i_1}$ and $y\in\Omega_{n,i_2}$. Since $H$ is a descendant of $G$, 
we have $C_{n,e}(x)\cong C_{n,e}(y)$, and now Proposition \ref{propIsoGT} implies that the claim is true for $i=i_1=i_2$. It follows 
from Lemma \ref{xiny} that $i \in \{k(n-2j) \bmod \defd: j \in \{1,\ldots,\defell\}\}$.
\end{proof}

We say a Galois tree $\mathcal{R}$ in $\S_p(n)$ has \emph{type} $t$ if the groups in $\mathcal{R}$ are all defined by elements in $\Omega_{n,t}$, cf.\ Corollary \ref{corSamey}. 
The next lemma gives a necessary condition for $e$ to be a ramification level; the example below illustrates that this condition is not sufficient.

\begin{lemma}\label{lemRam} 
Let $\mathcal{R}$ be a Galois tree in $\S_p(n)$ of Galois order $h$ and type $t$. If $\mathcal{R}$ has ramification level $e$, 
then \[e\bmod h \in \{ 2(j-i)\bmod h : i \in \mathcal{I}_t, j\in\{1,\ldots,\defell\} \},\]
where $\mathcal{I}_t=\{i\in\{1,\ldots,\defell\} : t\equiv k(n-2i)\bmod \defd\}$.  
\end{lemma}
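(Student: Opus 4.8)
The plan is to show that ramification at level $e$ forces a certain ``fibre'' eigenspace to be nonzero, and then to read off the claimed congruence from the known $(1,\tau)$-eigenvalues on the quotients $\Gamma_m/\Gamma_{m+1}$. Suppose $\R$ has ramification level $e$, and let $G$ be a group at depth $e$ in $\R$ with at least two immediate descendants in $\R$. By Theorem \ref{thmGFP} and Corollary \ref{corSamey} I may write $G=C_{n,e}(x)$ with $x\in\Omega_{n,t}$, and every immediate descendant of $G$ in $\R$ is of the form $C_{n,e+1}(z)$ with $z\in\Omega_{n,t}$ whose parent $C_{n,e}(z)$ is isomorphic to $G$.

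First I would normalise these descendants. By Proposition \ref{propIsoGT} applied at level $e$ there is $(v,\sigma^m)$ with $v\in\ker(\chi)$ such that $(v,\sigma^m)(x)\equiv z\bmod \Gamma_{n+e}$; since $v\in\U_\L$ commutes with $(1,\tau)$ and $\sigma^m$ fixes $\omega$, the map $(v,\sigma^m)$ preserves $\Omega_{n,t}$, and being a global isomorphism it does not change the isomorphism type of $C_{n,e+1}(z)$. Replacing $z$ by $(v,\sigma^m)^{-1}(z)$ I may thus assume $z\equiv x\bmod\Gamma_{n+e}$, so that $z-x\in\Gamma_{n+e}\cap\Sigma_{n,t}=\Sigma_{n+e,t}$. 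Since $C_{n,e+1}(z)$ depends only on $z$ modulo $\Gamma_{n+e+1}$, it depends only on the class of $z-x$ in $V:=\Sigma_{n+e,t}/\Sigma_{n+e+1,t}$; hence distinct immediate descendants give distinct classes in $V$, and the number of immediate descendants of $G$ is at most $|V|$. As $G$ has at least two, this yields $V\neq 0$.

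It remains to convert $V\neq0$ into the congruence. By Lemma \ref{lattice} the decomposition $\Gamma_{n+e}/\Gamma_{n+e+1}=\bigoplus_{i\in\II}\Sigma_{n+e,i}/\Sigma_{n+e+1,i}$ is exactly the eigenspace decomposition of $(1,\tau)$, so $V$ is its $\omega^t$-eigenspace. Multiplying by an eigenvector $a\in\p^{n+e-1}\setminus\p^{n+e}$ of $\sigma$ with eigenvalue $\omega^{n+e-1}$ induces an eigenvalue-shifting isomorphism $\Gamma_1/\Gamma_2\to\Gamma_{n+e}/\Gamma_{n+e+1}$ (exactly as in the proof of Lemma \ref{xiny}); combined with the eigenvalues $\{\omega^{-2j+1}:j=1,\ldots,\defell\}$ of $(1,\sigma)$ on $\Delta_1$ and $\tau=\sigma^k$, the $(1,\tau)$-eigenvalues on $\Gamma_{n+e}/\Gamma_{n+e+1}$ are $\{\omega^{k(n+e-2j)}:j=1,\ldots,\defell\}$. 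Thus $V\neq0$ forces $t\equiv k(n+e-2j)\bmod\defd$ for some $j\in\{1,\ldots,\defell\}$. Since $\R$ has type $t$, the set $\mathcal{I}_t$ is nonempty, and for any $i\in\mathcal{I}_t$ we have $t\equiv k(n-2i)\bmod\defd$; subtracting gives $k\bigl(e-2(j-i)\bigr)\equiv0\bmod\defd$, and as $\defd=hk$ this is $e\equiv2(j-i)\bmod h$, which is the assertion.

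The main obstacle I anticipate is the bookkeeping of the second paragraph: justifying cleanly that immediate descendants in $\R$ are faithfully parametrised by classes in $V$, so that ``more than one descendant'' genuinely forces $V\neq0$; in particular one must check that the normalising element $(v,\sigma^m)$ stays inside the correct eigenlattice $\Omega_{n,t}$ and preserves the depth-$(e+1)$ isomorphism type. Once this fibre picture is in place, the eigenvalue count and the divisibility $\defd=hk$ make the remaining congruence purely mechanical. Note finally that the argument only produces a necessary condition, since $V\neq0$ does not by itself guarantee two distinct stabiliser-orbits on $x+V$; this is the source of the insufficiency flagged after the statement.
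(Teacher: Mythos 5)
Your proposal is correct and takes essentially the same approach as the paper's proof: both use Theorem \ref{thmGFP}, Corollary \ref{corSamey} and Proposition \ref{propIsoGT} to normalise the defining vectors so that differences land in $\Sigma_{n+e,t}$, deduce that $\Omega_{n+e,t}\neq\emptyset$ (your statement $V\neq 0$ is exactly this), and then apply Lemma \ref{xiny} at levels $n$ and $n+e$ together with $\defd=hk$ to extract the congruence. The only cosmetic difference is that you compare each descendant to the parent's vector $x$ and count classes in $\Sigma_{n+e,t}/\Sigma_{n+e+1,t}$, whereas the paper compares two sibling descendants directly and produces an element $y\in\Omega_{n+e,t}$.
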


\begin{proof} 
\noindent Let $G_1$ and $G_2$ be distinct groups in $\mathcal{R}$ with the same immediate ancestor in $\mathcal{R}$; suppose $G_1$ and $G_2$ have depth $e+1$ in $\S_p(n)$. 
Theorem \ref{thmGFP} and Corollary \ref{corSamey} show that we can define each $G_i=C_{n,e+1}(x_i)$ for some $x_i\in \Omega_{n,t}$. Since $C_{n,e}(x_1)\cong C_{n,e}(x_2)$ 
is the isomorphism type of the parent of $G_1$ and $G_2$, Proposition \ref{propIsoGT} shows  that there is $v\in\ker(\chi)$ and $m\in\{0,\ldots,k-1\}$ with $(v,\sigma^m)(x_1)\equiv x_2\bmod \Gamma_{n+e}$, 
that is, $(v,\sigma^m)(x_1)=x_2 + y$ for some $y\in\Gamma_{n+e}$.  It follows from the definition of $\Sigma_{n,t}$ and the definition of $\chi$ that $(v,\sigma^m)(x_1)\in \Sigma_{n,t}$, 
and so $y=(v,\sigma^m)(x_1)-x_2 \in \Sigma_{n+e,t}$. Since $x_1$ and $(v,\sigma^m)(x_1)$ define isomorphic groups, we can assume, without loss of generality, that $x_1=x_2+y$ with $x_2\in\Omega_{n,t}$ 
and $y\in\Sigma_{n+e,t}$. Since $x_1$ and $x_2$ define non-isomorphic groups at depth $e+1$, we have in fact $y\in\Omega_{n+e,t}$. Since $x_1\in\Omega_{n,t}$, 
Lemma \ref{xiny} shows that  $t\equiv k(n-2i)\bmod \defd$ for some $i\in\{1,\ldots,\defell\}$.  Similarly, $y\in\Omega_{n+e,t}$ implies that $t\equiv k(n+e-2j)\bmod \defd$ 
for some $j\in\{1,\ldots,\defell\}$. Together, $h$ must divide $2(j-i)-e$, so  $e\equiv 2(j-i)\bmod h$, as claimed.
\end{proof}
 
\begin{example} Let  $p=7$ and $n=24$, and consider $h=6$ and $k=1$. By Lemma \ref{xiny} the possible types of Galois trees are $t\in\{2,4\}$.
 If $t=2$, then $j=2$, and a ramification level $e$ is only possible if $e\bmod 6 \in \{0,4\}$. If $t=4$, then $j=1$, and a ramification level $e$ can only occur for $e\bmod 6 \in \{0,2\}$.  
 However, \cite[Theorem~1]{DEi17} shows that the actual ramification levels are at depth $e$ with $e\bmod 6=4$ and $e\bmod 6=2$, respectively, see also Figure \ref{fignondistreg}.
\end{example}

\enlargethispage{4ex}

We now attempt to describe, up to isomorphism, the immediate descendants of a group in a Galois tree. For this we require the following set-up. Let $\mathcal{R}$ be a Galois tree in $\S_p(n)$ 
with Galois order $h$ and type $t$. Let $G=C_{n,e}(x)$ with $x\in\Omega_{n,t}$ be a group in $\mathcal{R}$, and assume that $e < \dep(\S_p(n))$. The proof of Lemma~\ref{lemRam} shows 
that the immediate descendants of $G$ in $\mathcal{R}$ are $C_{n,e+1}(x+y)$ where $y\in\Sigma_{n+e,t}$. Indeed, for each such $y$, the group $H=C_{n,e+1}(x+y)$ is an 
immediate descendant of $G$ and so $\gal(H)$ divides $h$; on the other hand, $x+y\in \Sigma_{n,t}$, so $h\mid \gal(H)$, hence $\gal(H)=h$ and $H\in \mathcal{R}$. If $y,z\in\Sigma_{n+e,t}$, 
then Proposition \ref{propIsoGT} shows that $C_{n,e+1}(x+y)\cong  C_{n,e+1}(x+z)$ if and only if there is some\footnote{Proposition \ref{propIsoGT} states $v\in\ker(\chi)$, but we can extend this 
to $\U$ by Theorem~\ref{isomautpr}; this is useful for applying  
\cite[Theorem~5.1]{Die10}.} $v\in\U$ and $m\in\{0,\ldots,k-1\}$ with $(v,\sigma^m)(x+y)\equiv x+z\bmod \Gamma_{n+e+1}$.  Since $y,z\in\Sigma_{n+e,t}$, the latter implies 
$(v,\sigma^m)(x)\equiv x\bmod \Gamma_{n+e}$, which, together with Theorem~\ref{isomautpr}c), shows that $|\sigma^m|$ divides $h=|\sigma^k|$, the Galois order of $G$. Thus, we 
have $\sigma^m\in \langle \sigma^k\rangle$, which forces $k\mid m$, and as $m\in \{0,\ldots,k-1\}$ we conclude that $m=0$. Thus, $C_{n,e+1}(x+y)\cong  C_{n,e+1}(x+z)$ if and only if 
there is some $v\in{\rm Stab}_{\U}(x+\Gamma_{n+e})$ with $(v,1)(x)-x +(v,1)(y)\equiv z\bmod \Gamma_{n+e+1}$. Motivated by this, we define 
\[\mathcal{Z}(n,e,t)=\{y + \Gamma_{n+e+1} : y\in\Sigma_{n+e,t} \} \cong \Sigma_{n+e,t}/\Sigma_{n+e+1,t}\]
and let $u\in {\rm Stab}_{\U}(x+\Gamma_{n+e})$ act affinely on $y+\Gamma_{n+e+1}\in \Gamma_{n+e}/\Gamma_{n+e+1}$ via
\begin{eqnarray}\label{eqaffine} u\cdot (y+\Gamma_{n+e+1})= (u,1)(x)-x + (u,1)(y)+\Gamma_{n+e+1}.
\end{eqnarray} It follows that the immediate descendants of $G$ are, up to isomorphism,
parametrised by the affine\linebreak ${\rm Stab}_{\U}(x+\Gamma_{n+e})$-orbits in $\mathcal{Z}(n,e,t)$; however, it is important to note that $\mathcal{Z}(n,e,t)$ is in general not closed under the affine action 
of ${\rm Stab}_{\U}(x+\Gamma_{n+e})$, see item (1) below. We also stress that in the following all stabilisers are with respect to the normal non-affine action, that is, ${\rm Stab}_\U(x+\Gamma_{n+e})$ 
is the set of all $u\in \U$ such that $(u,1)(x)\equiv x\bmod \Gamma_{n+e}$. 

We now inspect the affine action in more detail. Let  $z=y+\Gamma_{n+e+1}\in \Gamma_{n+e}/\Gamma_{n+e+1}$ and let $u,v\in {\rm Stab}_{\U}(x+\Gamma_{n+e})$. A short calculation shows that 
$(uv)\cdot z=u\cdot(v\cdot z)$, and the following hold:
\begin{iprf}
\item[(1)] If $z\in\mathcal{Z}(n,e,t)$, then   $u\cdot z\in\mathcal{Z}(n,e,t)$ if and only if $\chi(u)\in{\rm Stab}_\U(x+y+\Gamma_{n+e+1})$.
\item[(2)] We have $u\cdot y\equiv v\cdot y\bmod \Gamma_{n+e+1}$ if and only if $uv^{-1}\in{\rm Stab}_{\U}(x+y+\Gamma_{n+e+1})$.
\end{iprf}

We summarise these results in a lemma; this proves Theorem \ref{thmGalprops}a).

\begin{lemma}\label{lemID}
Let $\mathcal{R}$ be a Galois tree in $\S_p(n)$ with Galois order $h$ and type $t$. Let $G=C_{n,e}(x)$ be a group in $\mathcal{R}$ with $x\in\Omega_{n,t}$ and $e < \dep(\S_p(n))$. 
If $\mathcal{M}\subseteq \Sigma_{n+e,t}$ defines a set of representatives of the affine ${\rm Stab}_{\U}(x+\Gamma_{n+e})$-orbits on $\mathcal{Z}(n,e,t)$, 
then the immediate descendants of $G$ in $\mathcal{R}$, up to isomorphism, are
\[\{C_{n,e+1}(x+y) : y\in \mathcal{M}\}.\]
If $p\equiv 5\bmod 6$, then there exists $e_0=e_0(p)$ such that for all $e_0 \leq e < n-c$ the number of immediate descendants of $G$ in $\mathcal{R}$ is a power of $p$.
\end{lemma}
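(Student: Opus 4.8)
The plan is to reinterpret the count of immediate descendants as the number of orbits of an \emph{affine} action on a finite $\F_p$-vector space, and then to argue that—once $e$ is large and $p\equiv5\bmod6$—this action is by translations only, so that its orbits are precisely the cosets of a subspace. By the discussion preceding the lemma, the immediate descendants of $G=C_{n,e}(x)$ in $\mathcal{R}$ correspond, up to isomorphism, to the orbits of the affine ${\rm Stab}_\U(x+\Gamma_{n+e})$-action \eqref{eqaffine} meeting $\mathcal{Z}(n,e,t)\cong\Sigma_{n+e,t}/\Sigma_{n+e+1,t}$. Since $\Sigma_{n+e,t}$ is an $\O_\L$-lattice and $p\p^m=\p^{m+\defd}$ gives $p\,\Sigma_{n+e,t}\subseteq\Sigma_{n+e+1,t}$, this quotient is killed by $p$, i.e.\ it is a finite $\F_p$-vector space; so the assertion is equivalent to the number of these orbits being a power of $p$.

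First I would determine the acting group. Writing $\U=\langle\omega\rangle\times\langle\theta\rangle\times\U_2$, the pro-$p$ part is $P=\langle\theta\rangle\times\U_2\subseteq 1+\p$. For $v=\omega^{a}v'$ with $v'\in P$ one computes $\rho_j(v)=\omega^{a}\rho_j(v')$ with $\rho_j(v')\in 1+\p$, so $(v,1)$ acts on the top layer $\Gamma_n/\Gamma_{n+1}$ as the scalar $\omega^{-a}$; as $x\in\Delta_n$ is nonzero there and $\omega^{-a}-1$ is a unit whenever $\omega^{a}\neq1$, the requirement $(v,1)(x)\equiv x\bmod\Gamma_{n+e}$ forces $\omega^{a}=1$. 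Hence ${\rm Stab}_\U(x+\Gamma_{n+e})\subseteq P$ is pro-$p$, and the same computation shows that $P$ acts trivially on every layer $\Gamma_{n+e}/\Gamma_{n+e+1}$. Consequently the \emph{linear} part of the affine action \eqref{eqaffine} on $\mathcal{Z}(n,e,t)$ is trivial, and each $v$ acts by the pure translation $z\mapsto z+\bigl((v,1)(x)-x\bigr)$. At this point I would invoke \cite[Theorem~5.1]{Die10}, whose hypothesis $p\equiv5\bmod6$ supplies the stabiliser property that, for $e\geq e_0$, keeps these translations inside $\mathcal{Z}(n,e,t)$ and lets the affine action descend to an honest action on it (cf.\ item (1) before the lemma).

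Granting the stabiliser property, the map $v\mapsto(v,1)(x)-x+\Gamma_{n+e+1}$ is a group homomorphism onto an $\F_p$-subspace $T\subseteq\mathcal{Z}(n,e,t)$, and two parameters $x+y,\,x+z$ (with $y,z\in\Sigma_{n+e,t}$) define isomorphic groups exactly when $\bar z-\bar y\in T$. Thus the isomorphism classes are the cosets of $T$ in $\mathcal{Z}(n,e,t)$, and their number equals the index $|\mathcal{Z}(n,e,t)|/|T|$, which is a power of $p$. The hard part, and the only place the congruence on $p$ is genuinely used, is exactly the stabiliser statement of \cite[Theorem~5.1]{Die10} together with the threshold $e_0$: it is what justifies the reduction to the twist $\sigma^0$ (carried out before the lemma) and the translation description for $e\geq e_0$. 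For $p\equiv1\bmod6$ (equivalently $3\mid\defd$) this control breaks down, the affine action need not reduce to translations preserving $\mathcal{Z}(n,e,t)$, and—as the examples of Appendix~\ref{examples} indicate—the number of immediate descendants need not be a power of $p$.
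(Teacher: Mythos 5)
Your reduction of the problem to counting affine ${\rm Stab}_{\U}(x+\Gamma_{n+e})$-orbits on $\mathcal{Z}(n,e,t)$ matches the paper's set-up, but the pivotal step of your argument is false. You claim that the pro-$p$ part $P=\langle\theta\rangle\times\U_2$ of $\U$ acts trivially on \emph{every} layer $\Gamma_i/\Gamma_{i+1}$, and deduce from this that the linear part of the affine action \eqref{eqaffine} is trivial -- for every $p\geq 7$ and every $e$, with no congruence condition. The coordinate-wise computation behind this ("$\rho_j(v')\in 1+\p$, hence trivial on layers") is valid for the filtration by the ideals $\p^m$, but not for $\Gamma_m=(\p^{m-1})^\defell B^{-1}$: the action \eqref{eqAct} is diagonal in the $\SS$-coordinates, while the lattices $\Gamma_m$ are not coordinate-wise products in those coordinates, precisely because $B$ is not invertible over $\O$. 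So "multiply each coordinate by a unit congruent to $1$ modulo $\p^2$" need not fix $\Gamma_i/\Gamma_{i+1}$ pointwise. Indeed, if your claim were correct, then (at least for $h=1$, where $\mathcal{Z}(n,e,t)$ is the full layer $\Gamma_{n+e}/\Gamma_{n+e+1}$ and your second issue of translations leaving $\mathcal{Z}$ is vacuous) the orbits would be cosets of the translation group and the number of immediate descendants would be a $p$-power for \emph{all} $p\geq 7$ and all $e$; Figure \ref{fignondistreg} (right) refutes this: a Galois tree of Galois order $1$ in $\S_7(11)$ contains a group with $13$ immediate descendants, and $13$ is not a power of $7$. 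The triviality of the linear part is exactly the hard content of \eqref{eqstab}, i.e.\ of \cite[Theorem 5.1]{Die10}: it holds only for elements of ${\rm Stab}_{\U_2}(x+\Gamma_{n+e})$ (not for all of $\U_2$), only for $p\equiv 5\bmod 6$, and only for $e\geq e_0$. This is where the congruence hypothesis genuinely enters the paper's proof, whereas in your argument it enters nowhere essential.

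A secondary problem is how you then spend \cite[Theorem 5.1]{Die10}: you use it to assert that the translations $(v,1)(x)-x+\Gamma_{n+e+1}$ lie in $\mathcal{Z}(n,e,t)$, so that the affine action restricts to $\mathcal{Z}(n,e,t)$ and the orbit count is $|\mathcal{Z}(n,e,t)|/|T|$. That theorem asserts no such invariance; by item (1) before the lemma, invariance would require $\chi(u)\in{\rm Stab}_\U(x+\Gamma_{n+e+1})$ for every $u$ in the stabiliser, which is not available. The paper never proves $\mathcal{Z}(n,e,t)$ is invariant: it counts the intersections of orbits with $\mathcal{Z}(n,e,t)$ via \eqref{eqo1}, uses \eqref{eqstab} to show these intersections all have the same $p$-power size $o$ as in \eqref{eqo}, and concludes with $|\mathcal{Z}(n,e,t)|/o$. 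In your translation picture the same repair is easy -- once the linear part is trivial, each orbit meets $\mathcal{Z}(n,e,t)$ in a coset of $T\cap\mathcal{Z}(n,e,t)$, so the count is $|\mathcal{Z}(n,e,t)|/|T\cap\mathcal{Z}(n,e,t)|$, still a $p$-power -- so this slip is fixable; but the first gap is not, since the translation picture itself is exactly what fails without \cite[Theorem 5.1]{Die10}.
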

\begin{proof}
 If $z\in\Sigma_{n,t}$, then a direct calculation shows that $(u,1)\in\U\rtimes \G$ stabilises $z+\Gamma_{n+e}$ if and only if $(\tau(u),1)$ does. Also, note that if $x$ and $y$ are as in the lemma, 
 then ${\rm Stab}_\U(x+y+\Gamma_{n+e+1})$ is a subgroup of ${\rm Stab}_\U(x+\Gamma_{n+e})$. A short calculation shows the following:
\begin{iprf}
\item[(3)] If  $u\in{\rm Stab}_\U(x+\Gamma_{n+e})$ and $v\in{\rm Stab}_\U(x+y+\Gamma_{n+e+1})$, then $(\chi(u),1)(x+y)\equiv x+y \bmod \Gamma_{n+e+1}$ if and only if  
$(\chi(uv),1)(x+y)\equiv x+y \bmod \Gamma_{n+e+1}$.
\end{iprf}
Since $\theta\in\U$  acts trivially on $\Gamma_1$ and $\omega\in\U$ acts nontrivially on each $\Gamma_i/\Gamma_{i+1}$, Properties (1)--(3) imply that the ${\rm Stab}_{\U}(x+\Gamma_{n+e})$-orbit 
of $y+\Gamma_{n+e+1}\in\mathcal{Z}(n,e,t)$ intersects $\mathcal{Z}(n,e,t)$ in a set of $p$-power size
\begin{eqnarray}\label{eqo1}\qquad |\{u{\rm Stab}_{\U_2}(x+y+\Gamma_{n+e+1}) : u\in {\rm Stab}_{\U_2}(x+\Gamma_{n+e}),\; \chi(u)\in {\rm Stab}_{\U_2}(x+y+\Gamma_{n+e+1})\}|.
\end{eqnarray}If $p\equiv 5\bmod 6$, 
then \cite[Theorem 5.1]{Die10} shows that (for sufficiently large $e$) we have
\begin{equation}\label{eqstab} \begin{array}{cc}{\rm Stab}_{\U_2}(x+\Gamma_{n+e+\defd})= ({\rm Stab}_{\U_2}(x+\Gamma_{n+e}))^{[p]}\quad\text{and}\\[1ex] {\rm Stab}_{\U_2}(x+\Gamma_{n+e})\leq {\rm Stab}_{\U_2}(\Gamma_i/\Gamma_{i+3\defd})\text{ for all $i$};\end{array}
\end{equation}
here $({\rm Stab}_{\U_2}(x+\Gamma_{n+e}))^{[p]}$ denotes the subgroup generated by all $p$-th powers. In this situation,\linebreak ${\rm Stab}_{\U_2}(x+\Gamma_{n+e})$ stabilises $\Gamma_{n+e}/\Gamma_{n+e+1}$, 
hence \eqref{eqo1} is independent of $y$ and each ${\rm Stab}_{\U}(x+\Gamma_{n+e})$-orbit in $\mathcal{Z}(n,e,t)$ has size
\begin{eqnarray}\label{eqo}\qquad o=|\{u{\rm Stab}_{\U_2}(x+\Gamma_{n+e+1}) : u\in {\rm Stab}_{\U_2}(x+\Gamma_{n+e}),\; \chi(u)\in {\rm Stab}_{\U_2}(x+\Gamma_{n+e+1})\}|.
\end{eqnarray} 
Thus, if $p\equiv 5\bmod 6$, then the number of affine orbits is $|\mathcal{Z}(n,e,t)|/o$. Since $\mathcal{Z}(n,e,t)$ is a section of the $p$-group $\Gamma_1/\Gamma_{n+e+1}$, it follows that 
$|\mathcal{Z}(n,e,t)|/o$ is a power of $p$, as claimed.
\end{proof}
  
This result implies that the ramification levels in a Galois tree occur with periodicity $\defd$, at least when $p\equiv 5\bmod 6$. We prove this in the following proposition, which
yields Theorem \ref{thmGalprops}b). The proof is similar to the proof of Theorem~\ref{thm_newperiod_short}; the difference is that in this proposition we only consider descendants in 
a Galois tree contained in $\S_p(n)$, whereas Theorem \ref{thm_newperiod_short} considers descendants in the branch $\B_p(n)$, which leads to additional difficulties.

\begin{proposition}\label{propDes5}
If $p\equiv 5\bmod 6$, then there exists $e_0=e_0(p)$, such that for all $e_0 \leq e < \dep(\S_p(n))-\defd$ the following holds.  If $\mathcal{R}$ is a Galois tree in $\S_p(n)$ with Galois order $h$ and type $t$, then $\mathcal{R}$ 
has a ramification level $e$ if and only if $\mathcal{R}$ has a ramification level $e+\defd$. More precisely, if $x \in \Omega_{n,t}$, then $C_{n,e+\defd}(x)$
has exactly $m$ immediate descendants in $\R$ at depth $e+\defd+1$ in $\S_p(n)$ if and only if $C_{n,e}(x)$ has exactly $m$ immediate descendants in $\R$ at depth $e+1$ in $\S_p(n)$.
\end{proposition}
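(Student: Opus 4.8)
The plan is to reduce the statement to the orbit-counting formula from Lemma~\ref{lemID} and then exploit two independent periodicities of period $\defd$: one for the module $\mathcal{Z}(n,e,t)$ coming from $p\p^m=\p^{m+\defd}$, and one for the relevant stabilisers coming from the $p$-th power relation \eqref{eqstab}. Throughout I fix $x\in\Omega_{n,t}$ and, for $m\geq 0$, abbreviate $S_m={\rm Stab}_{\U_2}(x+\Gamma_{n+m})$, so that $S_{m+1}\leq S_m$. Choosing $e_0$ so that \eqref{eqstab} holds at every level $\geq e_0$, Lemma~\ref{lemID} says that the number of immediate descendants of $C_{n,e}(x)$ in $\R$ equals $N_e=|\mathcal{Z}(n,e,t)|/o_e$, where $o_e$ is the common affine orbit size \eqref{eqo}; writing $T_e=\{u\in S_e:\chi(u)\in S_{e+1}\}$ one has $o_e=|T_e/(T_e\cap S_{e+1})|$. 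It therefore suffices to prove $|\mathcal{Z}(n,e,t)|=|\mathcal{Z}(n,e+\defd,t)|$ and $o_e=o_{e+\defd}$, since together these give $N_e=N_{e+\defd}$, which is exactly the ``more precisely'' statement.

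For the first equality I would use that multiplication by $p$ is $\tau$-equivariant (it is $\Q_p$-linear and commutes with $(1,\tau)$ by \eqref{eqAct}) and injective on the torsion-free lattices $\Gamma_m$. Since $p\p^m=\p^{m+\defd}$ yields $p\Gamma_m=\Gamma_{m+\defd}$, it follows that $p\Sigma_{m,i}=\Sigma_{m+\defd,i}$ for every $i$, so multiplication by $p$ induces an isomorphism
\[\mathcal{Z}(n,e,t)=\Sigma_{n+e,t}/\Sigma_{n+e+1,t}\;\xrightarrow{\ \sim\ }\;\Sigma_{n+e+\defd,t}/\Sigma_{n+e+1+\defd,t}=\mathcal{Z}(n,e+\defd,t),\]
and in particular $|\mathcal{Z}(n,e,t)|=|\mathcal{Z}(n,e+\defd,t)|$.

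The second equality is the technical heart of the argument, and here I would transport the stabilisers by the $p$-th power map $\pi\colon\U_2\to\U_2,\ u\mapsto u^p$. Because $p\equiv 5\bmod 6$, the first identity in \eqref{eqstab} (applied at levels $e$ and $e+1$, both $\geq e_0$) gives $S_{e+\defd}=S_e^{[p]}=\pi(S_e)$ and $S_{e+\defd+1}=S_{e+1}^{[p]}=\pi(S_{e+1})$. The group $\U_2=1+\p^2$ is torsion-free, since $\theta\notin\U_2$ and hence $\U_2$ contains no nontrivial root of unity, so $\pi$ is an \emph{injective} endomorphism and restricts to isomorphisms $S_e\cong S_{e+\defd}$ and $S_{e+1}\cong S_{e+\defd+1}$. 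As $\pi$ commutes with $\chi$ (both are endomorphisms of the abelian group $\U_2$, so $\chi(u^p)=\chi(u)^p$), one checks that $\pi(T_e)=T_{e+\defd}$: the inclusion $\subseteq$ is immediate, and for $\supseteq$ one lifts $w\in T_{e+\defd}\subseteq S_{e+\defd}$ to the unique $u\in S_e$ with $u^p=w$ and uses injectivity of $\pi$ to pass from $\chi(u)^p=\chi(w)\in S_{e+1}^{[p]}=\pi(S_{e+1})$ to $\chi(u)\in S_{e+1}$. Injectivity also gives $\pi(T_e\cap S_{e+1})=T_{e+\defd}\cap S_{e+\defd+1}$, so $\pi$ induces an isomorphism $T_e/(T_e\cap S_{e+1})\cong T_{e+\defd}/(T_{e+\defd}\cap S_{e+\defd+1})$ and hence $o_e=o_{e+\defd}$. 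The use of torsion-freeness for the surjectivity $\pi(T_e)=T_{e+\defd}$ is the step I expect to be most delicate to phrase correctly; the $p\equiv 5\bmod 6$ hypothesis enters only through \eqref{eqstab}.

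Finally, the ramification-level equivalence would follow from $N_e=N_{e+\defd}$ together with Theorem~\ref{thmGFP} and Corollary~\ref{corSamey}, which guarantee that every group at depth $e$ (resp.\ $e+\defd$) in $\R$ has the form $C_{n,e}(x)$ (resp.\ $C_{n,e+\defd}(x)$) with $x\in\Omega_{n,t}$. The forward direction is clean: a ramifying $C_{n,e}(x)\in\R$ has $N_e\geq 2$, and its descendant $C_{n,e+\defd}(x)$ lies in $\R$ (its Galois order is divisible by $h$ since $x\in\Omega_{n,t}$, and bounded by $\gal(C_{n,e}(x))=h$), so $N_{e+\defd}\geq 2$. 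The converse is where I expect the main obstacle: from a ramifying $C_{n,e+\defd}(x)\in\R$ one passes to its depth-$e$ ancestor $C_{n,e}(x)$, and one must exclude that the root of $\R$ lies strictly between depths $e$ and $e+\defd$, for otherwise $C_{n,e}(x)\notin\R$. I would close this boundary case by showing that, in the admissible range $e_0\leq e<\dep(\S_p(n))-\defd$, the depth-$e$ ancestor still has Galois order exactly $h$ and hence lies in $\R$ (equivalently, that no ramification occurs within distance less than $\defd$ below a root at such depth); this is the one point where I would, if needed, draw on the root-level analysis of Section~\ref{galroots}.
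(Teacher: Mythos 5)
Your handling of the ``more precisely'' clause is correct and is in substance the paper's own argument: both you and the paper fix $x\in\Omega_{n,t}$, reduce to Lemma~\ref{lemID}, and combine the bijection $\mathcal{Z}(n,e,t)\to\mathcal{Z}(n,e+\defd,t)$ given by multiplication by $p$ with the stabiliser periodicity \eqref{eqstab}. The difference is only bookkeeping: the paper verifies the equivariance $\phi(u\cdot z)=u^p\cdot\phi(z)$ for $u\in{\rm Stab}_{\U_2}(x+\Gamma_{n+e})$, using the second half of \eqref{eqstab} to get $(u^p,1)(x)-x\equiv p\bigl((u,1)(x)-x\bigr)$ modulo a sufficiently deep lattice, and concludes that $\phi$ carries affine orbits bijectively onto affine orbits; you instead keep numerator and denominator of $|\mathcal{Z}(n,e,t)|/o_e$ separate and transport $T_e$ and $T_e\cap S_{e+1}$ by the $p$-th power map, using injectivity (torsion-freeness of $\U_2$) and compatibility with $\chi$. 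Both computations use exactly the same inputs, and your version of $N_e=N_{e+\defd}$ is sound.

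The genuine gap is the converse of the ramification-level equivalence, which you leave open, and the repair you propose is the wrong one: you cannot show that a root at depth $>e_0$ has no ramification within distance $<\defd$ below it, because this is contradicted by the paper's own data. In Figure~\ref{figs724} the Galois trees of Galois order $3$ in $\S_7(24)$ ramify at distance $2$, $4$ or $5$ below their roots, and the pairs of such trees whose roots are $\defd=6$ apart show identical offsets; given the periodicity of roots (Theorem~\ref{thmGalprops}c)) and the periodicity you have just proved, this phenomenon recurs at arbitrarily large depths, so no choice of $e_0$ can exclude it. What closes the converse in the only case it can be closed is much simpler, and it is what the paper tacitly relies on when it writes ``the claim follows with Lemma~\ref{lemID}'': if $\R$ contains \emph{any} group at depth $e$ (equivalently, its root has depth $\leq e$), then the depth-$e$ ancestor of a ramification witness $C_{n,e+\defd}(x)\in\R$ lies on the unique path from that witness to the root of $\R$, hence lies in $\R$, and your count $N_e=N_{e+\defd}\geq 2$ applies to it via Lemma~\ref{lemID}. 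If instead the root of $\R$ lies strictly between depths $e$ and $e+\defd$, then $\R$ has no group at depth $e$ at all, while the figure shows that ramification at level $e+\defd$ can nonetheless occur; in that boundary case the literal biconditional is out of reach of the counting argument (for the paper as well as for you), and the proposition has to be read as a statement about the levels that $\R$ actually occupies, i.e.\ as the fixed-$x$ statement that both proofs establish. So: your core matches the paper, your worry about the root position is legitimate and is not addressed by the paper either, but the way to settle it is the ancestor-path observation above, not a nonexistence result for ramification near roots.
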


\begin{proof}
We use Lemma \ref{lemID} to describe the immediate descendants of $C_{n,e}(x)$ and $C_{n,e+d}(x)$ in $\mathcal{R}$. Note that multiplication by $p$ induces a bijection \[\phi\colon \mathcal{Z}(n,e,t)\to \mathcal{Z}(n,e+\defd,t).\] 
As in the proof of Lemma \ref{lemID}, the assumption $p\equiv 5\bmod 6$ implies that for sufficiently large $e$ we have
\[ {\rm Stab}_{\U_2}(x+\Gamma_{n+e+\defd})= ({\rm Stab}_{\U_2}(x+\Gamma_{n+e}))^{[p]}\] and every $u\in {\rm Stab}_{\U_2}(x+\Gamma_{n+\defd+e})$ acts trivially on $\Gamma_i/\Gamma_{i+3\defd}$ 
for every $i$,  see  \cite[Theorem 5.1]{Die10}. Now let $u\in {\rm Stab}_{\U_2}(x+\Gamma_{n+e})$. By assumption, $(u,1)(x)-x = y$ lies in $\Gamma_{n+e}$, 
so $(u^p,1)(x)-x \equiv py\bmod \Gamma_{n+e+3d}$. This proves that 
for $z+\Gamma_{n+e+1}\in\mathcal{Z}(n,e,t)$ we have
\begin{eqnarray*}\phi( u\cdot (z+\Gamma_{n+e+1})) &=& \phi( (u,1)(x)-x+(u,1)(z)+\Gamma_{n+e+1})\\&=&
  \phi( y+z+\Gamma_{n+e+1})\\&=& py+pz+\Gamma_{n+e+\defd+1}\\
   &=& (u^p,1)(x)-x+pz+\Gamma_{n+e+\defd+1}\\
   &=& u^p\cdot \phi(z+\Gamma_{n+e+1}).
\end{eqnarray*}
Note that $\theta$ acts trivially on  $\Gamma_1$; moreover, $\omega^p=\omega$, and ${\rm Stab}_{\U_2}(x+\Gamma_{n+e+\defd})= ({\rm Stab}_{\U_2}(x+\Gamma_{n+e}))^{[p]}$ 
by assumption. Thus, $\phi$ induces a bijection between the affine ${\rm Stab}_{\U}(x+\Gamma_{n+e})$-orbits in $\mathcal{Z}(n,e,t)$ and the affine  
${\rm Stab}_{\U}(x+\Gamma_{n+e+\defd})$-orbits in $\mathcal{Z}(n,e+\defd,t)$. Now the claim follows with Lemma \ref{lemID}.
 \end{proof}

\section{The roots of Galois trees}
\label{galroots}

\noindent In this section we investigate the structure of the roots of Galois trees. Throughout, we assume that $G$ is a candidate for a root 
of a Galois tree in $\S_p(n)$ with Galois order $h$ and type $t$. It will be convenient to describe $G$ as a descendant of its immediate ancestor, which 
is why we assume that $G$ has depth $e+1$ in $\S_p(n)$, so the parent has depth $e$. Suppose this parent has Galois order $b=ha$ with 
$a\geq 1$ a divisor of $k$; thus, $G$ is a root of a Galois tree if and only if $a\geq 2$. Throughout let $\nu\in \G$ such that \[\nu^a=\tau;\] we can assume 
that $\tau=\sigma^k$ and $\nu=\sigma^{k/a}$ with $d=hk=b(k/a)$. Recall that the sets $\Sigma_{n,i}$ and $\Omega_{n,i}$ have been defined with respect to our 
fixed generator $\tau\in \G$ of order $h$. We require the analogous definitions for $\nu$, that is, for $i \in \{1,\ldots,d-1\}$ we set
\[\Sigma_{n,i}'=\{x\in\Gamma_n : (1,\nu)(x)=\omega^i x\}\quad\text{and}\quad \Omega_{n,i}'=\{x\in\Delta_n :(1,\nu)(x)=\omega^i x\}\]
and define $\chi' \colon \U \ra \U$ via $\chi'(u)=u\nu(u)^{-1}$.

\begin{theorem}\label{thmRoot}
With the previous notation, the following holds. The group $G$ is the root of a Galois tree with immediate ancestor of Galois order $b=ha$ for some $a\geq 2$ if and only if
\[G\cong C_{n,e+1}(x+y)\]
for some  $x\in \Omega_{n,s}'$ and $y\in\Omega_{n+e,t}\setminus\Omega_{n+e,s}'$ such that $\omega^{sa}=\omega^t$ and $\gal(C_{n,e}(x))=b$ and 
\begin{eqnarray} \label{eqroot}
(\omega^{-s},\nu)[(u,1)(x+y)]\not\equiv (u,1)(x+y)\bmod \Gamma_{n+e+1}\quad \text{for all }u\in Q,
\end{eqnarray}
where $Q=\ker(\chi)\cap {\rm Stab}_{\U_2}(x+\Gamma_{n+e})$.
\end{theorem}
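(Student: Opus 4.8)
The plan is to characterise the root $G$ of a Galois tree of Galois order $h$ and type $t$ by combining the isomorphism criterion of Proposition~\ref{propIsoGT} with the global-fixed-point description of Theorem~\ref{thmGFP}, applied at two consecutive Galois orders. By the setup, $G$ has depth $e+1$ and its immediate ancestor $P$ has depth $e$ and Galois order $b=ha$; since $\nu=\sigma^{k/a}$ generates the order-$b$ subgroup of $\G$, Theorem~\ref{thmGFP} applied to $P$ gives a representative $x\in\Omega_{n,s}'$ with $\gal(C_{n,e}(x))=b$, and the compatibility $\omega^{sa}=\omega^t$ is forced because $\nu^a=\tau$ translates the $\nu$-eigenvalue $\omega^s$ into the $\tau$-eigenvalue $\omega^{sa}$, which must equal $\omega^t$ as $G$ has type $t$.

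First I would show that every immediate descendant of $P$ in $\S_p(n)$ has the form $C_{n,e+1}(x+y)$ with $y\in\Sigma_{n+e,\cdot}$, exactly as in the set-up preceding Lemma~\ref{lemID} but now carried out for $\nu$ and $\chi'$ rather than $\tau$ and $\chi$. The point is that $G$ is a descendant of $P$ whose Galois order has dropped from $b$ to $h=b/a$; so among these descendants $C_{n,e+1}(x+y)$ I must isolate those that (i) still have Galois order divisible by $h$ but (ii) no longer have Galois order divisible by $b$. Condition (i) is equivalent, by Theorem~\ref{isomautpr}c) and the eigenvector description of Lemma~\ref{xiny}, to $x+y\in\Sigma_{n,t}$, i.e.\ $(1,\tau)(x+y)=\omega^t(x+y)$ modulo $\Gamma_{n+e+1}$; since $x$ is already a $\tau$-eigenvector with eigenvalue $\omega^{sa}=\omega^t$ (because it is a $\nu$-eigenvector and $\tau=\nu^a$), this reduces to requiring $y\in\Sigma_{n+e,t}$, and the non-degeneracy $y\in\Omega_{n+e,t}$ (rather than lying in $\Sigma_{n+e+1,t}$) is what makes $G$ genuinely at depth $e+1$ rather than collapsing into $P$.

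The heart of the argument is condition (ii): $G$ fails to have Galois order divisible by $b$. By Theorem~\ref{isomautpr}c) this means there is \emph{no} $u\in\U$ with $(u,\nu)(x+y)\equiv x+y\bmod\Gamma_{n+e+1}$. The reduction I would perform is exactly the one in Proposition~\ref{propIsoGT}: a short eigenvalue computation modulo $\Gamma_{n+e+1}$ using $\chi'(u)\in\langle\theta\rangle\times\U_2$ (so its induced action has $p$-power order and cannot produce the unit eigenvalue twist) forces the $\nu$-component of any such $u$ to act as $\omega^{-s}$, normalising the Galois element to $(\omega^{-s},\nu)$, and then Lemma~\ref{lemFP} lets me replace the unit part by an element of $\ker(\chi)$, and further intersect with ${\rm Stab}_{\U_2}(x+\Gamma_{n+e})$ because $y\in\Sigma_{n+e,\cdot}$ already pins down the action below level $n+e$. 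This is precisely how the set $Q=\ker(\chi)\cap{\rm Stab}_{\U_2}(x+\Gamma_{n+e})$ arises, and the negation of the existence statement becomes exactly the displayed condition~\eqref{eqroot}. Finally, $y\notin\Omega_{n+e,s}'$ encodes that $x+y$ is not itself a $\nu$-eigenvector (which would have kept the Galois order at $b$), complementing~\eqref{eqroot}.

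I expect the main obstacle to be the bookkeeping in this last reduction: passing cleanly from the abstract non-existence of a stabilising $(u,\nu)$ in all of $\U\rtimes\G$ down to the concrete quantifier over $u\in Q$ requires me to justify both the normalisation of the Galois part to $\nu$ (as opposed to some other generator of the order-$b$ cyclic subgroup) and the reduction of the unit part into $\ker(\chi)\cap{\rm Stab}_{\U_2}(x+\Gamma_{n+e})$, while simultaneously tracking the affine correction term $(u,1)(x+y)$ rather than a purely linear action. The delicate point is that these reductions must be shown to be \emph{reversible}, so that~\eqref{eqroot} is genuinely equivalent to, and not merely implied by, the failure of the Galois order to reach $b$; I would handle this by verifying that each replacement step (the eigenvalue normalisation and the Lemma~\ref{lemFP} substitution) changes neither the isomorphism type of $C_{n,e+1}(x+y)$ nor membership in $Q$, so the two conditions have the same truth value.
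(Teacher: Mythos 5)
Your outline assembles the right toolbox (Theorem \ref{thmGFP}, Proposition \ref{propIsoGT}, Lemma \ref{lemFP}, the $p$-power-order eigenvalue trick), but both crux steps are asserted rather than proved, and each hides a genuine gap. In the forward direction you apply Theorem \ref{thmGFP} to the parent $P$ to get $x\in\Omega_{n,s}'$, write $G\cong C_{n,e+1}(x+y)$ with $y\in\Gamma_{n+e}$, and then claim that $h\mid\gal(G)$ ``reduces'' to $y\in\Sigma_{n+e,t}$ by Theorem \ref{isomautpr}c) and Lemma \ref{xiny}. Those results do not give this: Theorem \ref{isomautpr}c) only yields a \emph{local} fixed-point condition $(u,\tau)(x+y)\equiv x+y\bmod\Gamma_{n+e+1}$ with an arbitrary unit $u$, and upgrading it to the \emph{global} eigenvector statement $y\in\Sigma_{n+e,t}$ with the \emph{same} $x$ is exactly the hard local-to-global content; Proposition \ref{equiv2} does not help either, since it replaces the representative wholesale and gives no control relative to the fixed summand $x$. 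The paper runs the decomposition the other way around: it applies Theorem \ref{thmGFP} to $G$ itself, getting $G\cong C_{n,e+1}(z)$ with $z\in\Omega_{n,t}$ and parent $C_{n,e}(z)$, decomposes $\Sigma_{n,t}=\Sigma_{n,i_1}'\oplus\cdots\oplus\Sigma_{n,i_a}'$ into $\nu$-eigenspaces, and uses the local condition $(u,\nu)(z)\equiv z\bmod\Gamma_{n+e}$ (from $\gal(P)=b$) together with the coprimality trick to show every $\nu$-component of $z$ except $z_s$ lies in $\Gamma_{n+e}$. That one argument simultaneously produces $x=z_s\in\Omega_{n,s}'$, $y=z-z_s\in\Sigma_{n+e,t}$ and the compatibility $\omega^{sa}=\omega^t$; your plan has no substitute for it.

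In the backward direction your ``reversible reduction'' of the non-existence statement to \eqref{eqroot} does not close. Starting from some $w\in\U$ with $(w,\nu)(x+y)\equiv x+y\bmod\Gamma_{n+e+1}$, the eigenvalue normalisation pins the root-of-unity part of $w$, and splitting off an $\im(\chi')$-factor $\chi'(u_0)^{-1}$ turns the condition, after conjugation, into ``$(\omega^{-s},\nu)$ fixes $(u_0,1)(x+y)$''; but nothing forces the conjugator $u_0$ into $Q=\ker(\chi)\cap{\rm Stab}_{\U_2}(x+\Gamma_{n+e})$. It is only determined up to $\ker(\chi')$, and since $\ker(\chi')\subseteq\ker(\chi)$, adjusting within that coset can never repair a failure of $u_0\in\ker(\chi)$, nor of the stabiliser condition. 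The paper avoids this with an ingredient absent from your proposal: Lemma \ref{lemID}. If $G$ is not a root, then $\gal(G)=b$, so $G$ lies in $P$'s Galois tree (order $b$, type $s$ with respect to $\nu$), and Lemma \ref{lemID} hands you a representative $G\cong C_{n,e+1}(x+z)$ with the \emph{same} $x$ and $z\in\Omega_{n+e,s}'$ a genuine $\nu$-eigenvector. The isomorphism $(u^{-1},\sigma^j)(x+z)\equiv x+y$ is then normalised: $j=0$, because otherwise $\gal(P)$ would be divisible by $|\sigma^j|>b$ (using the exact equality $\gal(P)=b$, not mere divisibility), which forces $u\in{\rm Stab}_{\U_2}(x+\Gamma_{n+e})$; Lemma \ref{lemFP}, applied to the two $\tau$-eigenvectors $x+y$ and $x+z$, then puts $u\in\ker(\chi)$, and since $x+z$ is fixed by $(\omega^{-s},\nu)$ this contradicts \eqref{eqroot}. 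It is precisely this Lemma \ref{lemID} representative that makes the conjugator land in $Q$; without it, your direct quantifier reduction stalls at exactly the point you flagged as ``delicate'' but did not resolve.
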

\enlargethispage{3ex}
Before we prove the theorem, we note that Condition \eqref{eqroot} can be replaced by 
\begin{eqnarray}\label{eqrootNEW}(\omega^{-s},\nu)[(u,1)(x+y)]\not\equiv (u,1)(x+y)\bmod \Gamma_{n+e+1}\quad \text{for all }u\in \mathcal{Q}, \end{eqnarray}
where $\mathcal{Q}$ is a set of coset representatives of $\ker(\chi') \cap {\rm Stab}_{\U_2}(x+\Gamma_{n+e})$ in $Q$. 
This follows because if  $u\in\ker(\chi)$ satisfies $(\omega^{-s},\nu)[(u,1)(x+y)]\equiv (u,1)(x+y)\bmod \Gamma_{n+e+1}$, then $uv$ with $v\in\ker(\chi')$ also satisfies
\[(\omega^{-s},\nu)(uv,1)(x+y)\equiv(v,1)(\omega^{-s},\nu)(u,1)(x+y)\equiv (v,1)(u,1)(x+y)\bmod\Gamma_{n+e+1}.\] 
Moreover, Condition \eqref{eqroot} with $u=1$ implies that $(1,\nu)(y)\not\equiv \omega^{s}y\bmod\Gamma_{n+e+1}$; in particular, $y\notin \Omega_{n+e,s}'$.

\begin{proof}[Proof of Theorem \ref{thmRoot}]
We split the proof in two parts. \medskip
  
\noindent ``$\Longrightarrow$'':\; By Theorem \ref{thmGFP} we can assume that $G=C_{n,e+1}(z)$ for some $z\in\Omega_{n,t}$. Since the immediate ancestor $C_{n,e}(z)$ 
of $G$ has Galois order $b$ by assumption, Theorem~\ref{isomautpr}c) shows that $(u,\nu)(z)\equiv z\bmod \Gamma_{n+e}$ for some $u\in\U$. Let $\omega^{i_1},\ldots,\omega^{i_a}$ 
be the $d$-th roots of unity with $\omega^{a i_j}=\omega^t$ and decompose
\[\Sigma_{n,t}=\Sigma_{n,i_1}'\oplus\ldots\oplus \Sigma_{n,i_a}'.\]
Write $z=z_{i_1}+\ldots+z_{i_a}$ with each $z_{i_j}\in \Sigma_{n,i_j}'$. Recall that $(1,\nu)(z)\equiv (u^{-1},1)(z)\bmod \Gamma_{n+e}$, 
which shows  that $(u^{-d},1)(z)\equiv z\bmod \Gamma_{n+e}$ and 
\begin{eqnarray}\label{eqomega}
(\omega^{i_1}z_{i_1}+\ldots+\omega^{i_a}z_{i_a})\equiv (u^{-1},1)(z_{i_1}+\ldots+z_{i_a})\equiv \Gamma_{n+e}.
\end{eqnarray}
Write $u=\omega^s \theta^q v$ with $v\in\U_2$ and note that $z\equiv (u^{-d},1)(z)\equiv (\theta^{qd}v^{d},1)^{-1}(z)\bmod \Gamma_{n+e}$. Since $(\theta^q v,1)$ acts as an element 
of $p$-power order on $\Gamma_1/\Gamma_{n+e}$, this implies that $(\theta^q v,1)(z)\equiv z\bmod\Gamma_{n+e}$, and therefore $(u^{-1},1)(z)\equiv \omega^s z\bmod\Gamma_{n+e}$. 
Together with \eqref{eqomega}, it follows that
\[\omega^{i_1-s}z_{i_1}+\ldots+\omega^{i_a-s}z_{i_a} \equiv z_{i_1}+\ldots+z_{i_a}\bmod \Gamma_{n+e},\]
and with Lemma \ref{lattice} we deduce that each $(\omega^{i_j-s}-1)z_{i_j}\in\Gamma_{n+e}$. Since $(\omega^{i_j-s}-1)$ is a unit if and only if $s\ne i_j$, we have $z_{i_j}\in\Gamma_{n+e}$ 
if and only if $s\ne i_j$. This shows that
\[ z\equiv z_s \bmod \Gamma_{n+e},\]
and we can write $z=z_s + y$ for some $y\in\Gamma_{n+e}$. Recall that  $\tau=\nu^a$, so $(1,\tau)(z)=\omega^t z$  shows that
\[\omega^t z_s + \omega^t y = \omega^t z = (1,\nu^a)(z_s)+(1,\tau)(y)= \omega^{sa}z_s + (1,\tau)(y).\]
From $\omega^t=\omega^{sa}$ we deduce that \[\omega^t y =(1,\tau)(y),\]
hence $y\in \Sigma_{n+e,t}$. If we set $x=z_s$, then $G \cong C_{n,e+1}(x+y)$ as claimed. It remains to show that $y\in \Omega_{n+e,t}\setminus \Omega_{n+e,s}'$ and that \eqref{eqroot} holds: 
if there is $u\in\ker(\chi)$ such that $(\omega^{-s},\nu)[(u,1)(x+y)]\equiv (u,1)(x+y)\bmod \Gamma_{n+e+1}$, then Theorem~\ref{isomautpr}c) shows that $G\cong C_{n,e+1}((u,1)(x+y))$ 
has Galois order divisible by $b$. This is a contradiction to our assumption, hence  \eqref{eqroot} holds. Condition \eqref{eqroot} with $u=1$ implies that 
$(1,\nu)(y)\not\equiv \omega^{s}y\bmod\Gamma_{n+e+1}$, hence $y\notin\Omega'_{n+e,s}$. Moreover, we must have $y\in\Omega_{n+e,t}$ since $C_{n,e+1}(x+y)\not\cong C_{n,e+1}(x)$; 
recall that the groups have different Galois orders.

\medskip

\noindent ``$\Longleftarrow$'':\; Let $G \cong C_{n,e+1}(x+y)$ as in the theorem. The immediate ancestor of $G$ is $P=C_{n,e}(x)$ and has Galois order $b$ by assumption. 
Since $x+y\in\Sigma_{n,t}$, the Galois order of $G$ is divisible by $h$. Suppose, for a contradiction, that $G$ is not the root of a Galois tree. Then $G$ is a descendant 
of $P$ that lies in the same Galois tree as $P$, and Theorem~\ref{lemID} shows that $G\cong C_{n,e+1}(x+z)$ for some $z\in\Omega_{n+e,s}'$. By Theorem~\ref{isomautpr}b), 
there exists $(u^{-1},\sigma^j)\in\U\rtimes \G$ with \[(u^{-1},\sigma^j)(x+z)\equiv x+y\bmod \Gamma_{n+e+1}.\] Since $x+z\in \Sigma_{n,s}'$ is fixed by $(\omega^{-s},\nu)$, 
an argument as in the proof of Proposition \ref{propIsoGT} shows that we can assume $j\in\{0,\ldots,k/a-1\}$; recall that $\nu=\sigma^{k/a}$. If $j\ne 0$, then $(u^{-1},\sigma^j)(x)\equiv x\bmod \Gamma_{n+e}$, 
and Theorem~\ref{isomautpr}c) implies that $\gal(P)$ is  divisible by $|\sigma^j|>|\nu|$, which contradicts our assumption $\gal(P)=b$. 
Thus, $j=0$ and therefore \begin{eqnarray}\label{eqst}(u,1)(x+y)\equiv x+z\bmod \Gamma_{n+e+1},
\end{eqnarray}
which already implies that $u\in{\rm Stab}_{\U_2}(x+\Gamma_{n+e})$. Since $x+z$ and $x+y$ both lie in $\Sigma_{n,t}$,  Lemma \ref{lemFP} shows that we can also assume that  $u\in\ker(\chi)$. 
Now \eqref{eqst} contradicts our assumption \eqref{eqroot}, because  $x+z\in\Sigma_{n,s}'$ is fixed by $(\omega^{-s},\nu)$. This final contradiction shows that $G$ is the root of a Galois tree. 
\end{proof}

The next corollary is analogous to Lemma \ref{lemRam} and gives a necessary (but in general not sufficient) criterion for the depths of Galois tree roots with Galois order $h$. Recall our 
assumptions that $\tau=\sigma^k$, $\nu=\sigma^{k/a}$ and $\omega^{sa}=\omega^t$. 

\begin{corollary}
  Using the previous notation, a Galois tree root at depth $e+1$ with Galois order $h$ and immediate ancestor of Galois order $b=ha$ in a Galois tree of type $s$ can only exist if
  \[ e\equiv 2(j-i) \bmod h\]
 for some $i,j\in\{1,\ldots,\defell\}$ with $s\equiv (k/a)(n-2i)\bmod \defd$ and $t\equiv k(n+e-2j)\bmod \defd$.
\end{corollary}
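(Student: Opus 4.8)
The plan is to apply Theorem \ref{thmRoot} directly and then feed the two membership conditions it produces into Lemma \ref{xiny}. A Galois tree root at depth $e+1$ with Galois order $h$ in a tree of type $s$, whose immediate ancestor has Galois order $b=ha$, exists only if the characterisation of Theorem \ref{thmRoot} is satisfied. That characterisation forces $G\cong C_{n,e+1}(x+y)$ with $x\in\Omega_{n,s}'$ and $y\in\Omega_{n+e,t}$ (the condition $y\in\Omega_{n+e,t}\setminus\Omega_{n+e,s}'$ in particular gives $y\in\Omega_{n+e,t}$). So the first step is simply to record that both $\Omega_{n,s}'\ne\emptyset$ and $\Omega_{n+e,t}\ne\emptyset$ are necessary.

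Next I would translate each nonemptiness condition using Lemma \ref{xiny}. The subtlety is that $\Omega_{n,s}'$ is defined with respect to $\nu=\sigma^{k/a}$, not $\tau=\sigma^k$, so I must apply the lemma with $\nu$ in the role of $\tau$; concretely, $\nu$ has order $b=ha$ and plays the role of the generator whose eigenspaces define the primed sets, with $k/a$ in place of $k$. Lemma \ref{xiny} then yields $\Omega_{n,s}'\ne\emptyset$ iff $s\equiv (k/a)(n-2i)\bmod d$ for some $i\in\{1,\ldots,\ell\}$. For the unprimed set, $\Omega_{n+e,t}\ne\emptyset$ applies Lemma \ref{xiny} with the level $n+e$ in place of $n$ and the original $\tau=\sigma^k$, giving $t\equiv k(n+e-2j)\bmod d$ for some $j\in\{1,\ldots,\ell\}$. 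These are exactly the two congruences in the statement.

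The final step extracts the congruence $e\equiv 2(j-i)\bmod h$. I would combine the two congruences above: reducing both modulo $h$ and using $\omega^{sa}=\omega^t$, i.e. $sa\equiv t\bmod d$, I eliminate $s$ and $t$. Multiplying the $s$-congruence by $a$ gives $sa\equiv (k)(n-2i)\bmod d$ after using $a\cdot(k/a)=k$, and since $sa\equiv t\bmod d$ this reads $t\equiv k(n-2i)\bmod d$. Comparing with $t\equiv k(n+e-2j)\bmod d$ yields $k(n-2i)\equiv k(n+e-2j)\bmod d$, hence $k(e+2i-2j)\equiv 0\bmod d$. Since $d=hk$, dividing by $k$ gives $e+2i-2j\equiv 0\bmod h$, that is $e\equiv 2(j-i)\bmod h$, as claimed.

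The main obstacle I anticipate is purely bookkeeping: being careful that the primed objects genuinely correspond to the substitution $\tau\rightsquigarrow\nu$, $k\rightsquigarrow k/a$, $h\rightsquigarrow b$ in the setup preceding Lemma \ref{xiny}, so that the lemma legitimately applies to $\Omega_{n,s}'$; and keeping the modular arithmetic between moduli $d$ and $h$ straight when passing from $d$-th root of unity exponents to the final $\bmod\, h$ congruence. Once the substitution is justified and $\omega^{sa}=\omega^t$ is used to link the two congruences, the elimination of $s$ and $t$ is immediate, and the necessity (not sufficiency, matching the corollary's phrasing) follows since we have only used the forward direction of Theorem \ref{thmRoot}.
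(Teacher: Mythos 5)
Your proposal is correct and follows essentially the same route as the paper: both reduce the statement to the nonemptiness of $\Omega_{n,s}'$ and $\Omega_{n+e,t}$ via Theorem \ref{thmRoot}, apply Lemma \ref{xiny} (with $\nu=\sigma^{k/a}$ and $k/a$ in place of $\tau$ and $k$ for the primed set, justified by $d=b(k/a)$), and then eliminate $s$ and $t$ using $sa\equiv t\bmod d$ to obtain $k(n-2i)\equiv k(n+e-2j)\bmod d$ and hence $e\equiv 2(j-i)\bmod h$. Your write-up just spells out more explicitly the bookkeeping that the paper compresses into a few lines.
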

\begin{proof}
  From $d=hk$ and $b=ha$ it follows that $d=b(k/a)$. 
  The groups in Theorem \ref{thmRoot} can only exist if $\Omega_{n,s}'\ne \emptyset \ne \Omega_{n+e,t}$, and  Lemma \ref{xiny} shows that this holds if and only if $s\equiv (k/a)(n-2i)\bmod \defd$ 
  and $t\equiv k(n+e-2j)\bmod \defd$ for some $i,j\in\{1,\ldots,\defell\}$. Since $sa\equiv t\bmod \defd$, the claim follows from
\[t\equiv k(n-2i)\equiv k(n+e-2j) \bmod \defd.\qedhere\]
\end{proof}

The following corollary concludes this section and proves  Theorem \ref{thmGalprops}c).

\begin{corollary}
 If $p\equiv 5\bmod 6$, then for large enough $e$ the following hold. If $C_{n,e+1}(x+y)$ with $e+\defd < \dep(\S_p(n))$ is the root of a Galois tree as in Theorem \ref{thmRoot}, then $C_{n,e+\defd+1}(x+py)$ 
is the root of a Galois tree.
\end{corollary}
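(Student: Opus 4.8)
The plan is to lift the characterisation in Theorem~\ref{thmRoot} along the multiplication-by-$p$ map, mirroring the orbit-transport argument of Proposition~\ref{propDes5}. Concretely, suppose $C_{n,e+1}(x+y)$ is a Galois-tree root, so by Theorem~\ref{thmRoot} we have $x\in\Omega_{n,s}'$, $y\in\Omega_{n+e,t}\setminus\Omega_{n+e,s}'$ with $\omega^{sa}=\omega^t$, the immediate ancestor $C_{n,e}(x)$ has Galois order $b=ha$, and the non-fixing condition \eqref{eqroot} holds. I want to verify the same four conditions for the candidate $C_{n,e+\defd+1}(x+py)$, whose proposed immediate ancestor is $C_{n,e+\defd}(x)$. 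First I would check the eigenvector data: since multiplication by $p$ sends $\p^m$ to $\p^{m+\defd}$ and commutes with the Galois action, it maps $\Sigma_{n+e,t}$ bijectively onto $\Sigma_{n+e+\defd,t}$ and $\Omega_{n+e,t}$ onto $\Omega_{n+e+\defd,t}$, and likewise for the primed lattices; hence $x\in\Omega_{n,s}'$ is unchanged, $py\in\Omega_{n+e+\defd,t}$, and the relation $\omega^{sa}=\omega^t$ is untouched. The condition $py\notin\Omega'_{n+e+\defd,s}$ follows from $y\notin\Omega'_{n+e,s}$ by the same bijection.

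Next I would establish the Galois order of the new ancestor. I need $\gal(C_{n,e+\defd}(x))=b$. That $b\mid\gal(C_{n,e+\defd}(x))$ is clear because $x\in\Sigma_{n,s}'$ is fixed by $(\omega^{-s},\nu)$ modulo any $\Gamma_m$. The reverse divisibility — that the Galois order does not jump up at the deeper level — is where I would invoke the stabiliser periodicity from \cite[Theorem~5.1]{Die10}: for large $e$ we have ${\rm Stab}_{\U_2}(x+\Gamma_{n+e+\defd})=({\rm Stab}_{\U_2}(x+\Gamma_{n+e}))^{[p]}$ and these stabilisers act trivially on each $\Gamma_i/\Gamma_{i+3\defd}$. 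This should let me transport any $(w,\sigma^r)$ fixing $x$ modulo $\Gamma_{n+e+\defd}$ back to one fixing $x$ modulo $\Gamma_{n+e}$, forcing its Galois part into $\langle\nu\rangle$ of order $b$.

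The crux is the non-fixing condition \eqref{eqroot} at the new level, namely $(\omega^{-s},\nu)[(u,1)(x+py)]\not\equiv(u,1)(x+py)\bmod\Gamma_{n+e+\defd+1}$ for all $u\in\ker(\chi)\cap{\rm Stab}_{\U_2}(x+\Gamma_{n+e+\defd})$. The key computation, exactly as in Proposition~\ref{propDes5}, is that for $u\in{\rm Stab}_{\U_2}(x+\Gamma_{n+e})$ the affine action satisfies $\phi(u\cdot z)=u^p\cdot\phi(z)$ under the bijection $\phi$ given by multiplication by $p$, because $(u^p,1)(x)-x\equiv p((u,1)(x)-x)\bmod\Gamma_{n+e+3\defd}$. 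Using the stabiliser description \eqref{eqstab}, every $u\in{\rm Stab}_{\U_2}(x+\Gamma_{n+e+\defd})$ is a product of $p$-th powers of elements of ${\rm Stab}_{\U_2}(x+\Gamma_{n+e})$, so the failure of \eqref{eqroot} at level $e+\defd$ for some $u$ would, after pulling back through $\phi$, produce a witness $u_0\in\ker(\chi)\cap{\rm Stab}_{\U_2}(x+\Gamma_{n+e})$ violating \eqref{eqroot} at level $e$. Running this equivalence in the contrapositive gives the condition at the deeper level.

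\textbf{The main obstacle} I anticipate is the bookkeeping in this last step: the map $\phi$ intertwines $u$ with $u^p$ rather than with $u$ itself, and $\omega^{-s}$ must be matched correctly through the twist $(\omega^{-s},\nu)$. I would use $\omega^p=\omega$ (so $\omega^{-s}$ is fixed by the $p$-th-power correspondence) together with the triviality of $\theta$ on $\Gamma_1$ to see that $(\omega^{-s},\nu)$ commutes appropriately with $\phi$ up to terms killed in $\Gamma_{n+e+\defd+1}$; the surjectivity of $u\mapsto u^p$ from ${\rm Stab}_{\U_2}(x+\Gamma_{n+e})$ onto ${\rm Stab}_{\U_2}(x+\Gamma_{n+e+\defd})$ furnished by \eqref{eqstab} is what guarantees that no new witness can appear at level $e+\defd$ that was absent at level $e$. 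Care is needed that the error terms live in $\Gamma_{n+e+3\defd}\subseteq\Gamma_{n+e+\defd+1}$, which holds since $\defd\geq 2$, so the congruences survive reduction modulo $\Gamma_{n+e+\defd+1}$.
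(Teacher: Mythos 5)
Your proposal is correct, and it runs on the same engine as the paper's proof --- the multiplication-by-$p$ bijections between the lattices $\Omega'_{i,s}$ and $\Omega'_{i+\defd,s}$, the stabiliser periodicity \eqref{eqstab} from \cite[Theorem~5.1]{Die10}, and the intertwining $\phi(u\cdot z)=u^{p}\cdot\phi(z)$ established in the proof of Proposition~\ref{propDes5} --- but it is wrapped differently. The paper never verifies condition \eqref{eqroot} at the shifted level: it argues by contradiction, assuming $\tilde G=C_{n,e+\defd+1}(x+py)$ is not a root, so that Lemma~\ref{lemID} gives $\tilde G\cong C_{n,e+\defd+1}(x+pz)$ with $z\in\Omega'_{n+e,s}$; Theorem~\ref{isomautpr}b) then produces a unit $u$ carrying $x+pz$ to $x+py$ modulo $\Gamma_{n+e+\defd+1}$, and pulling this relation back through $\phi$ (writing $u=v^{p}$) yields $G\cong C_{n,e+1}(x+z)$ with $x+z\in\Omega'_{n,s}$, whence $b\mid\gal(G)$, contradicting that $G$ is a root. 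You instead verify all four hypotheses of Theorem~\ref{thmRoot} at level $e+\defd$ and invoke its ``if'' direction, transporting \eqref{eqroot} contrapositively; this avoids Lemma~\ref{lemID} altogether and keeps the argument symmetric with the statement being periodified, at the cost of having to re-check the ancillary conditions (lattice memberships and the ancestor's Galois order) as well. Two details you gloss over do work, and should be recorded. First, your pulled-back witness $v$ with $v^{p}=u$ must lie in $\ker(\chi)$: this holds because $\U_2$ is torsion-free, so $u\in\ker(\chi)$ gives $(\tau(v)v^{-1})^{p}=1$ and hence $\tau(v)=v$; without this the pulled-back element need not be a legitimate witness against \eqref{eqroot} at level $e$, whose quantifier ranges over $Q=\ker(\chi)\cap{\rm Stab}_{\U_2}(x+\Gamma_{n+e})$. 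Second, for $\gal(C_{n,e+\defd}(x))=b$ you do not need \cite[Theorem~5.1]{Die10} at all: the divisibility $\gal(C_{n,e+\defd}(x))\mid b$ is immediate from Theorem~\ref{isomautpr}d), since $C_{n,e+\defd}(x)$ is a descendant of $C_{n,e}(x)$ (equivalently, from ${\rm Stab}_{\U\rtimes\G}(x+\Gamma_{n+e+\defd})\subseteq{\rm Stab}_{\U\rtimes\G}(x+\Gamma_{n+e})$), while $b\mid\gal(C_{n,e+\defd}(x))$ follows from the global fixed point exactly as you say.
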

\begin{proof}
  Write $G=C_{n,e+1}(x+y)$ and $\tilde G=C_{n,e+\defd+1}(x+py)$. The parent of $\tilde G$ is $\tilde P=C_{n,e+\defd}(x)$ with Galois order $b$. Suppose, for a contradiction, 
  that $\tilde G$ has Galois order $b$. By Theorem~\ref{lemID} we have 
  $\tilde G \cong C_{n,e+\defd+1}(x+pz)$ for some $z\in \Omega_{n+e,s}'$; recall that multiplication by $p$ induces an isomorphism $\Omega_{i,s}'\to\Omega_{i+\defd,s}'$ for all $i$.  
  Now Theorem~\ref{isomautpr}b) shows that there is $(u,\sigma^j)\in\U\rtimes \G$ with $(u,\sigma^j)(x+pz)\equiv x+py\bmod \Gamma_{n+e+\defd+1}$. Since $(u,\sigma^j)$ stabilises $x+\Gamma_{n+e+\defd}$, 
  we can assume, as in earlier proofs, that $j=0$ and so $u\in{\rm Stab}_{\U_2}(x+\Gamma_{n+e+\defd})$. Thus, $py\equiv (u,1)(x)-x+(u,1)(pz)\bmod \Gamma_{n+e+\defd+1}$, that is, $pz$ 
  and $py$ lie in the same orbit under the affine action defined in \eqref{eqaffine}. The proof of Proposition \ref{propDes5} shows that there is $v\in {\rm Stab}_{\U_2}(x+\Gamma_{n+e})$ 
  with $v^p=u$ such that  $y\equiv (v,1)(x)-x+(v,1)(z)\bmod \Gamma_{n+e+1}$, which implies that $G=C_{n,e+1}(x+y)\cong C_{n,e+1}(x+z)$. Since $x+z\in\Omega_{n,s}'$, 
  we have that $b$ divides $\gal(G)$, a contradiction. Thus, $\tilde G$ is a root of a Galois tree.
\end{proof}

\section{A new periodicity result}
\label{secperiod}\enlargethispage{4ex}
 
\noindent We now consider the structure of the pruned branches in $\T_p$ and proceed to prove our last main result. Recall from Section \ref{intro} that $\dep(\S_p(n))=n-\defc$, and $\B_p(n,n-\defc)\cong \B_p(n+\defd,n-\defc)$ 
for all large enough $n$; in order to describe the sequence of all pruned branches $\B_p(m,m-\defc)$ completely, it remains to specify the growth of the branches, that is,
the additional $d$ layers of groups in $\B_p(n+\defd,n+\defd-\defc)$ that are not covered by the isomorphism $\iota$. One way of doing that is to describe, for each skeleton 
group $G$ at depth $n-\defc$ in $\B_p(n+\defd,n+\defd-\defc)$, the $\defd$-step descendant tree $\mathcal{D}(G,\defd)$. It has been conjectured in \cite[Conjecture~1]{Die10} that for every such $G$ 
there exists a group $H$ at depth $n-\defd-\defc$ in $\B_p(n+\defd,n+\defd-\defc)$ such that $\mathcal{D}(G,\defd)\cong \mathcal{D}(H,\defd)$. Recall that in \cite{Die10} such a group $H$ is called a periodic 
parent of $G$, whereas we refer to such a group as a $\defd$-step twin.

A natural candidate for $H$ seems to be the $\defd$-step ancestor of $G$; indeed, it is shown in \cite[Theorem~1.2]{Die10} 
that if $p\equiv 5\bmod 6$ and if the $\defd$-step ancestor of $G$ has Galois order~$1$, then it is also a $\defd$-step twin. However, it is suggested in \cite[Remark 4]{Die10} and proved 
in \cite[Section 6.2.7]{saha} that the $\defd$-step ancestor is infinitely many times not a $\defd$-step twin, so it is still an open problem to describe the growth of the pruned branches.  
A crucial ingredient in the proof of \cite[Theorem 1.2]{Die10} is that if a skeleton group has Galois order~$1$, then the same holds 
for all its skeleton group descendants. In fact, \cite[Theorem~1.3]{Die10} considers a slight generalisation: informally, it proves that if a group $G$ (as above)  has distance at most 
$d$ from a certain path of groups with constant Galois order, then $G$ has a $d$-step twin. These results (together with \cite{DEi17}) highlight that the Galois order of skeleton groups 
seems to play a fundamental role for proving periodicity results concerning the growth of branches. The aim of this section is to exploit our new results on Galois trees to prove the 
following theorem, which yields a new periodicity result generalising \cite[Theorem~1.3]{Die10}; this proves \cite[Conjecture 1]{Die10} for $p\equiv 5\bmod 6$ and implies 
Theorem~\ref{thm_newperiod_short}.

\begin{theorem}\label{thm_newperiod}
Let $p \geq 7$ with $p\equiv 5\bmod 6$ and let $r$ be the total number of prime divisors of $\defd$. Then there exists $n_0=n_0(p)$ such that for all $n \geq n_0$ the following holds: For every skeleton group 
$G$ at depth $n-\defc$ in $\B_p(n+\defd)$ there is $m \leq pr - \defd$ such that the $m$-step ancestor $H$ of $G$ and the $\defd$-step ancestor $K$ of $H$ are $(m+\defd)$-step twins with the same Galois order; 
in particular, $G$ has a $\defd$-step twin at depth $n-\defd-\defc$ in~$\B_p(n+\defd)$.

\begin{figure}[ht!]
\begin{center}\includegraphics[scale=0.3]{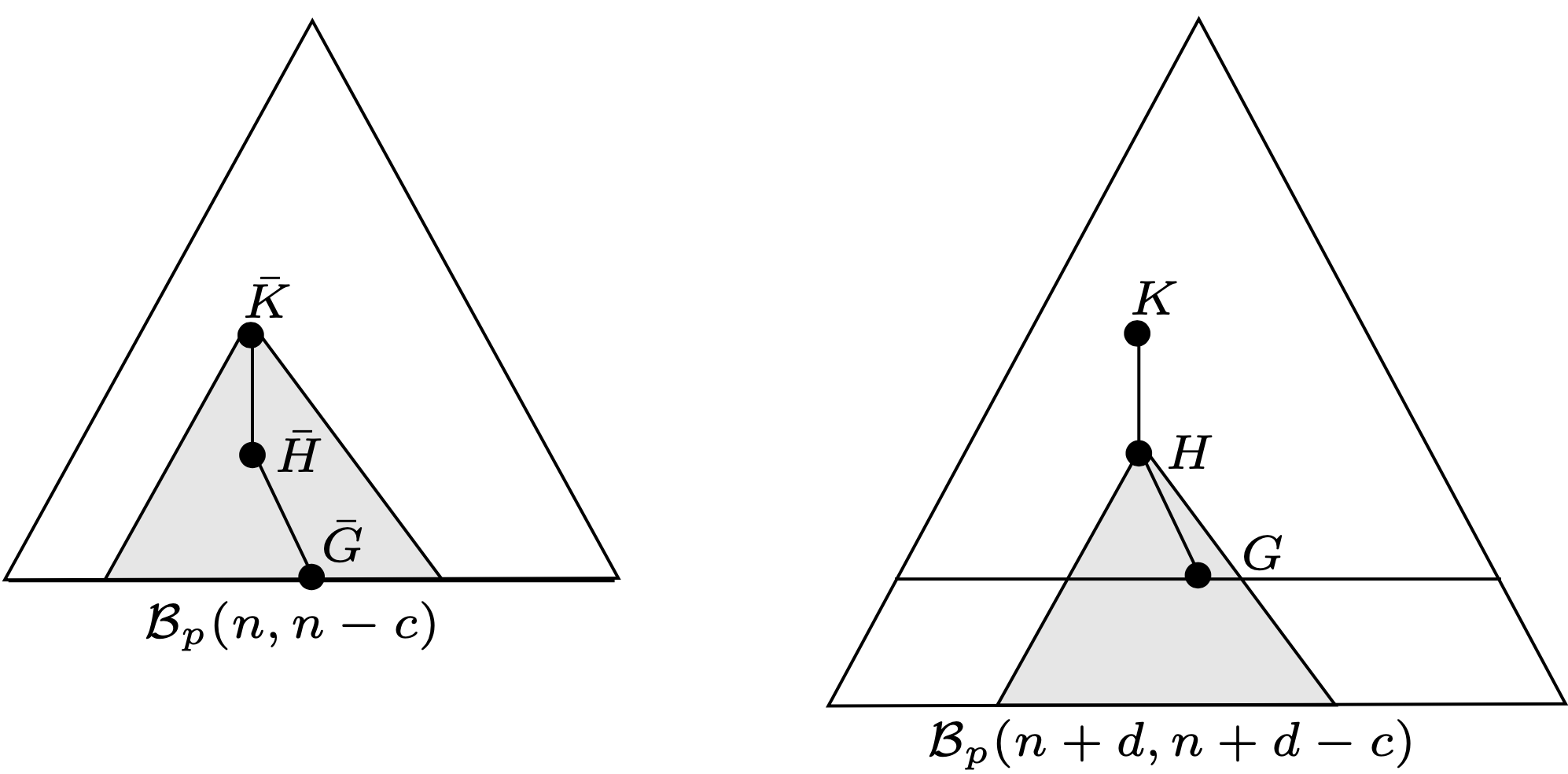}
\end{center}
  \caption{The groups $G,H,K$ and 
$\bar G,\bar H,\bar K$ correspond under the isomorphism $\B_p(n,n-\defc)\cong \B_p(n+\defd,n-\defc)$; 
the group $H$ is a $m$-step ancestor of $G$ for a certain $m\leq pr-\defd$, and $K$ is a $\defd$-step ancestor
of $H$. The $(m+\defd)$-step descendant trees of $K$, $H$ and $\bar K$ are isomorphic, which defines 
a twin for $G$.}\label{fig_newperiod} 
\end{figure}
\end{theorem}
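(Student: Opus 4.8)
The plan is to deduce the theorem from a single \emph{vertical} periodicity inside the branch $\B_p(n+\defd)$: I will produce the ancestors $H$ and $K$ and show directly that $\DD(H,m+\defd)\cong\DD(K,m+\defd)$, and then read off the twin of $G$ by restriction, using the first periodicity $\iota\colon\B_p(n+\defd,n-\defc)\to\B_p(n,n-\defc)$ to relocate it into $\B_p(n)$. The observation that makes the skeleton machinery available is that, although $G$ lies at depth $n-\defc=\dep(\S_p(n))$, the ambient skeleton $\S_p(n+\defd)$ has depth $n+\defd-\defc$; hence every \emph{capable} group occurring in $\DD(H,m+\defd)$ or $\DD(K,m+\defd)$ still belongs to $\S_p(n+\defd)$, and all of Sections~\ref{localglobal}--\ref{galroots} apply with parameter $n+\defd$ in place of $n$.

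First I would locate $H$. Let $G=G_0,G_1,\dots$ be the chain of ancestors of $G$, so $G_i$ has depth $n-\defc-i$ and is a skeleton group. By Theorem~\ref{isomautpr}d) the Galois orders $\gal(G_i)$ are non-decreasing in $i$ and divide $\defd$, so they increase strictly at most $r$ times. I seek the least $m$ with $\gal(G_m)=\gal(G_{m+\defd})$, that is, such that $H:=G_m$ and $K:=G_{m+\defd}$ lie in a common Galois tree $\R$. A pigeonhole argument over the at most $r$ strict increases, refined by the period-$\defd$ occurrence of Galois-tree roots (Theorem~\ref{thmGalprops}c)), shows that this first index satisfies $m\leq pr-\defd$. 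By Theorem~\ref{thmGFP} I may write $H=C_{n+\defd,e}(x)$ with $x\in\Omega_{n+\defd,t}$, and then $K=C_{n+\defd,e-\defd}(x)$ for the \emph{same} global Galois fixed point $x$, with $\gal(H)=\gal(K)=h$.

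Next I would construct the isomorphism on the capable level. For $e$ large, Proposition~\ref{propDes5} provides a depth-shift bijection $C_{n+\defd,e'}(x+y)\mapsto C_{n+\defd,e'+\defd}(x+py)$, induced by multiplication by $p$ on $\mathcal{Z}(n+\defd,e',t)$, between the immediate descendants lying in $\R$; the last corollary of Section~\ref{galroots} shows that the same rule sends a root of a sub-Galois-tree of strictly smaller order to a root of a sub-Galois-tree. Recursing over all Galois orders occurring below $K$ assembles a depth-$\defd$ shift isomorphism between the full \emph{skeleton} descendant subtrees of $K$ and of $H$. Composing with the first periodicity, which identifies $\DD(K,m+\defd)$ (all of whose vertices lie at depth $\le n-\defc$) with $\DD(\bar K,m+\defd)$ for $\bar K=\iota(K)\in\B_p(n)$, realises the three-way comparison of $K$, $H$ and $\bar K$ in Figure~\ref{fig_newperiod}. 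The isomorphism matches the $m$-step descendant $G$ of $H$ with an $m$-step descendant $G^\ast$ of $K$ at depth $n-\defd-\defc$; restricting it to the subtree of depth $\defd$ below $G$ yields $\DD(G,\defd)\cong\DD(G^\ast,\defd)$, and one further application of $\iota$ moves this twin into $\B_p(n)$.

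The hard part is the step suppressed above: upgrading the skeleton isomorphism to an isomorphism of the \emph{full} descendant trees in $\B_p(n+\defd)$. Every vertex of $\DD(K,m+\defd)$ has depth $\le n-\defc$ and is thus governed by $\iota$, but $\DD(H,m+\defd)$ descends to depth $n+\defd-\defc$, so its $\defd$ lowest layers contain non-capable (terminal) descendants that the first periodicity does not see---precisely the ``additional difficulties'' flagged before Proposition~\ref{propDes5}. To handle them I would extend the affine-orbit description of Lemma~\ref{lemID} from the descendants inside one Galois tree to the full set of immediate descendants of a skeleton group, verify that multiplication by $p$ remains equivariant for this enlarged action (the identity $\phi(u\cdot z)=u^{p}\cdot\phi(z)$ from the proof of Proposition~\ref{propDes5} never used that $z$ lay in the $t$-eigenspace), and then show that capability is preserved by the shift. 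The last point is where \cite[Theorem~5.1]{Die10} is indispensable: the $[p]$-periodicity of the stabilisers ${\rm Stab}_{\U_2}(x+\Gamma_\bullet)$ established there forces the non-capable fringes of $C_{n+\defd,e'}(x+y)$ and $C_{n+\defd,e'+\defd}(x+py)$ to agree in number and isomorphism type, so the skeleton isomorphism does extend to the full trees. Assembling these pieces shows that $H$ and $K$ are $(m+\defd)$-step twins of equal Galois order, whence the asserted $\defd$-step twin for $G$.
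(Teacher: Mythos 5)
Your outline tracks the paper's proof closely in its first two stages: the pigeonhole on the at most $r$ jumps of the Galois order along the ancestor path to locate $H$ and $K$, and the use of Theorem \ref{thmGFP} to define both by a single global fixed point $x$, so that the whole canonical path $C_{n+\defd,i}(x)$ through $K$ and $H$ has constant Galois order and the $\defd$-step twin of $G$ can be read off from an isomorphism $\DD(H,m+\defd)\cong\DD(K,m+\defd)$. The divergence is in how that isomorphism is obtained, and there your argument has a genuine gap --- in fact two.

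First, the skeleton-level recursion does not go through as described. Proposition \ref{propDes5} compares descendants of $C_{n+\defd,e'}(x)$ and $C_{n+\defd,e'+\defd}(x)$ for the \emph{same} parameter $x$, and the corollary closing Section \ref{galroots} sends a root $C_{n+\defd,e'+1}(x+y)$ to the root $C_{n+\defd,e'+\defd+1}(x+py)$. To recurse below these roots you must compare the descendant trees of $C_{n+\defd,e'+1}(x+y)$ and $C_{n+\defd,e'+\defd+1}(x+py)$ --- different depths \emph{and} different parameters --- and neither result covers this: the affine orbits below the two roots are taken under stabilisers of $x+y$ and of $x+py$ respectively, and matching them is a new statement you would have to prove. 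Second, and more fundamentally, the terminal descendants cannot be captured by ``extending the affine-orbit description of Lemma \ref{lemID}''. That description parametrises constructible groups $C_{n+\defd,e'+1}(x+z)$ by skeleton parameters; non-capable immediate descendants of a skeleton group are not constructible (every constructible group of depth at most $\dep(\S_p(n+\defd))$ lies in the skeleton, hence is capable), so no element of $\Gamma_{n+\defd+e'}/\Gamma_{n+\defd+e'+1}$ represents them --- they arise from $\Aut$-orbits on the $p$-multiplicator. Passing from the stabiliser periodicity \eqref{eqstab} to ``the fringes agree in number and isomorphism type'' is exactly the content of \cite[Theorem 1.3]{Die10}, and this is how the paper closes its proof: it observes that the argument of \cite[Theorem 1.3]{Die10}, which compares \emph{full} descendant trees in the branch along a path of constant Galois order, works for $m+\defd\leq pr$ once \eqref{eqstab} is extended from $3\defd$ to $3pr$ by enlarging $e_0(p)$. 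Your sketch asserts this conclusion rather than proving it. (A minor point: the detour through $\iota$ into $\B_p(n)$ is not needed for Theorem \ref{thm_newperiod}, which places the twin inside $\B_p(n+\defd)$; it is only relevant for Theorem \ref{thm_newperiod_short}.)
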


\begin{proof}
 First we show that $m$ (as in the statement of the theorem) exists, and demonstrate that the last claim of the theorem follows from the first. Let $G$ be a skeleton group at depth $n-\defc$ in $\mathcal{B}_p(n+\defd,n+\defd-\defc)$. Since the Galois order of groups in $\B_p(n)$ can change at most $r$ times, it follows that any path of length $pr$ in $\B_p(n)$ has a 
subpath of length $\defd$ whose groups have the same Galois order. This proves that there exists $m\leq pr-\defd$ such that the $m$-step ancestor $H$ and $(m+\defd)$-step ancestor $K$ of $G$ have 
the same Galois order. We argue  below that $\mathcal{D}(H,m+\defd)\cong \mathcal{D}(K,m+\defd)$; if $P$ is the group in $\mathcal{D}(K,m+\defd)$ that corresponds to $G$ under this isomorphism 
of descendant trees, then $P$ is a $m$-step descendant of $K$ that is a $\defd$-step twin of $G$, that is, $\mathcal{D}(G,\defd)\cong \mathcal{D}(P,\defd)$. In particular, this proves that every skeleton 
group at depth $n-\defc$ in $\mathcal{B}_p(n+\defd,n+\defd-\defc)$ has a $\defd$-step twin, as claimed.
   
It remains to show that $\mathcal{D}(H,m+\defd)\cong \mathcal{D}(K,m+\defd)$. By assumption, $H$ and $K$ have the same Galois order, say $h$, and by Theorem \ref{thmGFP}, we can assume 
that $H=C_{n+\defd,n-\defc-m}(f)$ for some $f\in\Omega_{n+\defd,y}$ such that $(1,\tau)(f)=\omega^y f$ for $\tau=\sigma^{\defd/h}$. To simplify the notation, in the following we  write $\hatn=n+\defd$ and $e=n-\defc-m$, and we define $K_i=C_{\hatn,i}(f)$. Note that $S_\hatn\to K_1\to \ldots \to K_{\hatn-\defc}$ is a maximal path in $\B_p(\hatn,\hatn-\defc)$ 
with $K=K_{e-\defd}$ and $H=K_e$. With this notation, it remains to show that \[\mathcal{D}(K_e,m+\defd)\cong \mathcal{D}(K_{e-\defd},m+\defd),\] and our assumption is that $K_{e-\defd},K_{e-\defd+1},\ldots, K_{e+m+\defd}$ 
all have the same Galois order $h$. This statement is proved in  \cite[Theorem 1.3]{Die10} for $m+\defd\leq 2\defd$, but the same proof works for $m+\defd\leq pr$: the crucial ingredient is \eqref{eqstab} which is formulated in \cite{Die10} for $3d$, but the proof of \eqref{eqstab} shows that the same result holds for $3pr$ if one increases the bound $e_0=e_0(p)$.
\end{proof}

\clearpage
\appendix
\section{Examples}
\label{examples}

\noindent We exhibit a few examples of skeletons which have been determined using the ANUPQ
package \cite{anupq} for GAP \cite{GAP4}. We use a compact form to draw trees: 
Each vertex is labeled with the Galois order of the corresponding group, and a number 
$m$ on the right of a vertex indicates that this vertex and its full descendant tree 
appears $m$ times in the graph. 

\begin{figure}[ht!]
\begin{center}
\hfill
\begin{tikzpicture}[xscale=1,yscale=0.75]
    \foreach \pos / \name / \typ / \gal in 
    {{(0,0)/v11/gvert/5}, 
    {(0,-1)/v21/gvert/5}, {(1,-1)/v22/gvert/5}, 
    {(0,-2)/v31/gvert/5}, {(1,-2)/v32/gvert/5},
    {(0,-3)/v41/ivert/}} 
    \node[\typ] (\name) at \pos {\tiny \gal};
             
    \node[right=-0.075cm of v22] {\tiny 10};
    \node[right=-0.075cm of v31] {\tiny 11};
    
    \foreach \source / \dest in 
    {v11/v21, v11/v22,
     v21/v31, v22/v32} 
    {
        \path[edge] (\source) -- node[] {} (\dest);
    }
\end{tikzpicture}
\hfill
\begin{tikzpicture}[xscale=1,yscale=0.75]
    \foreach \pos / \name / \typ / \gal in 
    {{(0,0)/v11/gvert/1}, 
    {(0,-1)/v21/gvert/1}, {(2,-1)/v22/gvert/1}, {(3,-1)/v23/gvert/1}, 
    {(0,-2)/v31/gvert/1}, {(1,-2)/v32/gvert/1}, {(2,-2)/v33/gvert/1}, {(3,-2)/v34/gvert/1}, 
    {(0,-3)/v41/gvert/1}, {(1,-3)/v42/gvert/1}, {(2,-3)/v43/gvert/1}, {(3,-3)/v44/gvert/1}}
    \node[\typ] (\name) at \pos {\tiny \gal};
             
    \node[right=-0.075cm of v21] {\tiny 7};
    \node[right=-0.075cm of v22] {\tiny 5};
    \node[right=-0.075cm of v31] {\tiny 6};
    \node[right=-0.075cm of v32] {\tiny 7};
    \node[right=-0.075cm of v33] {\tiny 7};
    \node[right=-0.075cm of v34] {\tiny 7};
    \node[right=-0.075cm of v41] {\tiny 7};
    \node[right=-0.075cm of v43] {\tiny 7};
    \node[right=-0.075cm of v44] {\tiny 49};

    \foreach \source / \dest in 
    {v11/v21, v11/v22, v11/v23,
     v21/v31, v21/v32, v22/v33, v23/v34,
     v31/v41, v32/v42, v33/v43, v34/v44}
    {
        \path[edge] (\source) -- node[] {} (\dest);
    }
\end{tikzpicture}
\hfill\hfill\hfill
\end{center}
\caption{A Galois tree in $\S_{11}(20)$ (left) and a Galois tree in $\S_7(11)$ (right)}
\label{fignondistreg}
\end{figure}

\begin{figure}[ht!] 
\begin{center}
\begin{tikzpicture}[xscale=1,yscale=0.75]

    \foreach \pos / \name / \gal / \mult in
    {{(1,-1)/v-i1-j1/36/1}, 
     {(1,-2)/v-i2-j1/6/1}, {(9,-2)/v-i2-j2/6/1}, 
     {(1,-3)/v-i3-j1/6/1}, {(9,-3)/v-i3-j2/6/1}, 
     {(1,-4)/v-i4-j1/6/6}, {(5,-4)/v-i4-j2/6/1}, {(9,-4)/v-i4-j3/6/1}, 
     {(1,-5)/v-i5-j1/6/1}, {(5,-5)/v-i5-j2/6/1}, {(8,-5)/v-i5-j3/3/3}, {(9,-5)/v-i5-j4/6/1}, {(12,-5)/v-i5-j5/3/3}, 
     {(1,-6)/v-i6-j1/6/1}, {(5,-6)/v-i6-j2/6/1}, {(8,-6)/v-i6-j3/3/1}, {(9,-6)/v-i6-j4/6/7}, {(12,-6)/v-i6-j5/3/7}, 
     {(1,-7)/v-i7-j1/6/1}, {(4,-7)/v-i7-j2/3/3}, {(5,-7)/v-i7-j3/6/1}, {(8,-7)/v-i7-j4/3/1}, {(9,-7)/v-i7-j5/6/1}, {(12,-7)/v-i7-j6/3/1}, 
     {(1,-8)/v-i8-j1/6/1}, {(4,-8)/v-i8-j2/3/1}, {(5,-8)/v-i8-j3/6/1}, {(8,-8)/v-i8-j4/3/1}, {(9,-8)/v-i8-j5/6/1}, {(12,-8)/v-i8-j6/3/1}, 
     {(1,-9)/v-i9-j1/6/1}, {(4,-9)/v-i9-j2/3/1}, {(5,-9)/v-i9-j3/6/1}, {(8,-9)/v-i9-j4/3/1}, {(9,-9)/v-i9-j5/6/1}, {(12,-9)/v-i9-j6/3/1}, 
     {(1,-10)/v-i10-j1/6/7}, {(4,-10)/v-i10-j2/3/7}, {(5,-10)/v-i10-j3/6/7}, {(8,-10)/v-i10-j4/3/7}, {(9,-10)/v-i10-j5/6/1}, {(12,-10)/v-i10-j6/3/1}, 
     {(1,-11)/v-i11-j1/6/1}, {(4,-11)/v-i11-j2/3/1}, {(5,-11)/v-i11-j3/6/1}, {(7,-11)/v-i11-j4/3/3}, {(8,-11)/v-i11-j5/3/7}, {(9,-11)/v-i11-j6/6/1}, {(11,-11)/v-i11-j7/3/3}, {(12,-11)/v-i11-j8/3/7}, 
     {(1,-12)/v-i12-j1/6/1}, {(4,-12)/v-i12-j2/3/1}, {(5,-12)/v-i12-j3/6/1}, {(7,-12)/v-i12-j4/3/1}, {(8,-12)/v-i12-j5/3/1}, {(9,-12)/v-i12-j6/6/7}, {(11,-12)/v-i12-j7/3/7}, {(12,-12)/v-i12-j8/3/7}, 
     {(1,-13)/v-i13-j1/6/1}, {(3,-13)/v-i13-j2/3/3}, {(4,-13)/v-i13-j3/3/7}, {(5,-13)/v-i13-j4/6/1}, {(7,-13)/v-i13-j5/3/1}, {(8,-13)/v-i13-j6/3/1}, {(9,-13)/v-i13-j7/6/1}, {(11,-13)/v-i13-j8/3/1}, {(12,-13)/v-i13-j9/3/1}, 
     {(1,-14)/v-i14-j1/6/1}, {(3,-14)/v-i14-j2/3/1}, {(4,-14)/v-i14-j3/3/1}, {(5,-14)/v-i14-j4/6/1}, {(7,-14)/v-i14-j5/3/1}, {(8,-14)/v-i14-j6/3/1}, {(9,-14)/v-i14-j7/6/1}, {(11,-14)/v-i14-j8/3/1}, {(12,-14)/v-i14-j9/3/1}, 
     {(1,-15)/v-i15-j1/6/1}, {(3,-15)/v-i15-j2/3/1}, {(4,-15)/v-i15-j3/3/1}, {(5,-15)/v-i15-j4/6/1}, {(7,-15)/v-i15-j5/3/1}, {(8,-15)/v-i15-j6/3/1}, {(9,-15)/v-i15-j7/6/1}, {(11,-15)/v-i15-j8/3/1}, {(12,-15)/v-i15-j9/3/1}, 
     {(1,-16)/v-i16-j1/6/7}, {(3,-16)/v-i16-j2/3/7}, {(4,-16)/v-i16-j3/3/7}, {(5,-16)/v-i16-j4/6/7}, {(7,-16)/v-i16-j5/3/7}, {(8,-16)/v-i16-j6/3/7}, {(9,-16)/v-i16-j7/6/1}, {(11,-16)/v-i16-j8/3/1}, {(12,-16)/v-i16-j9/3/1}, 
     {(1,-17)/v-i17-j1/6/1}, {(3,-17)/v-i17-j2/3/1}, {(4,-17)/v-i17-j3/3/1}, {(5,-17)/v-i17-j4/6/1}, {(6,-17)/v-i17-j5/3/3}, {(7,-17)/v-i17-j6/3/7}, {(8,-17)/v-i17-j7/3/7}, {(9,-17)/v-i17-j8/6/1}, {(10,-17)/v-i17-j9/3/3}, {(11,-17)/v-i17-j10/3/7}, {(12,-17)/v-i17-j11/3/7}, 
     {(1,-18)/v-i18-j1/6/1}, {(3,-18)/v-i18-j2/3/1}, {(4,-18)/v-i18-j3/3/1}, {(5,-18)/v-i18-j4/6/1}, {(6,-18)/v-i18-j5/3/1}, {(7,-18)/v-i18-j6/3/1}, {(8,-18)/v-i18-j7/3/1}, {(9,-18)/v-i18-j8/6/7}, {(10,-18)/v-i18-j9/3/7}, {(11,-18)/v-i18-j10/3/7}, {(12,-18)/v-i18-j11/3/7}, 
     {(1,-19)/v-i19-j1/6/1}, {(2,-19)/v-i19-j2/3/3}, {(3,-19)/v-i19-j3/3/7}, {(4,-19)/v-i19-j4/3/7}, {(5,-19)/v-i19-j5/6/1}, {(6,-19)/v-i19-j6/3/1}, {(7,-19)/v-i19-j7/3/1}, {(8,-19)/v-i19-j8/3/1}, {(9,-19)/v-i19-j9/6/1}, {(10,-19)/v-i19-j10/3/1}, {(11,-19)/v-i19-j11/3/1}, {(12,-19)/v-i19-j12/3/1}}
    {
    \ifthenelse{\mult > 1}
        { \node[gvert] (\name) at \pos {\tiny$\gal$}; \node[right=-0.075cm of \name] {\tiny$\mult$}; }
        { \node[gvert] (\name) at \pos {\tiny$\gal$}; }
    }

    \foreach \source / \dest in
    {v-i1-j1/v-i2-j1, v-i1-j1/v-i2-j2, 
     v-i2-j1/v-i3-j1, v-i2-j2/v-i3-j2, 
     v-i3-j1/v-i4-j1, v-i3-j1/v-i4-j2, v-i3-j2/v-i4-j3, 
     v-i4-j1/v-i5-j1, v-i4-j2/v-i5-j2, v-i4-j2/v-i5-j3, v-i4-j3/v-i5-j4, v-i4-j3/v-i5-j5, 
     v-i5-j1/v-i6-j1, v-i5-j2/v-i6-j2, v-i5-j3/v-i6-j3, v-i5-j4/v-i6-j4, v-i5-j5/v-i6-j5, 
     v-i6-j1/v-i7-j1, v-i6-j1/v-i7-j2, v-i6-j2/v-i7-j3, v-i6-j3/v-i7-j4, v-i6-j4/v-i7-j5, v-i6-j5/v-i7-j6, 
     v-i7-j1/v-i8-j1, v-i7-j2/v-i8-j2, v-i7-j3/v-i8-j3, v-i7-j4/v-i8-j4, v-i7-j5/v-i8-j5, v-i7-j6/v-i8-j6, 
     v-i8-j1/v-i9-j1, v-i8-j2/v-i9-j2, v-i8-j3/v-i9-j3, v-i8-j4/v-i9-j4, v-i8-j5/v-i9-j5, v-i8-j6/v-i9-j6, 
     v-i9-j1/v-i10-j1, v-i9-j2/v-i10-j2, v-i9-j3/v-i10-j3, v-i9-j4/v-i10-j4, v-i9-j5/v-i10-j5, v-i9-j6/v-i10-j6, 
     v-i10-j1/v-i11-j1, v-i10-j2/v-i11-j2, v-i10-j3/v-i11-j3, v-i10-j3/v-i11-j4, v-i10-j4/v-i11-j5, v-i10-j5/v-i11-j6, v-i10-j5/v-i11-j7, v-i10-j6/v-i11-j8, 
     v-i11-j1/v-i12-j1, v-i11-j2/v-i12-j2, v-i11-j3/v-i12-j3, v-i11-j4/v-i12-j4, v-i11-j5/v-i12-j5, v-i11-j6/v-i12-j6, v-i11-j7/v-i12-j7, v-i11-j8/v-i12-j8, 
     v-i12-j1/v-i13-j1, v-i12-j1/v-i13-j2, v-i12-j2/v-i13-j3, v-i12-j3/v-i13-j4, v-i12-j4/v-i13-j5, v-i12-j5/v-i13-j6, v-i12-j6/v-i13-j7, v-i12-j7/v-i13-j8, v-i12-j8/v-i13-j9, 
     v-i13-j1/v-i14-j1, v-i13-j2/v-i14-j2, v-i13-j3/v-i14-j3, v-i13-j4/v-i14-j4, v-i13-j5/v-i14-j5, v-i13-j6/v-i14-j6, v-i13-j7/v-i14-j7, v-i13-j8/v-i14-j8, v-i13-j9/v-i14-j9, 
     v-i14-j1/v-i15-j1, v-i14-j2/v-i15-j2, v-i14-j3/v-i15-j3, v-i14-j4/v-i15-j4, v-i14-j5/v-i15-j5, v-i14-j6/v-i15-j6, v-i14-j7/v-i15-j7, v-i14-j8/v-i15-j8, v-i14-j9/v-i15-j9, 
     v-i15-j1/v-i16-j1, v-i15-j2/v-i16-j2, v-i15-j3/v-i16-j3, v-i15-j4/v-i16-j4, v-i15-j5/v-i16-j5, v-i15-j6/v-i16-j6, v-i15-j7/v-i16-j7, v-i15-j8/v-i16-j8, v-i15-j9/v-i16-j9, 
     v-i16-j1/v-i17-j1, v-i16-j2/v-i17-j2, v-i16-j3/v-i17-j3, v-i16-j4/v-i17-j4, v-i16-j4/v-i17-j5, v-i16-j5/v-i17-j6, v-i16-j6/v-i17-j7, v-i16-j7/v-i17-j8, v-i16-j7/v-i17-j9, v-i16-j8/v-i17-j10, v-i16-j9/v-i17-j11, 
     v-i17-j1/v-i18-j1, v-i17-j2/v-i18-j2, v-i17-j3/v-i18-j3, v-i17-j4/v-i18-j4, v-i17-j5/v-i18-j5, v-i17-j6/v-i18-j6, v-i17-j7/v-i18-j7, v-i17-j8/v-i18-j8, v-i17-j9/v-i18-j9, v-i17-j10/v-i18-j10, v-i17-j11/v-i18-j11, 
     v-i18-j1/v-i19-j1, v-i18-j1/v-i19-j2, v-i18-j2/v-i19-j3, v-i18-j3/v-i19-j4, v-i18-j4/v-i19-j5, v-i18-j5/v-i19-j6, v-i18-j6/v-i19-j7, v-i18-j7/v-i19-j8, v-i18-j8/v-i19-j9, v-i18-j9/v-i19-j10, v-i18-j10/v-i19-j11, v-i18-j11/v-i19-j12}
    \path[edge] (\source) -- (\dest);
\end{tikzpicture}
\end{center}
\caption{Galois trees with Galois orders $3$ and $6$ in $\S_{7}(24)$}
\label{figs724}
\end{figure}

\section{Notation} \label{glossary}

{\small
\begin{minipage}[t]{7.5cm}
\begin{longtable}{@{} lp{6cm}}
    $p$ & a prime $\geq 7$ \\
    $\defd$ & $p-1$ \\
    $\defc$ & $2p-8$ \\
    $\defell$ & $(p-3)/2$ \\
    $h, k$ & fixed positive integers with $hk = d$ \\
    $\gal(G)$ & the Galois order of $G$ \\
    $\newG_p$ & the graph associated with the finite $p$-groups of maximal class \\
    $\mathcal{T}_p$ & the maximal infinite tree in $\newG_p$ \\
    $S_p(n)$ & the group of order $p^n$ on the unique maximal infinite path in $\mathcal{T}_p$ \\
    $\B_p(n)$ & the $n$-th branch of $\T_p$ \\
    $\B_p(n,m)$ & the subtree of $\B_p(n)$ consisting of all groups of depth at most $m$\\
    $\mathcal{D}(G)$ & the descendant tree of $G$ in $\B_p(n)$ \\
    $\mathcal{D}(G,m)$ & the $m$-step descendant tree of $G$ in $\B_p(n)$ \\
    $\S_p(n)$ & the full subtree of $\B_p(n)$ consisting of all capable groups of depth at most $n-\defc$ 
\end{longtable}
\end{minipage}\hspace*{0.8cm}\begin{minipage}[t]{7.5cm}\begin{longtable}{@{} lp{6cm}}
     $\omega$ & a primitive $d$-th root of unity in $\Z_p$ \\
     $\theta$ & a primitive $p$-th root of unity in $\Q_p$ \\
      $\K$ & $\Q_p(\theta)$ \\
    $\O$ & $\Z_p[\theta]$\\
    $\p$ & the ideal in $\O$ generated by $\theta - 1$ \\
    $\U$ & the unit group of $\O$ \\
    $\U_j$ & $1 + \p^j$ for $j \ge 2$ \\
    $\sigma_i$ & the automorphism of $\K$ defined by $\theta \mapsto \theta^i$ \\
    $\G$ & the Galois group of $\K/\Q_p$ \\
    $\sigma$ & a fixed generator of $\G$ \\
    $\tau$ & $\sigma^k$ with $k$ fixed as above \\
    $\chi$ & $\U \to \U, u \mapsto u \tau(u)^{-1}$ \\
    $\Hom$ & $\Hom_{\theta}(\O \wedge \O, \O)$ \\
    $\Gamma_n$ & $(\p^{n-1})^\ell B^{-1}$ \\
    $\Delta_n$ & $\Gamma_n \setminus \Gamma_{n+1}$ \\
    $C_{n,e}(x)$ & the skeleton group defined by $x \in \K^\ell$ \\
    $\Sigma_{n,i}$ & $ \{ x \in \Gamma_n : (1, \tau)(x) = \omega^i x \}$ \\
    $\Omega_{n,i}$ & $\Sigma_{n,i} \setminus \Sigma_{n+1,i}$ 
\end{longtable}
\end{minipage}}

\section*{Funding}
\noindent This work was supported by the German Research Foundation  [DFG project 386837064 to A.\ C.\ and T.~M.]; and the Australian Research Council [DP190100317 to H.\ D.].

\enlargethispage{1ex}

\bibliographystyle{abbrv}

\def\cprime{$'$} \def\cprime{$'$}

\end{document}